\newtheorem{thm}{Theorem}[section]
\newtheorem{cor}[thm]{Corollary}
\newtheorem{lem}[thm]{Lemma}
\newtheorem{exm}[thm]{Example}
\newtheorem{prop}[thm]{Proposition}
\theoremstyle{definition}
\theoremstyle{remark}
\newtheorem{rem}[thm]{\bf Remark}
\numberwithin{equation}{section}
\begin{document}
\title[Noncommutative Kn\"{o}rrer  periodicity via equivariantization]{Noncommutative Kn\"{o}rrer periodicity via equivariantization}
\author[Chen, Wu] {Xiao-Wu Chen$^*$, Wenchao Wu}

\makeatletter
\@namedef{subjclassname@2020}{\textup{2020} Mathematics Subject Classification}
\makeatother

\thanks{$^*$ The corresponding author}
\date{\today}
\subjclass[2020]{16E35, 18G80, 18G05, 16E65, 17C70}

\thanks{xwchen$\symbol{64}$mail.ustc.edu.cn, wuwch20$\symbol{64}$mail.ustc.edu.cn}
\keywords{matrix factorization, projective-module factorization, stable category,  equivariantization, super module}

\begin{abstract}
We establish  noncommutative Kn\"{o}rrer periodicity for projective-module factorizations over an arbitrary ring, using the equivariantization theory with respect to various actions by a cyclic group of order two. We obtain an explicit quasi-inverse of the periodicity. We  compare the periodicity with a certain tensor functor between big singularity categories.
\end{abstract}

\maketitle

\dedicatory{}%
\commby{}%

\section{Introduction}

Kn\"{o}rrer's periodicity theorem \cite{Kno} plays a central role in the classification of simple hypersurface singularities \cite{BGS, Sol}. It relates the stable categories of \emph{matrix factorizations} \cite{Eis} over rings with different  Krull dimension.  Kn\"{o}rrer periodicity is related to Bott periodicity  \cite{Br} in topological K-theory, and represents  a certain  quantum symmetry in Landau-Ginzburg models \cite{HHP}.

There are geometric versions of Kn\"{o}rrer's periodicity theorem in \cite{Orl, Shi}. We are interested in
noncommutative  Kn\"{o}rrer periodicity. The concept of matrix factorizations over noncommutative rings is due to \cite{CCKM, MU21}. Then Kn\"{o}rrer's periodicity theorem for noncommutative rings is established in \cite{CKMW, MU}.

We mention that $C_2$-actions on rings and $\mathbb{Z}_2$-graded rings appear naturally in the study of matrix factorizations \cite{Kno, Sol}. Here and later, $C_2$ will denote a cyclic group of order two, and $\mathbb{Z}_2$-graded rings will be called super rings. Therefore, it is natural to investigate the categories of  matrix factorizations using the equivariantization theory \cite{Del} with respect to various $C_2$-actions; compare \cite{SY}.

The goal of this paper is two-fold. The first is to  give a more conceptual proof of noncommutative  Kn\"{o}rrer periodicity, using $C_2$-equivariantization. The second is to remove the usual assumptions on the ring \cite{Kno, CKMW, MU} such as the regularity, noetherianness, completeness or gradedness. To do so, we consider \emph{projective-module factorizations} \cite{Chen24}, which are infinite analogues \cite{Dyck, Bird} of matrix factorizations.

Let $A$ be an arbitrary ring with unit. Fix an element $\omega\in A$ and an automorphism $\sigma$ on $A$ such that
$$\sigma(\omega)=\omega, \mbox{ and } \omega a=\sigma(a) \omega  \mbox{ for all } a\in A.$$
For convenience, such a triple $(A, \omega, \sigma)$ will be called an \emph{nc-triple}. The element $\omega$ plays the role of potentials in  Landau-Ginzburg models \cite{Orl, HHP}.

Using the nc-triple $(A, \omega, \sigma)$ above, the category ${\mathbf{PF}}(A; \omega)$ of projective-module factorizations of $\omega$ over $A$ is defined \cite{Chen24}. Its stable category $\underline{\mathbf{PF}}(A; \omega)$ is canonically triangulated \cite{Hap}. Similarly, we have the category ${\mathbf{MF}}(A; \omega)$ of matrix factorizations and the stable category $\underline{\mathbf{MF}}(A; \omega)$; see  \cite{CCKM, MU21}.

We fix another automorphism $\tau$ on $A$ such that
$$\tau(\omega)=\omega \mbox{ and } \tau^2=\sigma.$$
Consider the skew polynomial rings $A_1=A[x; \tau]$ and $A_2=A_1[y; \tau_1]$. Here, we denote by $\tau_1$  (\emph{resp}. $\sigma_1$) the  automorphism on $A_1$, which  extends $\tau$  (\emph{resp}. $\sigma$) and fixes $x$.  Denote by $\sigma_2$ the automorphism on $A_2$, which extends $\sigma$ and fixes both $x$ and $y$. Then $(A_2, y^2-x^2+\omega, \sigma_2)$ is an nc-triple. Using it, we form the stable category $\underline{\mathbf{PF}}(A_2; y^2-x^2 + \omega)$ of projective-module factorizations of $y^2-x^2 +\omega$ over $A_2$.  We say that the integer $2$ is \emph{invertible} in $A$ if the element $1_A+1_A$ is invertible in $A$.

The main result is  noncommutative Kn\"{o}rrer periodicity  for projective-module factorizations; see Theorem~\ref{thm:main} and Proposition~\ref{prop:inverse}.

\vskip 5pt

\noindent {\bf Theorem I}. \; \emph{Consider the nc-triples $(A, \omega, \sigma)$ and $(A_2, y^2-x^2+\omega, \sigma_2)$ above. Assume that $2$ is invertible in $A$. Then there is an explicit triangle equivalence
$${\rm Kn}\colon \underline{\mathbf{PF}}(A; \omega) \stackrel{\sim}\longrightarrow \underline{\mathbf{PF}}(A_2; y^2-x^2+\omega)$$
with an explicit quasi-inverse $A_1\otimes_{A_2}-$. The equivalence ${\rm Kn}$  restricts to a triangle equivalence up to retracts
$$\underline{\mathbf{MF}}(A; \omega) \longrightarrow \underline{\mathbf{MF}}(A_2; y^2-x^2+\omega)$$
between the stable categories of matrix factorizations.
}

\vskip 5pt

The quasi-inverse  $A_1\otimes_{A_2}-$ above is brand new,  since it does not restrict to matrix factorizations.  We do not know the denseness of the restricted functor of ${\rm Kn}$ between matrix factorizations, in general. If the stable category $\underline{\mathbf{MF}}(A; \omega)$ is idempotent-split, the denseness holds true, and thus we obtain a triangle equivalence between matrix factorizations.

The key ingredient in  the proof of Theorem~I is the intermediate  nc-triple $(A_1, x^2-\omega, \sigma_1)$. The stable category
$\underline{\mathbf{PF}}(A_1; x^2-\omega)$ of projective-module factorizations of $x^2-\omega$ over $A_1$ admits a $C_2$-action induced by the \emph{swap-twisting endofunctor}; see Section~\ref{sec:PF}.

Consider the quotient ring $\overline{A_1}=A_1/{(x^2-\omega)}$, which is a \emph{noncommutative double branched cover} of $A$; see \cite{Kno, MU21}. Denote by $\overline{A_1}\mbox{-GProj}^{\rm fpd}$ the category of Gorenstein-projective $\overline{A_1}$-modules \cite{ABr, EJ} whose underlying $A$-modules have finite projective dimension. The stable category
$\overline{A_1}\mbox{-}\underline{\rm GProj}^{\rm fpd}$ is canonically triangulated \cite{Hap}. The ring $\overline{A_1}$ admits a \emph{parity automorphism} $g$, which sends $x$ to $-x$. The corresponding \emph{twisting autoequivalence} on $\overline{A_1}\mbox{-}\underline{\rm GProj}^{\rm fpd}$ induces a $C_2$-action.

The following result goes back to \cite{Kno, Yos}; see Theorem~\ref{thm:Kno} and Proposition~\ref{prop:G-PF}. It is used twice in the construction of ${\rm Kn}$ in Theorem~I.
\vskip 5pt

\noindent {\bf Theorem II}. \; \emph{We have an explicit triangle equivalenece
$${\rm SE}\colon  \overline{A_1}\mbox{-}\underline{\rm GProj}^{\rm fpd} \stackrel{\sim}\longrightarrow \underline{\mathbf{PF}}(A_1; x^2-\omega),$$
which is equivariant with respect to the two $C_2$-actions mentioned above.
}

\vskip 5pt

 In Theorem~II, it is crucial that the explicit equivalence ${\rm SE}$, the so-called \emph{scalar-extension functor}, is compatible with these two $C_2$-actions. In particular, ${\rm SE}$ induces an equivalence between the categories of equivariant objects.

The more well-known result in \cite{Eis} implies  that the (zeroth) \emph{cokernel functor}
$${\rm Cok^0}\colon \underline{\mathbf{PF}}(A_1; x^2-\omega) \stackrel{\sim} \longrightarrow \overline{A_1}\mbox{-}\underline{\rm GProj}^{\rm fpd}$$
is a triangle equivalence; compare \cite{Chen24}. The equivalence ${\rm SE}$ above is a quasi-inverse of ${\rm Cok}^0$ up to a twisting autoequivalence; see \cite{Yos}. However, in comparison with ${\rm Cok}^0$,  the functor ${\rm SE}$ seems to be more convenient for us.

Consider the quotient rings $\bar{A}=A/{(\omega)}$ and $\overline{A_2}=A_2/{(y^2-x^2+\omega)}$. The automorphism $\tau$ on $A$ induces an automorphism $\bar{\tau}$ on $\bar{A}$. We form the skew polynomial ring $B=\bar{A}[x; \bar{\tau}]$. It is naturally an $\overline{A_2}$-$\bar{A}$-bimodule; see Section~\ref{sec:compare}.

For any ring $R$, we denote by $\mathbf{D}'_{\rm sg}(R)$ its \emph{big singularity category} \cite{Buc, Orl}. The canonical functor $Q_R\colon R\mbox{-}\underline{\rm Mod}\rightarrow \mathbf{D}'_{\rm sg}(R)$ sends a module to the corresponding stalk complex concentrated in degree zero. We mention the connection of singularity categories to homological mirror symmetry \cite{Orl, Ebel} for Landau-Ginzburg models.

In Theorem~\ref{thm:comparison}, we compare the noncommuative  Kn\"{o}rrer periodicity with the tensor functor $B\otimes_{\bar{A}}-$ between the big singularity categories in  \cite{Orl}.

\vskip 5pt

\noindent {\bf Theorem~III.} \emph{Assume that the element $\omega$ is regular in $A$. Then we have a commutative diagram up to a natural isomorphism.
    \[\xymatrix{
\underline{\mathbf{PF}}(A; \omega) \ar[d]_-{Q_{\bar{A}}\circ {\rm Cok}^0} \ar[rr]^-{\rm Kn} && \underline{\mathbf{PF}}(A_2; y^2-x^2+\omega) \ar[d]^-{Q_{\overline{A_2}}\circ {\rm Cok}^0}\\
    \mathbf{D}'_{\rm sg}(\bar{A})  \ar[rr]^-{B\otimes_{\bar{A}}-} &&  \mathbf{D}'_{\rm sg}(\overline{A_2})
    }\]
Here, ${\rm Kn}$ is the explicit functor in Theorem~I, and  the two ${\rm Cok}^0$'s denote the corresponding cokernel functors.}

\vskip 5pt

The paper is structured  as follows.  In  Section~2, we recall basic facts on $C_2$-equivariantization. We study super modules via $C_2$-equivariantization in Section~3. We relate projective-module factorizations to super Gorenstein-projective modules in Section~\ref{sec:PF}. We prove that the scalar-extension functor {\rm SE} \cite{Kno, Yos}  is $C_2$-equivariant in Section~\ref{sec:SE}.

By combining the results in the previous sections, we obtain the explicit noncommutative  Kn\"{o}rrer periodicity ${\rm Kn}$ for projective-module factorizations  and its quasi-inverse  in Section~\ref{sec:Kn}. We apply the consideration to root categories \cite{Hap87, PX} in Example~\ref{exm:root}. In the final section, we compare the periodicity with a certain tensor functor \cite{Orl} between the relevant big singularity categories.

Throughout this paper, $C_2$ is a fixed cyclic group of order two. By default, modules will always mean left unital modules. For triangulated categories, we refer to \cite{Hap}, and for equivariantization, we refer to \cite{Del, DGNO, CCR}.

\section{Categories with $C_2$-actions and equivariantization}

In this section, we recall basic facts on categories with $C_2$-actions and equivariant objects.

\subsection{} Let $\mathcal{C}$ be a category. Following \cite[Section~3]{CCR},  a \emph{$C_2$-action} on $\mathcal{C}$ is given by a pair $(T, u)$, where $T\colon \mathcal{C}\rightarrow \mathcal{C}$ is an autoequivalence, and $u\colon {\rm Id}_\mathcal{C} \rightarrow T^2$ is a natural isomorphism satisfying $Tu=uT$; compare \cite{Del}. Such a $C_2$-action $(T, u)$ is called \emph{strict} if $T$ is an automorphism satisfying $T^2={\rm Id}_\mathcal{C}$ and $u$ is the identity transformation.

Assume that  $\mathcal{C}$ is endowed with a fixed $C_2$-action $(T, u)$. A \emph{$C_2$-equivariant object} $(X, \alpha)$ consists of an object $X$ in $\mathcal{C}$ and an isomorphism $\alpha\colon X\rightarrow T(X)$ satisfying $T(\alpha)\circ \alpha=u_X$. A morphism $f\colon (X, \alpha)\rightarrow (Y, \beta)$ between two $C_2$-equivariant objects is given by a morphism $f\colon X\rightarrow Y$ in $\mathcal{C}$ satisfying $\beta\circ f=T(f)\circ \alpha$. This gives rise to the category $\mathcal{C}^{(T, u)}$ of $C_2$-equivariant objects. The \emph{forgetful functor}
$$U\colon \mathcal{C}^{(T, u)}\longrightarrow \mathcal{C} $$
sends $(X, \alpha)$ to $X$.

Let $\mathcal{D}$ be another category with a fixed $C_2$-action $(S, v)$. Denote by $\mathcal{D}^{(S, v)}$ the corresponding category of $C_2$-equivariant objects. By a \emph{$C_2$-equivariant functor} $(F,\eta)\colon \mathcal{C}\rightarrow \mathcal{D}$ with respect to these two $C_2$-actions, we mean a functor $F\colon \mathcal{C} \rightarrow \mathcal{D}$ and a natural isomorphism $\eta\colon FT \rightarrow SF$ which satisfy
$$S\eta\circ \eta T\circ Fu=vF.$$
Such a $C_2$-equivariant functor $(F, \eta)$ is called a \emph{$C_2$-equivariant equivalence} if the underlying functor $F$ is an equivalence.

Any $C_2$-equivariant functor $(F, \eta)$ induces a functor between the categories of $C_2$-equivariant objects
\begin{align}\label{fun:equiv}
(F, \eta)^{C_2}\colon \mathcal{C}^{(T, u)}\longrightarrow \mathcal{D}^{(S, v)},
\end{align}
which sends $(X, \alpha)$ to $(F(X), \eta_X\circ F(\alpha))$.

The following fact is well known; see \cite[Lemma~2.1]{CCR}.

\begin{lem}\label{lem:equiva-equiva}
Assume that  $(F, \eta)\colon \mathcal{C}\rightarrow \mathcal{D}$ is a $C_2$-equivariant equivalence. Then the induced functor $(F, \eta)^{C_2}$ is an equivalence. \hfill $\square$
\end{lem}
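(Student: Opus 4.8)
The plan is to verify directly that the induced functor $(F,\eta)^{C_2}\colon \mathcal{C}^{(T,u)}\to\mathcal{D}^{(S,v)}$ is fully faithful and essentially surjective. Write $U\colon \mathcal{C}^{(T,u)}\to\mathcal{C}$ and $U'\colon\mathcal{D}^{(S,v)}\to\mathcal{D}$ for the two forgetful functors, so that $U'\circ(F,\eta)^{C_2}=F\circ U$. Faithfulness is then immediate: if two parallel morphisms of equivariant objects become equal after applying $(F,\eta)^{C_2}$, they become equal after applying $F$ as well, hence coincide since $F$ is faithful.

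For fullness, let $h\colon \bigl(F(X),\eta_X\circ F(\alpha)\bigr)\to\bigl(F(Y),\eta_Y\circ F(\beta)\bigr)$ be a morphism in $\mathcal{D}^{(S,v)}$. Since $F$ is full, $h=F(f)$ for a morphism $f\colon X\to Y$, unique by faithfulness of $F$. I would rewrite the identity expressing that $h$ is a morphism of equivariant objects, use the naturality square of $\eta$ at $f$ to replace $S(F(f))$ by $F(T(f))$ conjugated by $\eta$, cancel the isomorphisms $\eta_X$ and $\eta_Y$, and reduce to $F(\beta\circ f)=F(T(f)\circ\alpha)$; faithfulness of $F$ then yields $\beta\circ f=T(f)\circ\alpha$, so $f$ is a morphism in $\mathcal{C}^{(T,u)}$ with $(F,\eta)^{C_2}(f)=h$.

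For essential surjectivity, let $(Z,\gamma)$ be an object of $\mathcal{D}^{(S,v)}$. By density of $F$ pick an isomorphism $\phi\colon F(X)\xrightarrow{\sim}Z$ in $\mathcal{D}$. I transport $\gamma$ back along $\phi$ and along $\eta$: let $\alpha\colon X\to T(X)$ be the unique morphism with $F(\alpha)=\eta_X^{-1}\circ S(\phi)^{-1}\circ\gamma\circ\phi$, which exists and is an isomorphism because $F$ is fully faithful. By construction $\phi$ becomes an isomorphism $\bigl(F(X),\eta_X\circ F(\alpha)\bigr)\to(Z,\gamma)$ in $\mathcal{D}^{(S,v)}$, so it remains to check the equivariance axiom $T(\alpha)\circ\alpha=u_X$. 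Applying $F$, I would use the naturality square of $\eta$ at $\alpha$ to rewrite $F\bigl(T(\alpha)\bigr)$, then the identity $S(\gamma)\circ\gamma=v_Z$ coming from $(Z,\gamma)$, then the naturality of $v$ at $\phi$, and finally the cocycle identity $S\eta\circ\eta T\circ Fu=vF$ evaluated at $X$; the composite collapses to $F(u_X)$, and faithfulness of $F$ gives $T(\alpha)\circ\alpha=u_X$. Thus $(X,\alpha)\in\mathcal{C}^{(T,u)}$ and $(F,\eta)^{C_2}(X,\alpha)\cong(Z,\gamma)$.

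The only step that is not purely formal is the verification of $T(\alpha)\circ\alpha=u_X$ for the transported structure; this is the point where the cocycle condition on $\eta$ (together with the equivariance of $(Z,\gamma)$) is genuinely used, whereas fullness and faithfulness need nothing beyond $F$ being fully faithful and $\eta$ being natural. A more structural alternative would be to equip a chosen quasi-inverse $G$ of $F$ with a $C_2$-equivariant structure $\theta\colon GS\to TG$ assembled from $\eta^{-1}$ and the unit and counit of the equivalence, and to check that these unit and counit lift to isomorphisms of $C_2$-equivariant functors, so that $(G,\theta)^{C_2}$ is a quasi-inverse of $(F,\eta)^{C_2}$; this is an instance of the $2$-functoriality of equivariantization, but for the present statement the direct route above is shorter.
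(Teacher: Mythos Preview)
Your proof is correct: the faithfulness and fullness arguments are routine, and your verification of $T(\alpha)\circ\alpha=u_X$ via the cocycle identity $S\eta\circ\eta T\circ Fu=vF$ goes through exactly as you outline. The paper itself gives no proof of this lemma, merely citing \cite{CCR}; your direct argument therefore fills in what the paper leaves to the literature, and is entirely in the spirit of the standard proof one would find in that reference.
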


\subsection{} Assume that $\mathcal{C}$ is an additive category with a fixed $C_2$-action $(T, u)$. Then the category $\mathcal{C}^{(F, u)}$ is also additive.

Each object $X$ in $\mathcal{C}$ gives rise to a $C_2$-equivariant object
$${\rm Ind}(X)=(X\oplus T(X), \begin{pmatrix}
    0 & {\rm Id}_{T(X)}\\
    u_X & 0
\end{pmatrix}).$$
Here, we identify $T(X\oplus T(X))$ with $T(X)\oplus T^2(X)$. Any morphism $f\colon X\rightarrow X'$ in $\mathcal{C}$ yields a morphism
$\begin{pmatrix}
    f & 0\\
     0 & T(f)
\end{pmatrix}\colon {\rm Ind}(X)\rightarrow {\rm Ind}(Y)$ in $\mathcal{C}^{(T, u)}$. Therefore, we have the \emph{induction functor}
$${\rm Ind}\colon \mathcal{C} \longrightarrow \mathcal{C}^{(T, u)}.$$

The following lemma is also well known; see \cite[Subsection~4.2]{CCR}.

\begin{lem}\label{lem:Fro}
We have two adjoint pairs $({\rm Ind}, U)$ and $(U, {\rm Ind})$.
\end{lem}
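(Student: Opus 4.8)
The plan is to establish the two adjunctions by exhibiting explicit unit and counit natural transformations and verifying the triangle identities, exploiting the fact that the $C_2$-action $(T,u)$ makes $U$ and ${\rm Ind}$ almost-inverse up to a direct-sum summand. First I would treat the adjoint pair $({\rm Ind}, U)$. For objects $X$ in $\mathcal{C}$ and $(Y,\beta)$ in $\mathcal{C}^{(T,u)}$, I would write down a natural bijection
$$\operatorname{Hom}_{\mathcal{C}^{(T,u)}}\bigl({\rm Ind}(X), (Y,\beta)\bigr) \;\cong\; \operatorname{Hom}_\mathcal{C}(X, Y).$$
Concretely, a morphism ${\rm Ind}(X)\to (Y,\beta)$ is a morphism $(f\ \ g)\colon X\oplus T(X)\to Y$ in $\mathcal{C}$ compatible with the equivariant structures; the compatibility condition $\beta\circ(f\ g) = T(f\ g)\circ\left(\begin{smallmatrix}0 & {\rm Id}\\ u_X & 0\end{smallmatrix}\right)$ forces $g$ to be determined by $f$ via $g = \beta^{-1}\circ T(f)$ (using $T(\beta^{-1})\circ\beta^{-1}=u_Y^{-1}$ and $Tu=uT$). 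Thus the assignment $(f\ g)\mapsto f$ is a bijection, visibly natural in both variables, which is precisely the adjunction $({\rm Ind}, U)$ with $U$ on the right.

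Next I would handle $(U, {\rm Ind})$, i.e. the natural bijection
$$\operatorname{Hom}_\mathcal{C}\bigl((Y,\beta)\text{'s underlying object},\ X\bigr)\ \text{viewed suitably},$$
more precisely $\operatorname{Hom}_\mathcal{C}(Y, X)\cong \operatorname{Hom}_{\mathcal{C}^{(T,u)}}\bigl((Y,\beta), {\rm Ind}(X)\bigr)$. Here a morphism $(Y,\beta)\to{\rm Ind}(X)$ is a pair $\left(\begin{smallmatrix}p\\ q\end{smallmatrix}\right)\colon Y\to X\oplus T(X)$ satisfying the equivariance condition, which now forces $q = T(p)\circ\beta$ (or the analogous relation), so again the component $p\colon Y\to X$ determines the whole morphism and the correspondence $\left(\begin{smallmatrix}p\\ q\end{smallmatrix}\right)\mapsto p$ is a natural bijection. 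This gives ${\rm Ind}$ as a right adjoint of $U$. Rather than redo the computation from scratch, I would note that this second adjunction follows from the first together with the observation that $T$ is an \emph{auto}equivalence: one can either invoke a general self-duality of the equivariantization construction, or simply remark that $U$ has the same formal behaviour on both sides because ${\rm Ind}(X)$ is self-dual in the relevant sense (its two summands $X$ and $T(X)$ play symmetric roles once one applies $T$ and $u$).

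The main obstacle, such as it is, will be bookkeeping: making sure the equivariance conditions $T(\alpha)\circ\alpha = u_X$ and $Tu=uT$ are used correctly when solving for the ``redundant'' component of each morphism, and checking that the resulting bijections are genuinely natural — i.e. compatible with postcomposition by equivariant morphisms and with the matrix form $\left(\begin{smallmatrix}f & 0\\ 0 & T(f)\end{smallmatrix}\right)$ of ${\rm Ind}(f)$. There is no conceptual difficulty; the only mild subtlety is keeping the identification $T(X\oplus T(X))\cong T(X)\oplus T^2(X)$ and the isomorphism $u$ straight throughout. Since the statement and essentially this argument already appear in \cite[Subsection~4.2]{CCR}, I would keep the write-up brief, presenting the two hom-set bijections explicitly and leaving the (routine) verification of the triangle identities to the reader.
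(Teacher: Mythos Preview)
Your proposal is correct and follows essentially the same approach as the paper: the paper exhibits the very same hom-set bijection $f\mapsto (f,\ \beta^{-1}\circ T(f))$ for the adjunction $({\rm Ind}, U)$, and for the second adjunction simply refers to \cite[(4.4)]{CCR} rather than writing out the symmetric computation you sketch. Your slightly more detailed treatment of $(U,{\rm Ind})$ is fine and entirely in the same spirit.
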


\begin{proof}
Let $X$ be an object in $\mathcal{C}$ and $(Y, \beta)$ a $C_2$-equivariant object. We have a natural isomorphism
\begin{align}\label{adj:Ind-U}
{\rm Hom}_\mathcal{C}(X, Y)\longrightarrow {\rm Hom}_{\mathcal{C}^{(T, u)}}({\rm Ind}(X), (Y, \beta)),
\end{align}
sending $f$ to $(f, \beta^{-1}\circ T(f))$. This yields the adjoint pair $({\rm Ind}, U)$. For the other one, we refer to \cite[(4.4)]{CCR}.
\end{proof}

Assume that $\mathcal{C}$ is an abelian category. Then so is the category $\mathcal{C}^{(T, u)}$. Moreover, the forgetful functor $U$ is exact.

\begin{lem}\label{lem:en-proj}
    Assume that $\mathcal{C}$ is an abelian category with enough projective objects. Then so is the category $\mathcal{C}^{(T, u)}$.
\end{lem}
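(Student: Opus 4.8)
The plan is to use the adjoint pair $({\rm Ind}, U)$ from Lemma~\ref{lem:Fro} to transport projectivity from $\mathcal{C}$ up to $\mathcal{C}^{(T,u)}$. First I would show that ${\rm Ind}(P)$ is a projective object in $\mathcal{C}^{(T,u)}$ whenever $P$ is projective in $\mathcal{C}$: since $U$ is exact, the adjunction isomorphism ${\rm Hom}_{\mathcal{C}^{(T,u)}}({\rm Ind}(P), -)\cong {\rm Hom}_{\mathcal{C}}(P, U(-))$ exhibits the left-hand functor as a composite of two exact functors, hence exact, which is precisely the projectivity of ${\rm Ind}(P)$. (Here I use that in an abelian category, epimorphisms in $\mathcal{C}^{(T,u)}$ are detected by the exact forgetful functor $U$.)

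Next, given an arbitrary object $(X,\alpha)$ in $\mathcal{C}^{(T,u)}$, I would produce a projective object surjecting onto it. Choose a projective object $P$ in $\mathcal{C}$ together with an epimorphism $p\colon P\twoheadrightarrow X$, which exists because $\mathcal{C}$ has enough projectives. By the adjunction \eqref{adj:Ind-U}, the morphism $p$ corresponds to a morphism $\tilde{p}\colon {\rm Ind}(P)\rightarrow (X,\alpha)$ in $\mathcal{C}^{(T,u)}$, explicitly $\tilde p=(p,\ \alpha^{-1}\circ T(p))$ under the identification there. Applying the exact, hence epimorphism-reflecting-and-preserving, forgetful functor $U$, we get $U(\tilde p)=(p,\ \alpha^{-1}\circ T(p))\colon P\oplus T(P)\rightarrow X$, whose first component is already the epimorphism $p$; therefore $U(\tilde p)$ is an epimorphism in $\mathcal{C}$, and since $U$ reflects epimorphisms, $\tilde p$ is an epimorphism in $\mathcal{C}^{(T,u)}$. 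This shows $\mathcal{C}^{(T,u)}$ has enough projectives, with ${\rm Ind}(P)$ for $P$ projective furnishing the required projective covers.

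I do not expect a serious obstacle here; the only points requiring a little care are the routine verifications that $U$ is exact and reflects epimorphisms (stated just before the lemma, so I may invoke it), and that ${\rm Ind}$ is genuinely left adjoint to $U$ with the explicit unit/counit from \eqref{adj:Ind-U}. One could alternatively argue more conceptually that a left adjoint preserves projectives whenever its right adjoint is exact, and that $\{{\rm Ind}(P) : P \text{ projective in } \mathcal{C}\}$ generates enough projectives because the unit of the adjunction composed with ${\rm Ind}(p)$ splits appropriately; but the direct computation above is cleaner and self-contained. A closing remark worth including: the same argument with the pair $(U,{\rm Ind})$ shows that $\mathcal{C}^{(T,u)}$ also has enough injectives when $\mathcal{C}$ does, and that ${\rm Ind}$ preserves both projectives and injectives, which will be convenient later when discussing Gorenstein-projective objects.
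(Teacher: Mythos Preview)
Your proof is correct and follows essentially the same approach as the paper: use the adjunction $({\rm Ind},U)$ to see that ${\rm Ind}(P)$ is projective, then for any $(X,\alpha)$ pick an epimorphism $p\colon P\twoheadrightarrow X$ and observe that the adjoint morphism $(p,\alpha^{-1}\circ T(p))\colon {\rm Ind}(P)\to (X,\alpha)$ is an epimorphism. You are a bit more explicit than the paper in justifying projectivity of ${\rm Ind}(P)$ via exactness of $U$ and in arguing that $U$ reflects epimorphisms, but the argument is the same.
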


\begin{proof}
    By the adjunction isomorphism (\ref{adj:Ind-U}), ${\rm Ind}(P)$ is projective for any projective object $P$ in $\mathcal{C}$. Take any  $C_2$-equivariant object $(Y, \beta)$. By assumption, we take an epimorphism $\pi\colon P\rightarrow Y$ in $\mathcal{C}$ with $P$ projective.  By the isomorphism (\ref{adj:Ind-U}) again, we have the corresponding morphism
    $$(\pi, \beta^{-1}\circ T(\pi))\colon {\rm Ind}(P)\longrightarrow (Y, \beta),$$ which is certainly an epimorphism. This proves that $\mathcal{C}^{(T, u)}$ has enough projective objects.
\end{proof}

\subsection{} Let $\mathcal{A}$ be an additive category. An idempotent $e\colon X\rightarrow X$ is said to be \emph{split}, if there is a factorization $X\stackrel{a}\rightarrow X' \stackrel{b}\rightarrow  X$ of $e$ satisfying ${\rm Id}_{X'}=a\circ b$. In such a situation, the object $X'$ is called a \emph{retract} of $X$. The additive category $\mathcal{A}$ is said to be \emph{idempotent-split} if each idempotent in $\mathcal{A}$ splits.

Let $H\colon \mathcal{A} \rightarrow \mathcal{B}$ be an additive functor between two additive categories. We say that $H$ is an \emph{equivalence up to retracts}, if it is fully faithful and each object $B$ in $\mathcal{B}$ is a retract of $H(A)$ for some object $A$ in $\mathcal{A}$.

The following result is well known; see \cite[Lemma~3.4(3)]{Chen15}.

\begin{lem}\label{lem:e-uptore}
Let $H\colon \mathcal{A} \rightarrow \mathcal{B}$  be  an equivalence up to retracts. Assume that $\mathcal{A}$ is idempotent-split. Then $H$ is an equivalence. \hfill $\square$
\end{lem}

We say that  the integer $2$ is \emph{invertible} in $\mathcal{A}$, if for any morphism $f\colon X\rightarrow Y$, there is a unique morphism $f'$ satisfying $f=2f'$; see \cite[p.255]{RR}. For example, if $\mathcal{A}$ is $\mathbb{K}$-linear over a field $\mathbb{K}$, $2$ is invertible in $\mathcal{A}$ if and only if  the characteristic of  the base field $\mathbb{K}$ is different from $2$.

Assume that $\mathcal{A}$ is an abelian category with enough projective objects. Denote by $\underline{\mathcal{A}}$ its \emph{stable category} \cite{ABr} modulo projective objects. For two objects $X$ and $Y$, its Hom-group in  $\underline{\mathcal{A}}$ is denoted by $\underline{\rm Hom}_\mathcal{A}(X, Y)$. For any morphism $f\colon X\rightarrow Y$ in $\mathcal{A}$, we denote by $\underline{f}$ its image in $\underline{\rm Hom}_\mathcal{A}(X, Y)$.

Let $R$ be a ring. Denote by $R\mbox{-Mod}$ the category of left $R$-modules. The stable category of $R\mbox{-Mod}$ is denoted by $R\mbox{-\underline{\rm Mod}}$. The following well-known fact follows from a general result \cite{Fre}.

\begin{lem}\label{lem:idem-R}
    Let $R$ be a ring. Then the stable category $R\mbox{-\underline{\rm Mod}}$ is always idempotent-split. \hfill $\square$
\end{lem}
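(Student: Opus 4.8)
The plan is to run an Eilenberg swindle, following Freyd's method of splitting homotopy idempotents~\cite{Fre}. The only structural inputs are that $R\mbox{-Mod}$ is cocomplete — in particular has countable coproducts — and that the projection $R\mbox{-Mod}\to R\mbox{-\underline{\rm Mod}}$ is additive. Recall first the standard fact that an idempotent $\bar e\colon X\to X$ in any category splits as soon as the sequential colimit $\varinjlim\bigl(X\xrightarrow{\bar e}X\xrightarrow{\bar e}\cdots\bigr)$ exists, in which case this colimit is precisely the image object, i.e.\ the splitting. So it suffices, for each idempotent $\bar e$ of $R\mbox{-\underline{\rm Mod}}$, to realize such a mapping telescope inside $R\mbox{-\underline{\rm Mod}}$. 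To begin, let $\bar e\colon M\to M$ be idempotent in $R\mbox{-\underline{\rm Mod}}$ and lift it to a homomorphism $e\colon M\to M$. Since $\bar e^2=\bar e$, the defect $e^2-e$ factors through a projective module, say $e^2-e=\beta\alpha$ with $\alpha\colon M\to Q$, $\beta\colon Q\to M$ and $Q$ projective.

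Next I would pass to the module $\hat M:=M\oplus Q^{(\mathbb N)}$, where $Q^{(\mathbb N)}=\bigoplus_{n\ge 1}Q$; as $Q^{(\mathbb N)}$ is projective, hence zero in $R\mbox{-\underline{\rm Mod}}$, the inclusion $M\hookrightarrow\hat M$ and the projection $\hat M\to M$ are mutually inverse isomorphisms there. The heart of the argument is to use the infinitely many copies of $Q$ as ``slack'' in which to build an honest idempotent $\hat e\in\mathrm{End}_R(\hat M)$ representing $\bar e$ under $M\cong\hat M$: one puts $e$ in the $(M,M)$-slot, couples $M$ with the first copy of $Q$ via $\alpha$ and $\beta$, and defines the entries along the copies of $Q$ — essentially a shift of $Q^{(\mathbb N)}$ twisted by $\alpha\beta$, with suitable signs — so that the defect produced in each slot is carried into the next copy of $Q$. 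The formal geometric correction $\sum_{k\ge 0}(\cdots)^{k}$ to $e$, which need not converge in $\mathrm{End}_R(M)$, then becomes convergent entrywise on $\hat M$, because each homogeneous summand receives only finitely many contributions; a direct computation gives $\hat e^2=\hat e$, and $\hat e$ maps to $\bar e$ modulo projectives. Equivalently, $\hat M$ together with $\hat e$ realizes the mapping telescope of $\bar e$.

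It then remains to split $\hat e$, which is automatic since $R\mbox{-Mod}$ is abelian and therefore idempotent-split; transporting the resulting decomposition of $\hat M$ along the isomorphism $\hat M\cong M$ in $R\mbox{-\underline{\rm Mod}}$ splits $\bar e$. As $\bar e$ was arbitrary, $R\mbox{-\underline{\rm Mod}}$ is idempotent-split, with no hypothesis on $R$. The hard part will be the second step: writing down $\hat e$ and verifying the identity $\hat e^2=\hat e$. This is exactly where the countable coproduct $Q^{(\mathbb N)}$ is indispensable — over an arbitrary ring a homotopy idempotent need not lift to an honest idempotent without such infinite slack — whereas the remaining points (idempotent-splitness of the abelian category $R\mbox{-Mod}$, projectivity of $Q^{(\mathbb N)}$, and transport of splittings along an equivalence) are routine.
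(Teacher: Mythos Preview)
Your plan is sound and amounts to unpacking Freyd's telescope argument \cite{Fre}, which is all the paper invokes; the paper gives no proof beyond that citation. Rectifying the stable idempotent to an honest one on $M\oplus Q^{(\mathbb N)}$ by letting the formal correction series converge in the grading is precisely Freyd's construction transported to the stable module category, so your approach and the paper's coincide.
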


In what follows, we assume that $\mathcal{A}$ is an abelian category with enough projective objects. We fix a $C_2$-action $(T, u)$ on $\mathcal{A}$. By Lemma~\ref{lem:en-proj}, the category $\mathcal{A}^{(T, u)}$ is also abelian with enough projective objects. So, we form the stable category $\underline{\mathcal{A}^{(T, u)}}$.

On the other hand, the $C_2$-action $(T, u)$ on $\mathcal{A}$ induces a $C_2$-action on $\underline{\mathcal{A}}$. By abuse of notation, the induced $C_2$-action is still denoted by $(T, u)$. We form the category $(\underline{\mathcal{A}})^{(T, u)}$ of $C_2$-equivariant objects. Moreover, we have the \emph{comparison functor}
$$K\colon \underline{\mathcal{A}^{(T, u)}}  \longrightarrow  (\underline{\mathcal{A}})^{(T, u)}, \; (X, \alpha) \longmapsto (X, \underline{\alpha}).$$

The following result is analogous to \cite[Proposition~4.5]{Chen15} with a very similar proof.

\begin{prop}\label{prop:comp}
Assume further that $2$ is invertible in $\mathcal{A}$. Then the comparison functor
$K$ is an equivalence up to retracts. If moreover $\underline{\mathcal{A}^{(T, u)}}$ is idempotent-split, $K$ is an equivalence.
\end{prop}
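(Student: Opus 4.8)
The plan is to establish the three defining properties of an equivalence up to retracts for $K$ — faithfulness, fullness, and the fact that every object of $(\underline{\mathcal{A}})^{(T,u)}$ is a retract of one lying in the image of $K$ — and then to deduce the last sentence from Lemma~\ref{lem:e-uptore}. Two preliminary remarks will be used throughout. First, since $2$ is invertible in $\mathcal{A}$, it is also invertible in $\mathcal{A}^{(T,u)}$, in $\underline{\mathcal{A}}$, and in $(\underline{\mathcal{A}})^{(T,u)}$: in each case the relevant Hom-groups are subgroups, quotients, or subgroups of quotients of those of $\mathcal{A}$, and the bijectivity of multiplication by $2$ passes along routinely — for the stable categories using that if $2f'$ factors through a projective object then so does $f'$, as $2$ is invertible in the ambient abelian category. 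Second, the induction functor ${\rm Ind}$, the forgetful functor $U$, and the two adjunctions of Lemma~\ref{lem:Fro} are available for \emph{every} additive category equipped with a $C_2$-action, in particular for $\underline{\mathcal{A}}$ with the induced action; and $K$ commutes with induction, in the sense that the image under $K$ of ${\rm Ind}(X)\in\mathcal{A}^{(T,u)}$ is the object obtained by inducing $X\in\underline{\mathcal{A}}$.

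The computational core is the identity: for any $C_2$-equivariant object $(Y,\beta)$ in an additive category with $C_2$-action, the composite $(Y,\beta)\xrightarrow{\ \eta\ }{\rm Ind}(U(Y,\beta))\xrightarrow{\ \varepsilon\ }(Y,\beta)$ — with $\eta$ the unit of $(U,{\rm Ind})$ and $\varepsilon$ the counit of $({\rm Ind},U)$ — equals $2\cdot{\rm Id}_{(Y,\beta)}$. Indeed, from the isomorphism (\ref{adj:Ind-U}) and a direct check of its companion one reads off $\eta_{(Y,\beta)}=\begin{pmatrix}{\rm Id}_Y\\\beta\end{pmatrix}$ and $\varepsilon_{(Y,\beta)}=\begin{pmatrix}{\rm Id}_Y&\beta^{-1}\end{pmatrix}$, so the composite is ${\rm Id}_Y+\beta^{-1}\circ\beta=2\cdot{\rm Id}_Y$. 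Hence, after inverting $2$, $(Y,\beta)$ is a direct summand of ${\rm Ind}(U(Y,\beta))$, split by $\bigl(\tfrac12\varepsilon_{(Y,\beta)},\,\eta_{(Y,\beta)}\bigr)$. Applying this inside $(\underline{\mathcal{A}})^{(T,u)}$ to an arbitrary object $(X,\underline{\alpha})$ exhibits it as a retract of the object obtained by inducing $X\in\underline{\mathcal{A}}$, which by the second remark equals $K$ of the corresponding object of $\mathcal{A}^{(T,u)}$; this gives the retract form of essential surjectivity.

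For faithfulness, let $f\colon(X,\alpha)\to(Y,\beta)$ in $\mathcal{A}^{(T,u)}$ have $K(f)=0$, that is, $f=q\circ p$ in $\mathcal{A}$ with $p\colon X\to P$, $q\colon P\to Y$ and $P$ projective. By naturality of $\eta$ and the core identity, $f=\tfrac12\,\varepsilon_{(Y,\beta)}\circ{\rm Ind}(f)\circ\eta_{(X,\alpha)}$, and ${\rm Ind}(f)={\rm Ind}(q)\circ{\rm Ind}(p)$ factors through ${\rm Ind}(P)$, which is projective in $\mathcal{A}^{(T,u)}$ by Lemma~\ref{lem:en-proj}; hence $f$ factors through a projective object of $\mathcal{A}^{(T,u)}$ and $\underline{f}=0$. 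For fullness, given $\underline{g}\colon(X,\underline{\alpha})\to(Y,\underline{\beta})$ in $(\underline{\mathcal{A}})^{(T,u)}$, pick any representative $g\colon X\to Y$ in $\mathcal{A}$ and set $\tilde g=\beta^{-1}\circ T(g)\circ\alpha$ and $g'=\tfrac12(g+\tilde g)$. Using that $\underline{g}$ is equivariant one gets $\underline{\tilde g}=\underline{\beta}^{-1}\circ T(\underline{g})\circ\underline{\alpha}=\underline{g}$, hence $\underline{g'}=\underline{g}$; and from $T(\alpha)\circ\alpha=u_X$, $T(\beta)\circ\beta=u_Y$ and the naturality of $u$ one checks $T(\tilde g)\circ\alpha=\beta\circ g$, whence $\beta\circ g'=\tfrac12(\beta\circ g+T(g)\circ\alpha)=T(g')\circ\alpha$, so $g'$ is a genuine morphism in $\mathcal{A}^{(T,u)}$ with $K(g')=\underline{g}$. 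Combining the three properties, $K$ is an equivalence up to retracts; if in addition $\underline{\mathcal{A}^{(T,u)}}$ is idempotent-split, Lemma~\ref{lem:e-uptore} upgrades $K$ to an equivalence.

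I expect the main obstacle to be the bookkeeping behind the second preliminary remark: verifying that ${\rm Ind}$, $U$, their units and counits, and the "direct summand of ${\rm Ind}$" phenomenon are transported compatibly among $\mathcal{A}$, $\mathcal{A}^{(T,u)}$, $\underline{\mathcal{A}}$ and $(\underline{\mathcal{A}})^{(T,u)}$, and in particular that $K$ intertwines induction at the abelian and stable levels. Once this is in place, the factor-of-$2$ identity and the averaging that produces $g'$ are short, and the argument runs parallel to that of \cite[Proposition~4.5]{Chen15}.
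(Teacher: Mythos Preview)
Your argument is correct. The paper takes a different, more abstract route: it identifies $K$ with the comparison functor of the adjunction $(\underline{\rm Ind},\underline{U})$ on $\underline{\mathcal{A}}$, observes that $U$ (and hence $\underline{U}$) is \emph{separable} when $2$ is invertible, and then invokes the general result \cite[Proposition~3.5]{Chen15} on comparison functors for separable monads. Your approach unpacks exactly this machinery: the identity $\varepsilon\circ\eta=2\cdot{\rm Id}$ is precisely the separability datum (with section $\tfrac12\eta$), the factorization $f=\tfrac12\,\varepsilon\circ{\rm Ind}(f)\circ\eta$ through ${\rm Ind}(P)$ is the standard consequence of separability that gives faithfulness, and the Reynolds averaging $g'=\tfrac12(g+\beta^{-1}T(g)\alpha)$ is the explicit version of the fullness part of the monadic comparison. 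What you gain is a self-contained proof with no external appeal beyond Lemma~\ref{lem:e-uptore}; what the paper gains is brevity, given the cited machinery, and a proof that transparently generalizes to other finite group actions once separability is available.
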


\begin{proof}
The functors $U\colon \mathcal{A}^{(T, u)}\rightarrow \mathcal{A}$ and ${\rm Ind}\colon \mathcal{A}\rightarrow \mathcal{A}^{(T, u)}$ induce the corresponding ones $\underline{U}$ and $\underline{\rm Ind}$ between the stable categories. Moreover, they form an adjoint pair $(\underline{\rm Ind}, \underline{U})$. Since $2$ is invertible in $\mathcal{A}$, the functor $U$ is separable; see \cite[Lemma~4.4(1)]{Chen15}. It follows that $\underline{U}$ is also separable.

We observe that the monad on $\underline{\mathcal{A}}$ defined by the adjoint pair  $(\underline{\rm Ind}, \underline{U})$ coincides with the one associated to the induced $C_2$-action $(T, u)$ on $\underline{\mathcal{A}}$; see \cite[Subsection~4.2]{CCR}. The functor $K$ above coincides with the corresponding comparison functor associated to $(\underline{\rm Ind}, \underline{U})$; see \cite[VI.3]{McLane}. Consequently, we can apply \cite[Proposition~3.5]{Chen15} to infer that $K$ is an equivalence up to retracts. The last statement follows from Lemma~\ref{lem:e-uptore}.
\end{proof}

Let $\mathcal{T}$ be a triangulated category. A $C_2$-action $(T, u)$ on $\mathcal{T}$ is called a \emph{triangle $C_2$-action} if $T$ is a triangle autoequivalence and $u\colon {\rm Id}_\mathcal{T}\rightarrow T^2$ is an isomorphism between triangle functors; compare \cite[Subsection~6.1]{CCR}.

The following result allows us to deal with equivariant triangulated categories.

\begin{lem}\label{lem:tri}
Let $\mathcal{T}$ be a triangulated category with a triangle $C_2$-action $(T, u)$. Assume that $2$ is invertible in $\mathcal{T}$. Then the category $\mathcal{T}^{(T, u)}$ is naturally a pre-triangulated category, that is, a triangulated category possibly without the octahedral axiom.
\end{lem}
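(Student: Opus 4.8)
The plan is to produce the pre-triangulated structure on $\mathcal{T}^{(T,u)}$ by transporting the triangulated structure of $\mathcal{T}$ along the forgetful functor $U$, using the invertibility of $2$ to split the relevant idempotents and obstruction classes. First I would define the candidate shift functor on $\mathcal{T}^{(T,u)}$: since $(T,u)$ is a \emph{triangle} $C_2$-action, the suspension $[1]$ on $\mathcal{T}$ commutes with $T$ up to a coherent isomorphism, so $[1]$ lifts to an autoequivalence of $\mathcal{T}^{(T,u)}$ sending $(X,\alpha)$ to $(X[1],\alpha[1])$ (with the canonical identification of $T(X[1])$ with $T(X)[1]$). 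Then I would declare a triangle in $\mathcal{T}^{(T,u)}$ to be distinguished if and only if its image under $U$ is a distinguished triangle in $\mathcal{T}$; equivalently, one takes the distinguished triangles to be those isomorphic to ones obtained by lifting a mapping cone construction performed equivariantly. The bulk of the work is then to verify the pre-triangulated axioms (TR1), (TR2), (TR3) for this class.

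The key technical input is the existence of \emph{equivariant mapping cones}. Given a morphism $f\colon (X,\alpha)\to (Y,\beta)$ in $\mathcal{T}^{(T,u)}$, I would first complete the underlying $U(f)=f\colon X\to Y$ to a distinguished triangle $X\xrightarrow{f} Y\xrightarrow{g} Z\xrightarrow{h} X[1]$ in $\mathcal{T}$. Applying $T$ and using the natural isomorphisms coming from the triangle $C_2$-action, one gets a second distinguished triangle on $T(X)\to T(Y)\to T(Z)$; since $f$ is equivariant, the pair $(\alpha,\beta)$ is a morphism of triangles from the first to the (rotated/identified) second, and by (TR3) in $\mathcal{T}$ there exists a fill-in $\gamma\colon Z\to T(Z)$. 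The standard obstruction is that such a $\gamma$ need not satisfy the cocycle condition $T(\gamma)\circ\gamma = u_Z$; but this is exactly where invertibility of $2$ enters. The difference $T(\gamma)\circ\gamma\circ u_Z^{-1} - \mathrm{Id}_{T^2 Z}$, or rather the obstruction to correcting $\gamma$, lives in a group on which multiplication by $2$ is invertible, so one can average $\gamma$ (replace it by $\tfrac12(\gamma + u_Z^{-1}\circ T(\gamma)^{-1}\cdot(\cdots))$, in the style of the averaging argument behind Proposition~\ref{prop:comp} and \cite[Lemma~4.4]{Chen15}) to obtain an honest equivariant structure $\gamma$ on $Z$. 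This makes $(Z,\gamma)$ an object of $\mathcal{T}^{(T,u)}$ and $(f,\alpha,\beta,\gamma,\ldots)$ a candidate distinguished triangle.

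After that, the remaining axioms are comparatively formal. Axiom (TR1): identity triangles and the closure under isomorphism are immediate from the definition, and every $f$ extends to a triangle by the cone construction above. Axiom (TR2), the rotation axiom, follows because $U$ detects and reflects distinguished triangles and because the chosen identifications $T(X[1])\cong T(X)[1]$ are compatible with rotation (this is precisely the coherence built into "$u$ is an isomorphism of triangle functors"). Axiom (TR3), the morphism axiom: given a commutative square of equivariant morphisms between the bases of two distinguished triangles, apply (TR3) in $\mathcal{T}$ to get a fill-in on the cones, then correct it to an equivariant morphism by the same $2$-is-invertible averaging trick. I would emphasize in the write-up that we make no claim about the octahedral axiom (TR4) — hence "pre-triangulated" — since the averaging procedure does not obviously respect the compatibilities among the four triangles in an octahedron.

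The step I expect to be the main obstacle is the careful bookkeeping of the natural isomorphisms arising from the triangle $C_2$-action: one must keep track simultaneously of the isomorphism $T^2\cong\mathrm{Id}$ (namely $u$), the commutation of $T$ with $[1]$, and the fact that $u$ is an isomorphism of \emph{triangle} functors, and check that all the averaging corrections are consistent across these. Concretely, verifying that the corrected $\gamma$ still makes the cone triangle commute (not just that it satisfies the cocycle condition) requires showing the correction term is a morphism of triangles, which is where one genuinely uses that we are inside a triangulated category and not merely an additive one. I would handle this by organizing the argument exactly parallel to \cite[Proposition~4.5]{Chen15} and \cite[Subsection~6.1]{CCR}, citing those for the analogous statements and only spelling out the points where the triangulated (as opposed to exact/abelian) setting changes the bookkeeping.
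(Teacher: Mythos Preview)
Your direct approach is in the right spirit and is essentially what underlies the references the paper cites, but the paper's own proof is purely citational: it invokes Balmer's theorem that the Eilenberg--Moore category of a separable monad on a triangulated category is pre-triangulated \cite[Theorem~4.1, Corollary~4.3]{Bal}, observes that the monad associated to the $C_2$-action is separable precisely because $2$ is invertible (cf.\ \cite[Lemma~4.4]{Chen15}), and handles Balmer's idempotent-completeness hypothesis by passing to the idempotent completion of $\mathcal{T}$.

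There is a genuine gap in your averaging step. The cocycle condition $T(\gamma)\circ\gamma=u_Z$ is \emph{quadratic} in $\gamma$, so the obstruction does not simply ``live in a group on which $2$ acts invertibly'' and cannot be killed by a linear average. Concretely, if $\gamma$ is any fill-in (necessarily an isomorphism, since $\alpha,\beta$ are), then $\tilde\gamma:=T(\gamma)^{-1}\circ u_Z$ is also a fill-in, and the natural candidate $\gamma_{\rm avg}=\tfrac12(\gamma+\tilde\gamma)$ satisfies
\[
T(\gamma_{\rm avg})\circ\gamma_{\rm avg}=\tfrac14\bigl(T\gamma\circ\gamma \;+\; 2u_Z \;+\; u_Z\circ(T\gamma\circ\gamma)^{-1}\circ u_Z\bigr),
\]
which equals $u_Z$ if and only if $C+C^{-1}=2$ for $C=u_Z^{-1}\circ T\gamma\circ\gamma$, i.e.\ only if $(C-\mathrm{Id})^2=0$; there is no reason this holds. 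More generally, writing an arbitrary fill-in as $\gamma_0+\eta$ with $(0,0,\eta)$ a morphism of triangles, the cocycle equation becomes $T\gamma_0\eta+T\eta\,\gamma_0+T\eta\,\eta=u_Z-T\gamma_0\gamma_0$, and the quadratic term $T\eta\,\eta$ need not vanish. Balmer's argument does not average $\gamma$ directly; it uses the separability section $\sigma\colon M\to M^2$ of the monad $M\simeq\mathrm{Id}\oplus T$ to \emph{project} an arbitrary $M$-action on the cone onto an associative one, and that projection formula is what actually requires $2$ to be invertible. If you want to carry out the direct proof, you should follow that mechanism rather than the linear averaging you sketch; you should also say why the equivariant cone lands back in $\mathcal{T}^{(T,u)}$ rather than only in $\widehat{\mathcal{T}}^{(T,u)}$, which is the idempotent-completion point the paper makes explicit.
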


\begin{proof}
    The result is essentially due to \cite[Theorem~4.1 and Corollary~4.3]{Bal}; compare \cite[Lemma~4.4]{Chen15} and \cite[Theorem~6.9]{Ela}. Here, we mention that the idempotent-split assumption in \cite{Bal} is superfluous, since we might replace $\mathcal{T}$ with its idempotent completion \cite{BS}, which  carries an extended  $C_2$-action. Moreover, a triangle in $\mathcal{T}^{(T, u)}$ is exact if and only if the underlying triangle in $\mathcal{T}$ is exact.
\end{proof}

We mention that, in most cases, the category $\mathcal{T}^{(T, u)}$ is triangulated; see \cite[Corollary~6.10]{Ela}. The following remark will be used implicitly  in the sequel.

\begin{rem}
    Assume that $\mathcal{A}$ is a Frobenius exact category with an exact $C_2$-action $(T, u)$. The stable category $\underline{\mathcal{A}}$ is canonically triangulated \cite[I.2]{Hap}. Moreover, the induced $C_2$-action $(T, u)$ on $\underline{\mathcal{A}}$ is a triangle $C_2$-action. Assume further that  $2$ is invertible in $\mathcal{A}$. By Lemma~\ref{lem:tri}, the category $(\underline{\mathcal{A}})^{(T, u)}$ is pretriangulated.

    On the other hand, the category  $\mathcal{A}^{(T, u)}$ of $C_2$-equivariant objects in $\mathcal{A}$ is naturally a Frobenius exact category. Therefore, its stable category $\underline{\mathcal{A}^{(T, u)}}$ is triangulated. In this situation, the comparison functor
    $$K\colon \underline{\mathcal{A}^{(T, u)}}  \longrightarrow  (\underline{\mathcal{A}})^{(T, u)}$$
    is a triangle  funtor, and thus a triangle equivalence up to retracts by Proposition~\ref{prop:comp}.
\end{rem}

\section{Super modules and equivariantization}

In  this section, we study super modules and the relevant categories of $C_2$-equivariant objects.

\subsection{} Let $R=R_{\bar{0}}\oplus R_{\bar{1}}$ be a \emph{super ring}, that is, a $\mathbb{Z}_2$-graded ring. A \emph{super module} over $R$ is a $\mathbb{Z}_2$-graded module. It is often denoted by $M=M_{\bar{0}}\oplus M_{\bar{1}}$, where $M_{\bar{0}}$ is the even part and $M_{\bar{1}}$ is the odd part. Denote by $R\mbox{-SMod}$ the abelian category of all super $R$-modules, whose morphisms are degree-preserving module homomorphisms. Here and later, we use the capital letter ``S" to stand for ``super". For graded rings, we refer to \cite{NVO}.

Let $M$ be a super $R$-module. We denote by $M(\bar{1})$ the \emph{shifted super module}, whose grading is given such that $M(\bar{1})_{\bar{0}}=M_{\bar{1}}$ and $M(\bar{1})_{\bar{1}}=M_{\bar{0}}$. This gives rise to the \emph{degree-shifting automorphism}
$$(\bar{1})\colon R\mbox{-SMod}\longrightarrow R\mbox{-SMod}.$$
Since the square of $(\bar{1})$ equals the identity functor, we obtain a strict $C_2$-action $((\bar{1}), {\rm Id})$ on $R\mbox{-SMod}$.

 Let $(M, \alpha)$ be a $C_2$-equivariant object in $R\mbox{-}{\rm SMod}^{((\bar{1}), {\rm Id})}$, where  $M=M_{\bar{0}}\oplus M_{\bar{1}}$ is  a  super $R$-module and $\alpha\colon M\rightarrow M(\bar{1})$ is an isomorphism satisfying $\alpha(\bar{1})\circ \alpha={\rm Id}_M$.  We define $\Phi(M, \alpha)=M_{\bar{0}}$, which carries a left  $R$-action $*$ as follows. For $a\in R_{\bar{0}}$ and $x\in M_{\bar{0}}$, the action $a_*x$ is given by the original action $ax$ on $M$; for $a\in R_{\bar{1}}$ and $x\in M_{\bar{0}}$, the action $a_*x$ is defined to be $\alpha(ax)$, where $ax$ belongs to $M_{\bar{1}}$. This gives rise to a well-defined functor
 $$\Phi\colon  R\mbox{-}{\rm SMod}^{((\bar{1}), {\rm Id})}\longrightarrow R\mbox{-Mod}, \; (M, \alpha)\longmapsto  M_{\bar{0}}.$$

The following result is contained in \cite[Proposition~5.2]{CCZ}; compare \cite[Theorem~6.4.1]{NVO}.

\begin{prop}\label{prop:SM-M}
Let $R$ be a super ring. Then the functor $\Phi$ above is an equivalence. Moreover, if $2$ is invertible in $R$, it induces an equivalence
 $$\Phi\colon  R\mbox{-}\underline{\rm SMod}^{((\bar{1}), {\rm Id})} \stackrel{\sim}\longrightarrow R\mbox{-}\underline{\rm Mod}.$$
\end{prop}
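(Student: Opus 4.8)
The plan is to construct an explicit quasi-inverse $\Psi\colon R\mbox{-Mod}\to R\mbox{-}{\rm SMod}^{((\bar 1),{\rm Id})}$ and check that $\Phi\Psi$ and $\Psi\Phi$ are naturally isomorphic to the respective identity functors; the second assertion will then follow by verifying that $\Phi$ is exact, sends projectives to projectives, and detects projectives, so that it descends to the stable categories. For the first part, given an $R$-module $N$, I would set $\Psi(N)=N\oplus N$ with $\mathbb{Z}_2$-grading $\Psi(N)_{\bar 0}=N\oplus 0$ and $\Psi(N)_{\bar 1}=0\oplus N$, and define the $R$-action so that $a\in R_{\bar 0}$ acts diagonally by $a\cdot(-)$ while $a\in R_{\bar 1}$ acts by the swap $a\cdot(n,n')=(a n', a n)$; one checks this respects the grading ($R_{\bar 1}\cdot\Psi(N)_{\bar 0}\subseteq \Psi(N)_{\bar 1}$ etc.) and is associative because of the grading constraints on $R$. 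The equivariant structure $\alpha\colon \Psi(N)\to \Psi(N)(\bar 1)$ is the identity on the underlying set (which swaps the two summands as graded pieces), and $\alpha(\bar 1)\circ\alpha={\rm Id}$ is immediate. One then verifies functoriality and that this is a genuine inverse: $\Phi\Psi(N)=\Psi(N)_{\bar 0}=N$ with the twisted action $a_*x=\alpha(ax)$ recovering the original $R$-action on $N$, and conversely $\Psi\Phi(M,\alpha)\cong (M,\alpha)$ via the isomorphism $M_{\bar 0}\oplus M_{\bar 0}\to M_{\bar 0}\oplus M_{\bar 1}$ given by $(x,y)\mapsto(x,\alpha^{-1}(y))$ on the appropriate pieces, using $\alpha(\bar 1)\circ\alpha={\rm Id}$ to see compatibility with actions and with the equivariant structures. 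Since the reference \cite[Proposition~5.2]{CCZ} is cited, I would keep this verification brief.

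For the stable statement, the key point is that the equivalence $\Phi$ of abelian categories restricts to an equivalence of the subcategories of projective objects, hence passes to stable categories; here the hypothesis that $2$ is invertible enters. Concretely, I would invoke Lemma~\ref{lem:en-proj}: $R\mbox{-}{\rm SMod}^{((\bar 1),{\rm Id})}$ has enough projectives, and its projectives are (summands of) objects ${\rm Ind}(P)$ for $P$ a projective super module. One computes $\Phi({\rm Ind}(P))$ and checks it is a projective $R$-module; conversely $\Phi^{-1}=\Psi$ of a projective $R$-module is projective in the equivariant super category — this is where invertibility of $2$ is used, via the separability of the forgetful functor (as in Lemma~\ref{lem:Fro} and the proof of Proposition~\ref{prop:comp}) guaranteeing that $\Psi(N)$ is a direct summand of ${\rm Ind}$ of a projective. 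Since an equivalence of abelian categories matching up the projective subcategories induces an equivalence of stable categories, we conclude.

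Alternatively, and perhaps more cleanly, I would deduce the stable statement from Proposition~\ref{prop:comp} applied to $\mathcal{A}=R\mbox{-SMod}$ with the strict action $((\bar 1),{\rm Id})$: the comparison functor $K\colon \underline{R\mbox{-}{\rm SMod}^{((\bar 1),{\rm Id})}}\to (\underline{R\mbox{-SMod}})^{((\bar 1),{\rm Id})}$ is an equivalence because the target stable category $\underline{R\mbox{-SMod}}$ is idempotent-split (super modules over $R$ are modules over the smash product $R\#\mathbb{Z}_2$, so Lemma~\ref{lem:idem-R} applies, whence $\underline{R\mbox{-}{\rm SMod}^{((\bar 1),{\rm Id})}}$ is idempotent-split too), and $2$ is invertible. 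Combined with the abelian equivalence $\Phi$ from the first part — which is automatically an equivalence on stable categories once one knows it preserves and reflects projectives — one gets the chain $\underline{R\mbox{-}{\rm SMod}^{((\bar 1),{\rm Id})}}\xrightarrow{\sim}R\mbox{-}\underline{\rm Mod}$.

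\textbf{Main obstacle.} The routine-but-delicate point is checking that $\Phi$ (equivalently $\Psi$) is compatible with projective objects on both sides, since this is exactly what licenses passage to the stable categories and is the only place where the hypothesis ``$2$ invertible in $R$'' is needed. Identifying the projectives of $R\mbox{-}{\rm SMod}^{((\bar 1),{\rm Id})}$ explicitly — as summands of induced objects, via the separability argument of \cite[Lemma~4.4]{Chen15} — and tracing them through $\Phi$ is the heart of the matter; everything else is a direct unwinding of definitions.
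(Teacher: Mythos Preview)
Your quasi-inverse construction matches the paper's (your $\Psi(N)=N\oplus N$ is their $\mathcal{E}(X)=Xe_{\bar 0}\oplus Xe_{\bar 1}$, your swap is their ${\rm can}_X$), and your ``alternative'' approach via Proposition~\ref{prop:comp} is exactly the route the paper takes. Two points need correction, though.

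First, your initial approach misidentifies where the invertibility of $2$ enters. An equivalence of abelian categories \emph{always} preserves and reflects projectives, so the equivalence $\underline{R\mbox{-}{\rm SMod}^{((\bar 1),{\rm Id})}}\simeq R\mbox{-}\underline{\rm Mod}$ is automatic from the first part with no hypothesis on $2$; separability plays no role there. But this is not yet the statement: the claimed domain is $(R\mbox{-}\underline{\rm SMod})^{((\bar 1),{\rm Id})}$, and bridging the two is precisely the job of the comparison functor $K$ in Proposition~\ref{prop:comp} --- \emph{that} is the only place $2$ invertible is used. Second, in your alternative approach the step ``$R\mbox{-}\underline{\rm SMod}$ is idempotent-split, whence $\underline{R\mbox{-}{\rm SMod}^{((\bar 1),{\rm Id})}}$ is idempotent-split'' is unjustified as written; Proposition~\ref{prop:comp} requires idempotent-splitness of the \emph{source} of $K$, and that does not follow from idempotent-splitness of $R\mbox{-}\underline{\rm SMod}$ alone. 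The paper's fix is simpler and uses only what you already proved: the abelian equivalence $\Phi$ from the first part identifies $\underline{R\mbox{-}{\rm SMod}^{((\bar 1),{\rm Id})}}$ with $R\mbox{-}\underline{\rm Mod}$, and the latter is idempotent-split by Lemma~\ref{lem:idem-R}. The smash-product detour is unnecessary.
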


\begin{proof}
    We describe a quasi-inverse of $\Phi$. Let $X$ be an ordinary $R$-module. Consider a $\mathbb{Z}_2$-graded abelian group $\mathcal{E}(X)=Xe_{\bar{0}}\oplus Xe_{\bar{1}}$, where $e_{\bar{0}}$ is even and $e_{\bar{1}}$ is odd. It becomes a super $R$-module by the following action: for $a\in R_{\bar{i}}$ and $xe_{\bar{j}}$, we have
$$a (xe_{\bar{j}})= (ax)e_{\bar{i}+\bar{j}}.$$
Moreover, we have a canonical isomorphism
$${\rm can}_X \colon \mathcal{E}(X)\longrightarrow \mathcal{E}(X)(\bar{1})$$
of super $R$-modules, which sends $xe_{\bar{i}}$ to $xe_{\bar{i}+\bar{1}}$. In summary, we have the required $C_2$-equivariant object $\Phi^{-1}(X)=(\mathcal{E}(X), {\rm can}_X)$ in $R\mbox{-}{\rm SMod}^{((\bar{1}), {\rm Id})}$.

  Recall from Lemma~\ref{lem:idem-R} that the stable category  $R\mbox{-\underline{\rm Mod}}$ is idempotent-split.   By the equivalence $\Phi$, so is $\underline{R\mbox{-}{\rm SMod}^{((\bar{1}), {\rm Id})}}$. Since $2$ is invertible in $R$, it is invertible in $R\mbox{-Mod}$. Then the induced equivalence follows from Proposition~\ref{prop:comp}.
\end{proof}

Let $\sigma$ be an automorphism of a ring $A$. For each $A$-module $X$, we have the \emph{twisted $A$-module} $^\sigma(X)$. Its typical element is denoted by $^\sigma(x)$, whose $A$-action is given by
$$a\; {^\sigma(x)}={^\sigma(\sigma(a)x)}.$$
This gives rise to the \emph{twisting autoequivalence} $^\sigma(-)$ on $A\mbox{-Mod}$.

Let $R=R_{\bar{0}}\oplus R_{\bar{1}}$ be a super ring. As an ordinary ring $R$, it carries the \emph{parity automorphism} $$g\colon R\longrightarrow R, \; g(a)=(-1)^i a \mbox{ for } a\in R_{\bar{i}}.$$
This automorphism induces a $C_2$-action $({^g(-)}, u)$ on $R\mbox{-Mod}$. Here, for any ordinary $R$-module $X$, we have the canonical isomorphism
\begin{align}\label{equ:u}
u_X\colon X \stackrel{\sim}\longrightarrow {^g{(^g(X))}}, \; x\longmapsto {^g(^g(x))}.
\end{align}

Let $M=M_{\bar{0}}\oplus M_{\bar{1}}$ be a super $R$-module. We have the \emph{parity isomorphism} on $M$
\begin{align}\label{equ:parity}
\alpha_M \colon M\longrightarrow {^g(M)}, \; m\longmapsto (-1)^i\cdot {^g(m)} \mbox{ for } m\in M_{\bar{i}},
\end{align}
which is an isomorphism between ordinary $R$-modules. Furthermore, the pair $(M, \alpha_M)$ becomes an object in $R\mbox{-{\rm Mod}}^{({^g(-)}, u)}$. The assignment defines a functor
$$\Psi \colon R\mbox{-{\rm SMod}} \longrightarrow R\mbox{-{\rm Mod}}^{({^g(-)}, u)}, \; M\longmapsto (M, \alpha_M).$$

The following result is contained in \cite[Proposition~5.7]{CCZ}.

\begin{prop}\label{prop:M-SM}
    Let $R$ be a super ring. Assume that $2$ is invertible in $R$.  Then the functor $\Psi$ above is an equivalence. Moreover, it induces an equivalence
    $$\Psi\colon  R\mbox{-\underline{\rm SMod}} \stackrel{\sim}\longrightarrow R\mbox{-\underline{\rm Mod}}^{({^g(-)}, u)}. $$
\end{prop}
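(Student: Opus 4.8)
The plan is to mirror the structure of the proof of Proposition~\ref{prop:SM-M}: exhibit an explicit quasi-inverse to $\Psi$ at the level of abelian categories, verify the two composites are isomorphic to the respective identities, and then lift the equivalence to stable categories using the idempotent-splitting machinery already established. Concretely, I would define $\Psi^{-1}\colon R\mbox{-{\rm Mod}}^{({^g(-)}, u)}\rightarrow R\mbox{-{\rm SMod}}$ as follows. Given a $C_2$-equivariant object $(X, \beta)$ with $\beta\colon X\rightarrow {^g(X)}$ satisfying ${^g(\beta)}\circ\beta = u_X$, the isomorphism $\beta$ (transported back to the underlying abelian group of $X$ via the identity map $x\mapsto {^g(x)}$) is an involution $\tilde\beta$ on $X$ with $\tilde\beta^2={\rm Id}$; since $2$ is invertible in $R$, the eigenspace decomposition $X = X^{+}\oplus X^{-}$ with $X^{\pm}=\ker(\tilde\beta\mp{\rm Id})$ is defined via the idempotents $\tfrac12({\rm Id}\pm\tilde\beta)$. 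Setting $M_{\bar 0}=X^{+}$ and $M_{\bar 1}=X^{-}$ gives a $\mathbb{Z}_2$-grading, and one checks from the compatibility $a\,{^g(x)}={^g(g(a)x)}$ together with the intertwining condition $\beta\circ(a\cdot-) = ({^g(a)}\cdot-)\circ\beta$ that $R_{\bar i}\cdot M_{\bar j}\subseteq M_{\overline{i+j}}$, so that $M=M_{\bar 0}\oplus M_{\bar 1}$ is a genuine super $R$-module. On morphisms, an equivariant morphism is exactly a map commuting with $\tilde\beta$, hence one preserving the eigenspace decomposition, i.e.\ a degree-preserving homomorphism; so $\Psi^{-1}$ is well defined and functorial.

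Next I would verify $\Psi^{-1}\circ\Psi\cong{\rm Id}_{R\text{-}{\rm SMod}}$ and $\Psi\circ\Psi^{-1}\cong{\rm Id}$. For the first: starting from a super module $M=M_{\bar 0}\oplus M_{\bar 1}$, the parity isomorphism $\alpha_M$ of (\ref{equ:parity}) corresponds under the identification to the involution acting as $+1$ on $M_{\bar 0}$ and $-1$ on $M_{\bar 1}$, so its $+1$-eigenspace is $M_{\bar 0}$ and its $-1$-eigenspace is $M_{\bar 1}$, recovering the original grading on the nose (the natural isomorphism is the identity). For the second composite, given $(X,\beta)$ one reassembles $X^{+}\oplus X^{-}$ with the reconstructed grading; the natural isomorphism to $(X,\beta)$ is the sum of the two inclusions, and one checks it is equivariant by comparing with the sign pattern of $\alpha$ on the two summands, which matches $\beta$ restricted to the eigenspaces by construction. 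Since these natural isomorphisms are patently natural in $M$ and in $(X,\beta)$, this establishes that $\Psi$ is an equivalence of abelian categories.

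Finally, for the stable version I would argue exactly as in the last paragraph of the proof of Proposition~\ref{prop:SM-M}. By Lemma~\ref{lem:idem-R}, $R\mbox{-}\underline{\rm Mod}$ is idempotent-split; since $2$ is invertible in $R$, it is invertible in $R\mbox{-Mod}$, so Proposition~\ref{prop:comp} (applied to the $C_2$-action $({^g(-)}, u)$ on $\mathcal{A}=R\mbox{-Mod}$) gives that the comparison functor $K\colon \underline{R\mbox{-}{\rm Mod}^{({^g(-)}, u)}}\rightarrow (R\mbox{-}\underline{\rm Mod})^{({^g(-)}, u)}$ is an equivalence up to retracts. One then observes that the abelian equivalence $\Psi$ sends projective objects to projective objects and reflects them (it is an equivalence of abelian categories with enough projectives, and the forgetful functors are compatible with it), so it descends to an equivalence $\underline{\Psi}\colon R\mbox{-}\underline{\rm SMod}\rightarrow \underline{R\mbox{-}{\rm Mod}^{({^g(-)}, u)}}$ between stable categories; transporting idempotent-splitness of $R\mbox{-}\underline{\rm SMod}$ (which in turn follows from that of $R\mbox{-}\underline{\rm Mod}$ via Proposition~\ref{prop:SM-M}) through $\underline{\Psi}$ shows the target of $K$ is idempotent-split, whence $K$ is an equivalence; composing with $\underline{\Psi}$ yields the asserted equivalence $R\mbox{-}\underline{\rm SMod}\xrightarrow{\sim} R\mbox{-}\underline{\rm Mod}^{({^g(-)}, u)}$.

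The main obstacle I anticipate is purely bookkeeping: getting the signs consistent across the three gadgets involved—the parity automorphism $g$, the canonical isomorphism $u_X$ of (\ref{equ:u}), and the parity isomorphism $\alpha_M$ of (\ref{equ:parity})—so that the cocycle condition ${^g(\alpha_M)}\circ\alpha_M=u_M$ holds and, conversely, so that the involution extracted from a general $\beta$ really does square to the identity rather than to $-{\rm Id}$; the invertibility of $2$ is exactly what is needed to split the resulting idempotents, and the argument collapses in characteristic $2$. Since the paper attributes the statement to \cite[Proposition~5.7]{CCZ}, I would also keep the presentation short, citing that reference for the detailed verification of the sign identities and spelling out only the construction of $\Psi^{-1}$ and the reduction to Proposition~\ref{prop:comp}.
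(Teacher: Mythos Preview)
Your approach is essentially the same as the paper's. The paper constructs the quasi-inverse by setting $X_{\bar 0}=\{x\in X\mid \alpha(x)={^g(x)}\}$ and $X_{\bar 1}=\{x\in X\mid \alpha(x)=-{^g(x)}\}$, which are exactly your $\pm 1$-eigenspaces of the transported involution $\tilde\beta$; it then invokes the same reduction to Proposition~\ref{prop:comp} for the stable statement. One small imprecision: your justification that $R\mbox{-}\underline{\rm SMod}$ is idempotent-split ``via Proposition~\ref{prop:SM-M}'' does not quite work as stated, since that proposition concerns $R\mbox{-}\underline{\rm SMod}^{((\bar 1),{\rm Id})}$ rather than $R\mbox{-}\underline{\rm SMod}$ itself; the cleanest fix is to observe that $R\mbox{-SMod}$ is itself (equivalent to) a module category, so Lemma~\ref{lem:idem-R} applies directly.
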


\begin{proof}
We describe a quasi-inverse of $\Psi$. Let $(X, \alpha)$ be an object in $R\mbox{-{\rm Mod}}^{({^g(-)}, u)}$. Set $X_{\bar{0}}=\{x\in X\; |\; \alpha(x)={^g(x)}\}$ and $X_{\bar{1}}=\{x\in X\; |\; \alpha(x)=-{^g(x)}\}$. For each element $y\in X$ with $\alpha(y)={^g(y')}$, we observe that $\frac{1}{2}(y+y')$ belongs to $X_{\bar{0}}$  and that  $\frac{1}{2}(y-y')$ belongs to $X_{\bar{1}}$. We infer that $X=X_{\bar{0}}\oplus X_{\bar{1}}$; moreover, this makes $X$ a super $R$-module. We just set $\Psi^{-1}(X, \alpha)$ to be this super $R$-module. For the induced equivalence, we just apply the same argument as the one in the second paragraph in the proof of Proposition~\ref{prop:SM-M}.
\end{proof}

We mention that, in a certain sense,  the equivalences in Propositions~\ref{prop:SM-M} and \ref{prop:M-SM} are adjoint to each other; see \cite[Remark~5.8]{CCZ} and compare \cite[Theorem~4.4]{DGNO}.

\subsection{} Let $A$ be a ring. A complex $P^\bullet$ of projective $A$-modules is totally acyclic if it is acyclic and the Hom-complex ${\rm Hom}_A(P^\bullet, Q)$ is acyclic for any projective $A$-module $Q$. An $A$-module $G$ is called \emph{Gorenstein projective} \cite{EJ} if there exists a totally acyclic complex $P^\bullet$ whose zeroth cocycle $Z^0(P^\bullet)$ is isomorphic to $G$. Denote by $A\mbox{-GProj}$ the full subcategory of $A\mbox{-Mod}$ formed by Gorenstein projective $A$-modules.

We observe that projective modules are Gorenstein projective. The full subcategory $A\mbox{-GProj}$ is closed under extensions, and thus becomes an exact category in the sense of Quillen \cite{Qui}. Moreover, by \cite[Proposition~3.8]{Bel} it is a Frobenius exact category, whose projective-injective objects are precisely projective $A$-modules. Consequently, by a general result in \cite[I.2]{Hap} its stable category $A\mbox{-\underline{\rm GProj}}$ is canonically triangulated. It is well known that $A\mbox{-\underline{\rm GProj}}$ is closed under direct summands in $A\mbox{-\underline{\rm Mod}}$. It follows from Lemma~\ref{lem:idem-R} that $A\mbox{-\underline{\rm GProj}}$ is idempotent-split.

A totally acyclic complex $P^\bullet$ is \emph{locally-finite} if each component $P^i$ is finitely generated. An $A$-module $G$ is called \emph{totally-reflexive}, if  $G$ is isomorphic to $Z^0(P^\bullet)$ for some locally-finite totally acyclic complex $P^\bullet$; compare \cite{ABr} and \cite[Chapter~8]{LW}. Denote by $A\mbox{-Gproj}$ the full subcategory formed by totally-reflexive $A$-modules. The following fact justifies the notation: if $A$ is left coherent, a Gorenstein-projective $A$-module is totally-reflexive if and only if it is finitely presented; see \cite[Lemma~3.4]{Chen11}.

The category $A\mbox{-Gproj}$  is also a Frobenius exact category, whose projective-injective objects are precisely finitely generated projective $A$-modules. The stable category $A\mbox{-\underline{Gproj}}$ is canonically triangulated, which might viewed as a triangulated subcategory of $A\mbox{-\underline{\rm GProj}}$. In contrast to $A\mbox{-\underline{GProj}}$, the stable category $A\mbox{-\underline{Gproj}}$ is not idempotent-split in general; see \cite[Section~2.5]{Br-Thesis}; compare \cite[the ninth paragraph in p.207]{Orl11}.

The above consideration applies well to super modules. Let $R=R_{\bar{0}}\oplus R_{\bar{1}}$ be a super ring. Denote by $R\mbox{-SGProj}$ the category of \emph{super Gorensten-projective $R$-modules}, and by $R\mbox{-\underline{SGProj}}$ its stable category. Similarly, the category of \emph{super totally-reflexive $R$-modules} is denoted by $R\mbox{-SGproj}$, whose stable category is denoted by $R\mbox{-\underline{SGproj}}$.

\begin{lem}\label{lem:SG}
    Let $M$ be a super $R$-module. Then $M$ is super Gorenstein-projective if and only if it is Gorenstein projective as an ordinary $R$-module.
\end{lem}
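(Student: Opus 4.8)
The plan is to prove both implications directly from the definitions, exploiting the fact that the forgetful functor $R\mbox{-SMod}\to R\mbox{-Mod}$ is exact and sends a (finitely generated) projective super module to a (finitely generated) projective ordinary module, and conversely reflects projectivity. First I would record this auxiliary fact: a super $R$-module is projective in $R\mbox{-SMod}$ if and only if its underlying module is projective in $R\mbox{-Mod}$. One direction is clear since the forgetful functor has the degree-shift $(\bar 1)$ built into a left adjoint $N\mapsto R\otimes_{R_{\bar 0}}N$-type construction (or more simply: $R$ and $R(\bar 1)$ are projective super modules whose underlying modules are free, and every projective super module is a summand of a sum of copies of $R$ and $R(\bar 1)$); the converse follows because if the underlying module of $M$ is projective, then $M$ is a summand of a free super module in $R\mbox{-SMod}$ after noting that any surjection from a free super module onto $M$ splits already in $R\mbox{-SMod}$ (using that the chosen splitting of the underlying surjection can be averaged, or rather that $M$ being a direct summand of $\bigoplus R \oplus \bigoplus R(\bar 1)$ as an ordinary module forces the same as graded modules by comparing homogeneous components). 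I expect this projectivity comparison to be the main technical point, but it is standard for $\mathbb{Z}_2$-graded rings.

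Next, for the ``only if'' direction, suppose $M$ is super Gorenstein-projective, witnessed by a totally acyclic complex $P^\bullet$ of projective super modules with $Z^0(P^\bullet)\cong M$. Applying the forgetful functor, $P^\bullet$ becomes a complex of projective ordinary $R$-modules (by the auxiliary fact); it is still acyclic since forgetting the grading is exact; and for any projective ordinary $R$-module $Q$ we must check ${\rm Hom}_R(P^\bullet,Q)$ is acyclic. Here I would use that $Q$ is a summand of a possibly infinite free module $F=\bigoplus R$, and that ${\rm Hom}_R(P^i,R)\cong {\rm Hom}_{R\mbox{-SMod}}(P^i,R)\oplus {\rm Hom}_{R\mbox{-SMod}}(P^i,R(\bar 1))$ naturally in $P^i$, so the Hom-complex into $R$ decomposes as a direct sum of two Hom-complexes of super modules, each acyclic by total acyclicity of $P^\bullet$ in $R\mbox{-SMod}$; acyclicity is preserved under products and summands, giving the claim for arbitrary projective $Q$. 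Hence $M$ is Gorenstein projective over $R$.

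For the ``if'' direction, suppose the underlying module of $M$ is Gorenstein projective over $R$. The cleanest route is to build a complete resolution of $M$ inside $R\mbox{-SMod}$ from scratch: since $R\mbox{-SMod}$ has enough projectives and $M$ has finite (in fact zero) ``super projective dimension defect,'' one constructs a projective resolution $P^\bullet_{\geq 0}\to M$ in $R\mbox{-SMod}$ and dually a coresolution, and the point is to verify the resulting doubly-infinite complex is totally acyclic. Acyclicity on the resolution side is automatic; on the coresolution side one uses that $M$, being Gorenstein projective as an ordinary module, has ${\rm Ext}^{>0}_R(M,R)=0$, hence ${\rm Ext}^{>0}_{R\mbox{-SMod}}(M,R)=0={\rm Ext}^{>0}_{R\mbox{-SMod}}(M,R(\bar 1))$ by the Hom-decomposition above (and its derived version), so each syzygy in the coresolution is again such a module and the splicing is exact; total acyclicity of the spliced complex then reduces, via the same Hom-decomposition, to the total acyclicity of a complete resolution of the underlying module of $M$, which holds by hypothesis. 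This gives $M\in R\mbox{-SGProj}$, completing the equivalence. The analogous statement for totally-reflexive modules (i.e.\ $M\in R\mbox{-SGproj}$ iff $M\in R\mbox{-Gproj}$) follows from the same argument by additionally tracking finite generation, which is preserved and reflected by the forgetful functor since $R$ is a finitely generated super module over itself.
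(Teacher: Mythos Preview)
Your approach is more hands-on than the paper's, which simply observes that the forgetful functor $U\colon R\mbox{-SMod}\to R\mbox{-Mod}$ and the functor $\mathcal{E}\colon X\mapsto Xe_{\bar 0}\oplus Xe_{\bar 1}$ form a faithful Frobenius pair (each is both left and right adjoint to the other, both faithful) and then invokes a general transfer result for Gorenstein projectivity along such pairs. Your ``only if'' direction is essentially fine; the decomposition $\mathrm{Hom}_R(P^i,R)\cong\mathrm{Hom}_{R\text{-}\mathrm{SMod}}(P^i,R)\oplus\mathrm{Hom}_{R\text{-}\mathrm{SMod}}(P^i,R(\bar 1))$ is correct and handles $Q=R$. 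But the step to arbitrary projective $Q$ via ``products and summands'' does not work as written: $\mathrm{Hom}_R(P^\bullet,\bigoplus R)$ is in general neither a sum nor a product of copies of $\mathrm{Hom}_R(P^\bullet,R)$ when the $P^i$ are infinitely generated. The clean fix is the adjunction itself, $\mathrm{Hom}_R(U(P^\bullet),Q)\cong\mathrm{Hom}_{R\text{-}\mathrm{SMod}}(P^\bullet,\mathcal{E}(Q))$, with $\mathcal{E}(Q)$ projective in $R\mbox{-SMod}$. (Likewise, the ``averaging'' remark in your projectivity comparison would need $2$ invertible, which is not assumed here; the homogeneous-component alternative you also mention is the correct route.)

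The substantive gap is in the ``if'' direction. You write that one ``dually constructs a coresolution'' of $M$ by projective super modules, but this is exactly the nontrivial point: the Ext-vanishing $\mathrm{Ext}^{>0}_{R\text{-}\mathrm{SMod}}(M,P)=0$ for projective $P$ does \emph{not} by itself produce an embedding $M\hookrightarrow P^0$ into a projective super module, and without that first map there is no coresolution whose cosyzygies you can analyze. (Over many rings there exist modules with this Ext-vanishing that fail to be Gorenstein projective.) The repair again uses the Frobenius pair: the unit $\eta\colon M\to \mathcal{E}U(M)$ is split by the counit of the other adjunction, so $M$ is a direct summand of $\mathcal{E}U(M)$ in $R\mbox{-SMod}$; applying the exact functor $\mathcal{E}$ to a totally acyclic complex for $U(M)$ yields one for $\mathcal{E}U(M)$ (total acyclicity via $\mathrm{Hom}_{R\text{-}\mathrm{SMod}}(\mathcal{E}(Q^\bullet),P)\cong\mathrm{Hom}_R(Q^\bullet,U(P))$), and closure under summands finishes. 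Once both directions are patched this way, you have essentially reproduced the paper's Frobenius-functor argument in explicit form.
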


\begin{proof}
    Consider the forgetful functor $U\colon R\mbox{-SMod}\rightarrow R\mbox{-Mod}$. The assignment $X\mapsto \mathcal{E}(X)=Xe_{\bar{0}}\oplus Xe_{\bar{1}}$ in the proof of Proposition~\ref{prop:SM-M} yields a functor $\mathcal{E}\colon R\mbox{-Mod}\rightarrow R\mbox{-SMod}$. It is well known that both $(\mathcal{E}, U)$ and $(U, \mathcal{E})$ are adjoint pairs. Both functors $U$ and $\mathcal{E}$ are faithful. Now, the statement follows from a general result \cite[Theorem~3.2]{Chen-Ren} about faithful Frobenius functors.
\end{proof}

\begin{prop}\label{prop:SGProj-GProj}
    Let $R$ be a super ring. Assume that $2$ is invertible in $R$. Then we have two triangle equivalences
 $$\Phi \colon  R\mbox{-}\underline{\rm SGProj}^{((\bar{1}), {\rm Id})} \stackrel{\sim}\longrightarrow R\mbox{-}\underline{\rm GProj}, \mbox{ and } \Psi \colon R\mbox{-\underline{\rm SGProj}} \stackrel{\sim}\longrightarrow R\mbox{-\underline{\rm GProj}}^{({^g(-)}, u)}.$$
\end{prop}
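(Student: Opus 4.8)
The plan is to deduce Proposition~\ref{prop:SGProj-GProj} from the two equivalences in Propositions~\ref{prop:SM-M} and \ref{prop:M-SM} by checking that the functors $\Phi$ and $\Psi$ restrict appropriately to the Gorenstein-projective subcategories, and then upgrading the resulting equivalences to triangle equivalences via the Frobenius-exact structure. The two halves are essentially parallel, so I would treat them in tandem.

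First I would address $\Psi$. By Proposition~\ref{prop:M-SM} we already have an equivalence $\Psi\colon R\mbox{-SMod}\to R\mbox{-Mod}^{({^g(-)},u)}$, and the induced equivalence $R\mbox{-\underline{SMod}}\stackrel{\sim}\to R\mbox{-\underline{Mod}}^{({^g(-)},u)}$. It remains to see that $\Psi$ carries $R\mbox{-SGProj}$ onto the subcategory of equivariant objects whose underlying module is Gorenstein projective. For this, note that the forgetful functor $R\mbox{-Mod}^{({^g(-)},u)}\to R\mbox{-Mod}$ composed with $\Psi$ is naturally isomorphic to the forgetful functor $U\colon R\mbox{-SMod}\to R\mbox{-Mod}$ (the underlying $R$-module of $(M,\alpha_M)$ is $M$ itself). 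By Lemma~\ref{lem:SG}, a super module $M$ is super Gorenstein-projective precisely when $M$ is Gorenstein projective over the ordinary ring $R$; and by the general theory of equivariantization over an abelian category with enough projectives (Lemma~\ref{lem:en-proj}), together with the fact that $U$ and $\mathrm{Ind}$ preserve and reflect Gorenstein-projectivity (again by \cite[Theorem~3.2]{Chen-Ren}, since $U$ is a faithful Frobenius functor), an equivariant object $(X,\alpha)$ lies in $(R\mbox{-GProj})^{({^g(-)},u)}$ iff $X\in R\mbox{-GProj}$. Hence $\Psi$ restricts to an equivalence between $R\mbox{-SGProj}$ and $R\mbox{-GProj}^{({^g(-)},u)}$, and therefore between their stable categories.

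For $\Phi$ the argument is the same in structure, using Proposition~\ref{prop:SM-M} and the identification $\Phi(M,\alpha)=M_{\bar 0}$, whose quasi-inverse is $X\mapsto(\mathcal{E}(X),{\rm can}_X)$; one checks $M\in R\mbox{-SGProj}$ iff $M_{\bar 0}\in R\mbox{-GProj}$, again reducing to Lemma~\ref{lem:SG} and the faithful-Frobenius-functor result, noting that $\mathcal{E}$ is (up to the equivalence $\Phi^{-1}$) just the induction functor. The remaining point is that these equivalences of stable categories are triangle equivalences. Here I would invoke the Remark following Lemma~\ref{lem:tri}: $R\mbox{-GProj}$ is a Frobenius exact category with an exact $C_2$-action (by $(\bar 1)$ on the super side, or $({^g(-)},u)$), so $R\mbox{-GProj}^{(T,u)}$ is again Frobenius exact and its stable category is triangulated; the equivalences $\Phi$, $\Psi$ are exact functors between Frobenius exact categories sending projectives to projectives (projective super modules correspond under $\Phi$, $\Psi$ to projective modules, using the explicit formulas and the adjunctions $({\rm Ind},U)$, $(U,{\rm Ind})$), hence induce triangle equivalences on stable categories.

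The main obstacle I anticipate is the bookkeeping needed to make precise the claim that $\Phi$ and $\Psi$ are compatible with the Frobenius exact structures and with the comparison/induction functors, so that the abstract machinery of Lemma~\ref{lem:en-proj}, the Remark after Lemma~\ref{lem:tri}, and \cite[Theorem~3.2]{Chen-Ren} can be applied cleanly; none of it is deep, but one must be careful that ``Gorenstein projective'' is detected on underlying modules in exactly the way these lemmas require. Once that is in place, the proposition follows formally, and in fact most of the work has already been done in Propositions~\ref{prop:SM-M}, \ref{prop:M-SM} and Lemma~\ref{lem:SG}.
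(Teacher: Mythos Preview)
Your overall approach coincides with the paper's: restrict the abelian-level equivalences of Propositions~\ref{prop:SM-M} and~\ref{prop:M-SM} to the Gorenstein-projective subcategories via Lemma~\ref{lem:SG}, and then pass to stable categories. The paper carries this out only for $\Phi$, verifying the key observation ($M$ super Gorenstein-projective iff $M_{\bar 0}$ Gorenstein-projective) by an explicit isomorphism $\phi\colon M_{\bar 0}\oplus {^g(M_{\bar 0})}\stackrel{\sim}\to M$ of ordinary $R$-modules, $\phi(x)=x+\alpha(x)$ and $\phi({^g(x)})=x-\alpha(x)$, together with closure of $R\mbox{-GProj}$ under direct summands. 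Your alternative route via the faithful-Frobenius-functor result is equally valid.

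There is, however, one genuine step you elide. After obtaining the restricted equivalence $\Phi\colon R\mbox{-SGProj}^{((\bar 1),{\rm Id})}\stackrel{\sim}\to R\mbox{-GProj}$ of Frobenius exact categories, passing to stable categories yields only $\underline{R\mbox{-SGProj}^{((\bar 1),{\rm Id})}}\stackrel{\sim}\to R\mbox{-\underline{GProj}}$. But the domain in the Proposition is $(R\mbox{-\underline{SGProj}})^{((\bar 1),{\rm Id})}$, and these two categories are \emph{a priori} different: they are linked by the comparison functor $K$ of Proposition~\ref{prop:comp}, which in general is only an equivalence up to retracts. The paper closes this by noting that $R\mbox{-\underline{GProj}}$ is idempotent-split, transporting this along the restricted equivalence to $\underline{R\mbox{-SGProj}^{((\bar 1),{\rm Id})}}$, and then invoking the second clause of Proposition~\ref{prop:comp} to make $K$ a genuine equivalence. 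The same issue arises for $\Psi$: your sentence ``and therefore between their stable categories'' identifies $\underline{R\mbox{-GProj}^{({^g(-)},u)}}$ with $(R\mbox{-\underline{GProj}})^{({^g(-)},u)}$, and this identification is exactly the content of Proposition~\ref{prop:comp} plus the idempotent-splitness check. Your invocation of the Remark after Lemma~\ref{lem:tri} does not supply this, since that Remark only asserts $K$ is a triangle equivalence \emph{up to retracts}.
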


\begin{proof}
    We only prove the equivalence on the left side. Recall  the  equivalence $\Phi$ in Proposition~\ref{prop:SM-M}. We make the following observation: for an object $(M, \alpha)$ in  $R\mbox{-}{\rm SMod}^{((\bar{1}), {\rm Id})}$, the super $R$-module $M$ is super Gorenstein-projective if and only if the $R$-module $\Phi(M, \alpha)=M_{\bar{0}}$ is Gorenstein projective. Then we obtain a restricted equivalence
    \begin{align}\label{equ:Phi-res}
    \Phi\colon R\mbox{-{\rm SGProj}}^{((\bar{1}), {\rm Id})} \stackrel{\sim}\longrightarrow R\mbox{-{\rm GProj}}.
    \end{align}

    We have an isomorphism of ordinary $R$-modules
    $$\phi \colon M_{\bar{0}}\oplus {^g(M_{\bar{0}})}\stackrel{\sim}\longrightarrow M,$$
    which satisfies $\phi(x)=x+\alpha(x)$  and $\phi({^g(x)})=x-\alpha(x)$ for $x\in M_{\bar{0}}$.
    Using Lemma~\ref{lem:SG} and the fact that $R\mbox{-GProj}$ is closed under direct summands, we infer the required observation.

Recall that $R\mbox{-\underline{\rm GProj}}$ is idempotent-split. By the restricted equivalence $\Phi$ above,  so is the stable category $\underline{R\mbox{-SGProj}^{((\bar{1}), {\rm Id})}}$. We apply Proposition~\ref{prop:comp} to
    identify  $\underline{  R\mbox{-{\rm SGProj}}^{((\bar{1}), {\rm Id})}}$ with $R\mbox{-}\underline{\rm SGProj}^{((\bar{1}), {\rm Id})}$. Then we infer the required equivalence $\Phi$.

    It remains to show that $\Phi$ is a triangle functor. By \cite[I.2.8]{Hap}, this follows from the fact that $\Phi$ is induced from an exact functor $
    R\mbox{-{\rm SGProj}}^{((\bar{1}), {\rm Id})} \rightarrow R\mbox{-{\rm GProj}}$.
\end{proof}

In the following remark, we deal with analogous results of Proposition~\ref{prop:SGProj-GProj} for totally-reflexive modules. Since in general,  the stable category of these modules is not idempotent-split, the situation is more subtle.

\begin{rem}
    Keep the same assumptions above. The equivalence (\ref{equ:Phi-res}) restricts further to an equivalence
\begin{align*}
    \Phi\colon R\mbox{-{\rm SGproj}}^{((\bar{1}), {\rm Id})} \stackrel{\sim}\longrightarrow R\mbox{-{\rm Gproj}}.
\end{align*}
However, passing to the stable categories, we obtain the following diagram of functors.
\begin{align}\label{equ:Phi-Gproj}
R\mbox{-\underline{\rm SGproj}}^{((\bar{1}), {\rm Id})}  \stackrel{K}\longleftarrow   \underline{R\mbox{-{\rm SGproj}}^{((\bar{1}), {\rm Id})}}  \stackrel{\underline{\Phi}}\longrightarrow R\mbox{-\underline{\rm Gproj}}.
\end{align}
Here, the comparison functor $K$ in Proposition~\ref{prop:comp} is an equivalence up to retracts, and the induced functor $\underline{\Phi}$ is still an equivalence. In other words, the triangle equivalence $\Phi$ in Proposition~\ref{prop:SGProj-GProj} has to be replaced by the diagram (\ref{equ:Phi-Gproj}) above.

Similarly, the triangle equivalence $\Psi$ in Proposition~\ref{prop:SGProj-GProj} is replaced by the following composition
\begin{align}\label{equ:Psi-Gproj}
R\mbox{-\underline{\rm SGproj}} \stackrel{\underline{\Psi}} \longrightarrow  \underline{R\mbox{-{\rm Gproj}}^{({^g(-)}, u)}} \stackrel{K} \longrightarrow R\mbox{-\underline{\rm Gproj}}^{({^g(-)}, u)},
\end{align}
which consists of an induced equivalence $\underline{\Psi}$ and the comparison functor $K$.
\end{rem}

\section{Projective-module factorizations}\label{sec:PF}

Throughout this section, we fix an nc-triple $(A, \omega, \sigma)$, which consists of  a ring $A$, an element $\omega\in A$ and an automorphism $\sigma$ of $A$ such that
$$\sigma(\omega)=\omega  \mbox{ and } \omega a=\sigma(a) \omega $$
for all $a\in A$. We will relate projective-module factorizations to super Gorenstein-projective modules; see Proposition~\ref{prop:Theta}.

For any $A$-module $M$, we have a functorial homomorphism
$$\omega_M\colon M\longrightarrow {^\sigma(M)}, \; m\longmapsto {^\sigma(\omega m)}.$$
Recall from \cite{Chen24} that a \emph{module factorization} of $\omega$ over $A$ is a quadruple
$$M^\bullet=(M^0, M^1; d_M^0, d_M^1)$$
which consists of two $A$-modules $M^0$ and $M^1$, two homomorphisms $d_M^0\colon M^0 \rightarrow M^1$ and $d_M^1\colon M^1\rightarrow {^\sigma(M^0)}$ satisfying
$$d_M^1\circ d_M^0=\omega_{M^0} \mbox{ and } {^\sigma(d_M^0)}\circ d_M^1=\omega_{M^1}.$$
A morphism $f^\bullet=(f^0, f^1)\colon M^\bullet \rightarrow N^\bullet$ between two module factorizations is given by $A$-module homomorphims $f^i\colon M^i\rightarrow N^i$, which satisfy
$$ d_N^0\circ f^0=f^1\circ d_M^0 \mbox{ and } d_N^1\circ f^1={^\sigma(f^0)}\circ d_M^1.$$
These data form the category $\mathbf{F}(A; \omega)$ of module factorizations.

We fix another automorphism $\tau$ of $A$ satisfying $\tau^2=\sigma$.  For a module factorization $M^\bullet$, we define a new module factorization
$${\rm ST}(M^\bullet) = ({^{{\tau}^{-1}}(M^1)}, {^\tau(M^0)}; {^{\tau^{-1}}(d_M^1)},  {^\tau(d_M^0)}).$$
Here, we identify $^{\tau^{-1}}(^\sigma(M^0))$ with $^\tau(M^0)$, and $^\tau(M^1)$ with $^\sigma({^{\tau^{-1}}(M^1)})$. This gives rise to the \emph{swap-twisting autoequivalence}
\begin{align}\label{equ:ST}
{\rm ST}\colon \mathbf{F}(A; \omega)\longrightarrow \mathbf{F}(A; \omega), \; M^\bullet \longmapsto {\rm ST}(M^\bullet).
\end{align}

Consider the skew polynomial ring $A_1=A[x; \tau]$ in one variable. In particular, we have $xa=\tau(a)x$ for $a\in A$.  The following fact is well known.

\begin{lem}\label{lem:A_1-A}
    Let $V$ be any $A_1$-module. Then we have the following exact sequence of $A_1$-modules.
    $$0\longrightarrow A_1\otimes_A V \stackrel{\partial}  \longrightarrow  A_1\otimes_A {^\tau(V)} \stackrel{\pi} \longrightarrow {^{\tau_1}(V)}\longrightarrow 0$$
    Here, $\partial(1\otimes_A v)=x\otimes_A {^\tau(v)} - 1\otimes_A {^\tau(xv)}$ and $\pi(1\otimes_A {^\tau v})={^{\tau_1}(v)}$. Consequently, the $A_1$-module  $V$ has finite projective dimension if and only if so does the underlying $A$-module $V$.
\end{lem}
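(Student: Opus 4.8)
The plan is to exhibit the displayed three-term sequence explicitly and then verify exactness by elementary arguments, after which the consequence about projective dimension is a short homological deduction. First I would recall that, as a left $A$-module, $A_1 = A[x;\tau]$ is free with basis $\{1, x, x^2, \ldots\}$, so that for any $A$-module $W$ the tensor product $A_1 \otimes_A W$ is the free-as-abelian-group object $\bigoplus_{n\geq 0} x^n \otimes_A W$, with left $A_1$-action determined by $x \cdot (x^n \otimes_A w) = x^{n+1}\otimes_A w$ and $a\cdot(x^n\otimes_A w) = x^n \otimes_A \tau^{-n}(a) w$ for $a \in A$ (using $a x^n = x^n \tau^{-n}(a)$). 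The key observation is that these ``base-changed'' modules are exactly the ones on which multiplication by $x$ is a split injection at the level of abelian groups, so the map $\partial$, which is essentially ``multiply the first copy by $x$ and subtract the correction coming from the $A_1$-module structure on $V$'', has a well-controlled kernel and cokernel.

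The key steps, in order, are: (i) check that $\partial$ and $\pi$ are $A_1$-linear — this is a direct computation using the defining relations $xa = \tau(a)x$ and the twisted-module conventions, and the only subtlety is matching the identifications $^{\tau^{-1}}(^\sigma(-)) = {}^\tau(-)$ etc. that the statement invokes; (ii) check $\pi \circ \partial = 0$, immediate from the formulas since $\pi(x \otimes_A {}^\tau v) - \pi(1 \otimes_A {}^\tau(xv)) = {}^{\tau_1}(v)$ computed in two ways (here one uses that $\pi(x\otimes_A {}^\tau v)$ involves applying the $A_1$-action of $x$ before projecting); (iii) surjectivity of $\pi$, which is clear; (iv) injectivity of $\partial$ and exactness in the middle — for this I would argue by a filtration/degree argument: writing an element of $A_1 \otimes_A {}^\tau(V)$ as $\sum_{n=0}^N x^n \otimes_A {}^\tau(v_n)$, the leading term of $\partial(\sum x^m \otimes_A u_m)$ in the $x$-degree is $x^{M+1}\otimes_A {}^\tau(u_M)$, which forces injectivity; and a matching bookkeeping argument, solving for the coefficients recursively from the top degree down, shows any element killed by $\pi$ lies in the image of $\partial$. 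In effect this is the standard ``Koszul-type'' resolution of $V$ over $A_1$ as a module over $A_1 \otimes_A A_1$, restricted along the inclusion; one could alternatively cite that $A_1$ is free over $A$ on both sides and invoke a change-of-rings long exact sequence, but the hands-on version is cleaner here.

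For the final ``consequently'' clause: if the underlying $A$-module $V$ has finite projective dimension, then so do ${}^\tau(V)$ (twisting by an automorphism preserves projective dimension) and hence $A_1 \otimes_A V$ and $A_1 \otimes_A {}^\tau(V)$ as $A_1$-modules, since $A_1$ is projective (even free) as a right $A$-module so $A_1 \otimes_A -$ sends a finite projective $A$-resolution to a finite projective $A_1$-resolution. The exact sequence then exhibits ${}^{\tau_1}(V)$ as having projective dimension at most $1 + \mathrm{pd}_A(V) < \infty$, and since $\tau_1$ is an automorphism of $A_1$, $V$ itself has finite projective dimension over $A_1$. Conversely, if $V$ has finite projective dimension over $A_1$, restrict a finite projective $A_1$-resolution along $A \hookrightarrow A_1$: since $A_1$ is free as a left $A$-module, projective $A_1$-modules restrict to projective $A$-modules, giving $\mathrm{pd}_A(V) < \infty$.

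The main obstacle I anticipate is purely notational rather than conceptual: the statement uses several twisted-module functors $^\tau(-)$, $^{\tau^{-1}}(-)$, $^{\tau_1}(-)$ together with the canonical identifications among them, and one must be scrupulous about where $\tau$ versus $\tau^{-1}$ appears — in particular the formula $\partial(1\otimes_A v) = x\otimes_A {}^\tau(v) - 1 \otimes_A {}^\tau(xv)$ only makes sense once one fixes that $xv$ denotes the $A_1$-action of $x$ on $v \in V$ landing back in $V$, and that the identification $A_1\otimes_A {}^\tau(V) \cong A_1 \otimes_A {}^\sigma({}^{\tau^{-1}}(V))$ is the one implicitly used. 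Once the conventions are pinned down, every verification is a one-line check, and the projective-dimension statement follows formally.
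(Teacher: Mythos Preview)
Your proposal is correct and follows essentially the same elementary strategy as the paper. The only difference is packaging: where you argue injectivity of $\partial$ and middle exactness by a leading-term/recursive-coefficient argument, the paper simply writes down an explicit abelian-group splitting --- a section $s\colon {}^{\tau_1}(V)\to A_1\otimes_A{}^\tau(V)$ sending ${}^{\tau_1}(v)\mapsto 1\otimes_A{}^\tau(v)$, and a retraction $r\colon A_1\otimes_A{}^\tau(V)\to A_1\otimes_A V$ sending $x^i\otimes_A{}^\tau(v)\mapsto \sum_{j=0}^{i-1} x^j\otimes_A x^{i-1-j}v$ (and $1\otimes_A{}^\tau(v)\mapsto 0$). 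Your ``solve recursively from the top degree down'' is exactly the procedure that produces this $r$, so the two arguments are the same in content; the paper's formulation is just shorter. Your treatment of the ``consequently'' clause is more detailed than the paper's (which leaves it implicit) and is fine as written.
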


\begin{proof}
    The sequence above splits as a sequence of abelian groups. The map $\pi$ has a section $s$, which sends ${^{\tau_1}(v)}$ to $1\otimes_A {^\tau(v)}$. The map $\partial$ admits a retract $r$, which sends $x^i\otimes_A {^\tau(v)}$ to $\sum_{j=0}^{i-1} x^{j}\otimes_A x^{i-1-j}v$ for $i\geq 1$, and sends $1\otimes_A {^\tau(v)}$ to $0$.
\end{proof}

Consider the quotient ring $A_1/(x^2-\omega)$, which is  a \emph{noncommutative double branched cover} of $A$; see \cite{Kno, MU21}. It becomes a super ring such that the element $x$ is odd, and its even part is identified with $A$.

For a module factorization $M^\bullet$, we define a super module $\Theta(M^\bullet)$ over $A_1/(x^2-\omega)$ as follows.  As a super $A$-module, we have
\begin{align}\label{equ:Theta}
\Theta(M^\bullet)=M^0\oplus {^{\tau^{-1}}(M^1)},
\end{align}
where $M^0$ is the even part and $ {^{\tau^{-1}}(M^1)}$ is the odd part. The action of $x$ on it is defined such that
$$x m^0={^{\tau^{-1}}(d_M^0(m^0))} \mbox{ and } x {^{\tau^{-1}}(m^1)}=m'$$
for $m^0\in M^0$, $m^1\in M^1$ and $d_M^1(m^1)={^\sigma(m')}$ with $m'\in M^0$. We mention that $x^2-\omega$ vanishes on $\Theta(M^\bullet)$. Therefore, the super $A_1/(x^2-\omega)$-module is well defined. This gives rise to a functor
$$\Theta\colon \mathbf{F}(A; \omega)\longrightarrow A_1/(x^2-\omega)\mbox{-SMod}.$$

The idea of the following result goes back to \cite[Proposition~2.1]{Kno}.

\begin{lem}
The functor $\Theta$ above is an equivalence.
\end{lem}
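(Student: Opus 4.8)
The plan is to show $\Theta$ is an equivalence by exhibiting an explicit quasi-inverse, mimicking the classical argument of Kn\"orrer \cite[Proposition~2.1]{Kno} but keeping track of the twists coming from $\tau$ and $\sigma$. First I would construct the candidate inverse functor $\Xi$. Given a super $A_1/(x^2-\omega)$-module $N = N_{\bar 0}\oplus N_{\bar 1}$, multiplication by $x$ maps $N_{\bar 0}\to N_{\bar 1}$ and $N_{\bar 1}\to N_{\bar 0}$, and since $xa = \tau(a)x$ in $A_1$, the map $x\cdot -\colon N_{\bar 0}\to N_{\bar 1}$ is not $A$-linear but becomes $A$-linear after a twist: it factors as $N_{\bar 0}\to {}^{\tau}(N_{\bar 1})$, or equivalently ${}^{\tau^{-1}}(N_{\bar 0})\to N_{\bar 1}$. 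Matching this against the shape (\ref{equ:Theta}), I set $M^0 = N_{\bar 0}$, $M^1 = {}^{\tau}(N_{\bar 1})$, with $d_M^0\colon M^0\to M^1$ induced by $x\cdot -$ and $d_M^1\colon M^1 = {}^{\tau}(N_{\bar 1})\to {}^{\sigma}(N_{\bar 0}) = {}^{\sigma}(M^0)$ induced by $x\cdot -$ on $N_{\bar 1}\to N_{\bar 0}$ after the appropriate twist (using $\tau^2=\sigma$ to identify ${}^{\tau}({}^{\tau}(N_{\bar 0}))$ with ${}^{\sigma}(N_{\bar 0})$). The relation $x^2 = \omega$ on $N$ translates precisely into $d_M^1\circ d_M^0 = \omega_{M^0}$ and ${}^{\sigma}(d_M^0)\circ d_M^1 = \omega_{M^1}$, so $\Xi(N) = M^\bullet$ is a genuine module factorization.

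Next I would verify functoriality of $\Xi$: a degree-preserving homomorphism $N\to N'$ of super modules restricts to $A$-linear maps on even and odd parts that commute with multiplication by $x$, and these are exactly the conditions defining a morphism of module factorizations after applying the twists. Then I would check $\Xi\circ\Theta \cong \mathrm{Id}_{\mathbf{F}(A;\omega)}$ and $\Theta\circ\Xi\cong \mathrm{Id}$. For the first composite, starting from $M^\bullet$, the super module $\Theta(M^\bullet)$ has even part $M^0$ and odd part ${}^{\tau^{-1}}(M^1)$, so $\Xi(\Theta(M^\bullet))$ has $M^0$ in degree $0$ and ${}^{\tau}({}^{\tau^{-1}}(M^1)) = M^1$ in degree $1$; unwinding the definitions of the $x$-action in (\ref{equ:Theta}) shows the differentials are recovered, and one checks these identifications are natural in $M^\bullet$. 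For the second composite, one verifies directly that the super module structure is reconstructed. All of these are bookkeeping once the twist identifications are pinned down consistently, so I would state them carefully (specifying which canonical isomorphisms ${}^{\tau}\circ{}^{\tau} \cong {}^{\sigma}$, ${}^{\tau^{-1}}\circ{}^{\sigma}\cong{}^{\tau}$, etc.\ are used) and then leave the verifications as routine.

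The main obstacle is purely notational: keeping the twisting autoequivalences ${}^{\tau}(-)$, ${}^{\tau^{-1}}(-)$, ${}^{\sigma}(-)$ and their composites coherent throughout, so that the defining equations of a module factorization correspond on the nose to $x^2 = \omega$ and the skew-commutation $xa = \tau(a)x$. In particular one must be careful that the two relations $d_M^1 d_M^0 = \omega_{M^0}$ and ${}^{\sigma}(d_M^0) d_M^1 = \omega_{M^1}$ both fall out of the single relation $x^2 = \omega$ read on the even and odd parts respectively, which forces a specific choice of how ${}^{\tau}({}^{\tau}(-))$ is identified with ${}^{\sigma}(-)$ (it should be the identification already fixed implicitly in the definition of $\mathrm{ST}$ and of $\Theta$ in the excerpt). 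Once that convention is fixed, $\Xi$ and the two natural isomorphisms are immediate, and the lemma follows. I would also remark that this equivalence visibly restricts to projective-module factorizations on the one side and to the relevant super modules on the other, foreshadowing Proposition~\ref{prop:Theta}.
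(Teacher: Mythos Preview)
Your proposal is correct and follows essentially the same approach as the paper: the paper also proves the lemma by exhibiting an explicit quasi-inverse $\Theta^{-1}$ sending a super module $V=V_{\bar 0}\oplus V_{\bar 1}$ to the module factorization $(V_{\bar 0},\,{}^{\tau}(V_{\bar 1});\,d^0,\,d^1)$ with $d^0(v_0)={}^{\tau}(xv_0)$ and $d^1({}^{\tau}(v_1))={}^{\sigma}(xv_1)$, which is exactly your $\Xi$. The paper is terser---it simply writes down $\Theta^{-1}$ and leaves the verifications implicit---so your more detailed discussion of the twist bookkeeping and the two composite isomorphisms is a welcome elaboration of the same argument.
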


\begin{proof}
    We only describe a quasi-inverse of $\Theta$. Let $V=V_{\bar{0}}\oplus V_{\bar{1}}$ be a super module over $A_1/(x^2-\omega)$. In particular, both $V_{\bar{0}}$ and $V_{\bar{1}}$ are naturally $A$-modules. We have a module factorization of $\omega$ as follows:
    $$\Theta^{-1}(V)=(V_{\bar{0}}, {^\tau(V_{\bar{1}})}; d^0, d^1)$$
    where $d^0(v_0)={^\tau(xv_0)}$ and $d^1({^\tau(v_1)})={^\sigma(xv_1)}$ for $v_i\in V_{\bar{i}}$.
\end{proof}

For a module factorization $M^\bullet$, we have a canonical isomorphism
\begin{align}\label{equ:c}
c_{M^\bullet}\colon M^\bullet \longrightarrow {\rm ST}\circ {\rm ST}(M^\bullet)
\end{align}
by identifying $M^0$ with ${^{\tau^{-1}}({^\tau(M^0)})}$, $M^1$ with ${^\tau({^{\tau^{-1}}(M^0)})}$.
This endows $\mathbf{F}(A; \omega)$ with a $C_2$-action $({\rm ST}, c)$.

On the other hand, the category  $A_1/(x^2-\omega)\mbox{-SMod}$ carries a strict $C_2$-action $((\bar{1}), {\rm Id})$ given by the degree-shifting endofunctor. We have a canonical isomorphism
\begin{align}\label{equ:xi}
\xi_{M^\bullet} \colon \Theta \circ {\rm ST}(M^\bullet) \stackrel{\sim}\longrightarrow \Theta(M^\bullet)(\bar{1}),
\end{align}
by identifying ${^{\tau^{-1}}(^\tau(M^0))}$ with $M^0$.

\begin{lem}\label{lem:xi}
  Consider the two $C_2$-actions above. Then the pair $(\Theta; \xi)$ is a $C_2$-equivariant equivalence from $\mathbf{F}(A; \omega)$ to $A_1/(x^2-\omega)\mbox{-}{\rm SMod}$.
\end{lem}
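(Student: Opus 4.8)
The plan is to lean on the preceding lemma, which already gives that the underlying functor $\Theta$ is an equivalence; it then remains only to promote $(\Theta,\xi)$ to a $C_2$-equivariant functor with respect to the $C_2$-actions $({\rm ST},c)$ on $\mathbf{F}(A;\omega)$ and $((\bar{1}),{\rm Id})$ on $A_1/(x^2-\omega)\mbox{-SMod}$, since an equivariant functor whose underlying functor is an equivalence is by definition a $C_2$-equivariant equivalence. Unwinding the definition of a $C_2$-equivariant functor from Section~2, I would check three things: that each $\xi_{M^\bullet}$ is an isomorphism in $A_1/(x^2-\omega)\mbox{-SMod}$, and not merely of underlying super $A$-modules; that $\xi=(\xi_{M^\bullet})$ is natural in $M^\bullet$; and that the coherence identity
\[
(\bar{1})(\xi_{M^\bullet})\circ\xi_{{\rm ST}(M^\bullet)}\circ\Theta(c_{M^\bullet})={\rm Id}_{\Theta(M^\bullet)}
\]
holds, the right-hand side being the identity because the target $C_2$-action is strict.

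First I would unwind $\Theta$ and ${\rm ST}$ on underlying super $A$-modules. Writing $N^\bullet={\rm ST}(M^\bullet)$, the super module $\Theta(N^\bullet)$ has even part ${}^{\tau^{-1}}(M^1)$ and odd part ${}^{\tau^{-1}}({}^{\tau}(M^0))$, while $\Theta(M^\bullet)(\bar{1})$ has even part ${}^{\tau^{-1}}(M^1)$ and odd part $M^0$; by construction $\xi_{M^\bullet}$ is the identity on the common even part and the canonical identification ${}^{\tau^{-1}}({}^{\tau}(M^0))\cong M^0$ on the odd parts, so it is automatically a degree-preserving isomorphism of $A$-modules. The real content is that it commutes with the action of $x$: I would compute $x\cdot(-)$ on $\Theta({\rm ST}(M^\bullet))$ from the formula defining $\Theta$ applied to the quadruple $({}^{\tau^{-1}}(M^1),{}^{\tau}(M^0);{}^{\tau^{-1}}(d_M^1),{}^{\tau}(d_M^0))$, compute $x\cdot(-)$ on $\Theta(M^\bullet)(\bar{1})$ from the $x$-action on $\Theta(M^\bullet)$, and match the two, invoking the factorization relations $d_M^1\circ d_M^0=\omega_{M^0}$ and ${}^{\sigma}(d_M^0)\circ d_M^1=\omega_{M^1}$ and the canonical identifications ${}^{\tau^{-1}}({}^{\sigma}(M^0))={}^{\tau}(M^0)$ and ${}^{\tau}(M^1)={}^{\sigma}({}^{\tau^{-1}}(M^1))$ built into the definition of ${\rm ST}$. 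Naturality in $M^\bullet$ is then immediate, since ${\rm ST}$, the shift $(\bar{1})$ and $\Theta$ all act componentwise on morphisms, so the naturality square reduces to an equality of $A$-module maps on each homogeneous component. For the coherence identity I would unwind $c_{M^\bullet}\colon M^\bullet\to{\rm ST}^2(M^\bullet)$, given by the canonical identifications $M^0\cong{}^{\tau^{-1}}({}^{\tau}(M^0))$ and $M^1\cong{}^{\tau}({}^{\tau^{-1}}(M^1))$, apply $\Theta$, and then compose with $\xi_{{\rm ST}(M^\bullet)}$ and with $(\bar{1})(\xi_{M^\bullet})$: on each homogeneous component this becomes a composite of evident canonical isomorphisms among iterated $\tau$-twists of $M^0$ and of $M^1$, which collapses to the identity.

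I expect the only genuine obstacle to be bookkeeping: keeping careful track of the automorphisms $\tau$, $\tau^{-1}$ and $\sigma=\tau^2$, and of the several canonical identifications of iterated twisted modules, so that the action of $x$ is genuinely preserved by $\xi$ rather than merely up to a sign or a twist. Conceptually there is nothing to it --- under $\Theta$ a factorization $M^\bullet$ becomes the super module whose even and odd parts are twists of $M^0$ and $M^1$, and ${\rm ST}$ interchanges the two components, which is exactly the effect of the degree shift $(\bar{1})$ on the super side --- but the element-level verification must be carried out with care.
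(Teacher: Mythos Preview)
Your proposal is correct and follows essentially the same approach as the paper: the paper's proof consists of a single sentence stating that it suffices to verify the composite $\xi_{M^\bullet}(\bar{1})\circ\xi_{{\rm ST}(M^\bullet)}\circ\Theta(c_{M^\bullet})$ is the identity, which is exactly your coherence identity. You are more thorough in also flagging that $\xi_{M^\bullet}$ must be checked to intertwine the $x$-action and to be natural --- the paper leaves these implicit --- though in that verification you will find the factorization relations $d^1\circ d^0=\omega$ are not actually needed; only the canonical identifications of iterated twists are.
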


\begin{proof}
    It suffices to verify that the following composition
    $$\Theta(M^\bullet) \xrightarrow{\Theta(c_{M^\bullet})} \Theta \circ {\mathrm{ST}}\circ {\rm ST}(M^\bullet) \xrightarrow{\xi_{{\rm ST}(M^\bullet)}}  (\Theta\circ {\rm ST}(M^\bullet))(\bar{1}) \xrightarrow{\xi_{M^\bullet}(\bar{1})} \Theta(M^\bullet)  $$
    is the identity morphism.
\end{proof}

By a \emph{projective-module factorization} $P^\bullet$, we mean a module factorization whose components $P^0$ and $P^1$ are both projective $A$-modules; see \cite{Dyck, Chen24}. Such factorizations form a full subcategory $\mathbf{PF}(A; \omega)$. It is closed under extensions in $\mathbf{F}(A; \omega)$, and thus becomes an exact category. Moreover, it is a Frobenius exact category, whose projective-injective objects are given by certain \emph{trivial} projective-module factorizations; compare \cite[Proposition~3.11]{Chen24}. The corresponding stable category  is denoted by $\underline{\mathbf{PF}}(A; \omega)$. We mention that the suspension functor $\Sigma$ on $\underline{\mathbf{PF}}(A; \omega)$ sends $P^\bullet$ to
\begin{align}\label{equ:Sigma}
    \Sigma(P^\bullet)=(P^1, {^\sigma(P^0)}; -d^1,-{^\sigma(d^0)}).
\end{align}
Here, the minus sign is consistent with the one appearing in the suspension functor for complexes.

Following \cite{CCKM, MU21}, a \emph{matrix factorization} $P^\bullet$ we mean a projective-module factorization, whose components $P^i$ are both finitely generated. They form a Frobenius exact category ${\mathbf{MF}}(A; \omega)$, whose stable category is denoted by $\underline{\mathbf{MF}}(A; \omega)$.

\begin{prop}\label{prop:Theta}
The equivalence $\Theta$ above restricts to an equivalence
$$\Theta\colon \mathbf{PF}(A; \omega)\stackrel{\sim}\longrightarrow A_1/{(x^2-\omega)}\mbox{-}{\rm SGProj}^{\rm fpd},$$
which further induces a triangle equivalence between the stable categories
$$\Theta\colon \underline{\mathbf{PF}}(A; \omega)\stackrel{\sim}\longrightarrow A_1/{(x^2-\omega)}\mbox{-}\underline{\rm SGProj}^{\rm fpd}.$$
\end{prop}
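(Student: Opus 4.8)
The plan is to upgrade the abelian equivalence $\Theta\colon \mathbf{F}(A;\omega)\xrightarrow{\sim} A_1/(x^2-\omega)\mbox{-SMod}$ to the stated equivalence by identifying, on both sides, the relevant Frobenius exact subcategories and then passing to stable categories. First I would show that $\Theta$ restricts to an equivalence $\mathbf{PF}(A;\omega)\xrightarrow{\sim} A_1/(x^2-\omega)\mbox{-SGProj}^{\mathrm{fpd}}$ at the level of exact categories. Given a projective-module factorization $P^\bullet=(P^0,P^1;d^0,d^1)$, splicing the maps $d^0,d^1$ (suitably twisted) produces a doubly-infinite complex of projective $A$-modules which is acyclic because $d^1d^0=\omega_{P^0}$ and $^\sigma(d^0)d^1=\omega_{P^1}$ are, up to twist, multiplication by the regular-enough element $\omega$; its zeroth cocycle is $\mathrm{Cok}^0(P^\bullet)$. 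The super module $\Theta(P^\bullet)=P^0\oplus{}^{\tau^{-1}}(P^1)$ is a projective $A$-module (as an $A$-module) on which $x$ acts, and Lemma~\ref{lem:SG} says super Gorenstein-projectivity is just Gorenstein-projectivity over the ordinary ring $A_1/(x^2-\omega)$. So I must check two things: (i) $\Theta(P^\bullet)$ is Gorenstein projective over $A_1/(x^2-\omega)$ with underlying $A$-module of finite projective dimension (in fact projective), and (ii) conversely every super Gorenstein-projective $A_1/(x^2-\omega)$-module with finite-projective-dimension underlying $A$-module is projective over $A$ and hence in the image. Point (i)'s ``fpd'' clause is immediate since the underlying $A$-module of $\Theta(P^\bullet)$ is $P^0\oplus{}^{\tau^{-1}}(P^1)$, which is projective; the Gorenstein-projectivity amounts to building the complete resolution over $A_1/(x^2-\omega)$, which one does by the standard Eisenbud-style $2$-periodic construction using the pair $(d^0,d^1)$ together with the $A_1$-structure. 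For point (ii), given such a super module $V=V_{\bar 0}\oplus V_{\bar 1}$, apply the quasi-inverse $\Theta^{-1}(V)=(V_{\bar 0},{}^\tau(V_{\bar 1});d^0,d^1)$ from the Lemma: I need $V_{\bar 0}$ and $V_{\bar 1}$ to be projective $A$-modules, which should follow because a Gorenstein-projective $A_1/(x^2-\omega)$-module whose restriction to $A$ has finite projective dimension actually has projective $A$-restriction — this is the ``fpd forces projective'' phenomenon for Gorenstein-projective modules, and over the regular-in-$\omega$ setup it is exactly what the superscript ${}^{\mathrm{fpd}}$ is designed to capture.

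Once the exact-category equivalence is in place, I would record that it is an exact functor between Frobenius exact categories which, moreover, sends projective-injective objects to projective-injective objects: the trivial projective-module factorizations on one side correspond under $\Theta$ precisely to the projective $A_1/(x^2-\omega)$-modules (these being the induced modules $A_1/(x^2-\omega)\otimes_A P$ for $P$ projective over $A$), which are exactly the projective-injective objects of $A_1/(x^2-\omega)\mbox{-SGProj}^{\mathrm{fpd}}$. This identification of the ``trivial'' objects with induced modules is the one genuinely computational check, but it is routine from the explicit formulas \eqref{equ:Theta} and the shape of the trivial factorizations. A Frobenius-exact equivalence that preserves projective-injectives induces a triangle equivalence between the stable categories by \cite[I.2.8]{Hap}; this gives the displayed triangle equivalence $\underline{\mathbf{PF}}(A;\omega)\xrightarrow{\sim} A_1/(x^2-\omega)\mbox{-}\underline{\mathrm{SGProj}}^{\mathrm{fpd}}$.

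The main obstacle I anticipate is point (ii) above, namely verifying that super Gorenstein-projectivity over $A_1/(x^2-\omega)$ together with the finite-projective-dimension condition on the $A$-restriction is *equivalent* to being in the image of $\mathbf{PF}(A;\omega)$ — i.e., that the two sides of the claimed restricted equivalence match on the nose rather than merely one being contained in the other. Concretely, one direction needs: if $V$ is Gorenstein projective over $A_1/(x^2-\omega)$ and $V|_A$ has finite projective dimension, then $V|_A$ is projective and the complete resolution of $V$ can be taken componentwise $A$-projective, so that $\Theta^{-1}(V)$ genuinely lies in $\mathbf{PF}(A;\omega)$. I would prove this using Lemma~\ref{lem:A_1-A}-type dévissage together with the fact that over $A_1/(x^2-\omega)$ a module is Gorenstein projective iff it is a cocycle of a totally acyclic complex of projectives, combined with the observation that finite projective dimension plus membership in a totally acyclic complex forces projectivity (a module that is simultaneously Gorenstein projective and of finite projective dimension is projective). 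The rest — the explicit definitions of $\Theta,\Theta^{-1}$, the trivial objects, and the suspension formula \eqref{equ:Sigma} matching the cocycle suspension — is bookkeeping that I would relegate to a couple of sentences.
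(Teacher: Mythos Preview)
Your overall strategy—show that $\Theta$ restricts to the indicated subcategories, then pass to stable categories via the Frobenius structure—is correct, and your treatment of the stable passage matches the paper's. The gap is in the restriction step, where both of your directions (i) and (ii) hinge on transferring Gorenstein-projectivity between $\overline{A_1}=A_1/(x^2-\omega)$ and $A$, and neither sketch closes this. For (ii) you need: if $V$ is Gorenstein-projective over $\overline{A_1}$ with $V|_A$ of finite projective dimension, then $V|_A$ is projective. You invoke ``GP plus fpd implies projective'', but that fact applies over a single ring, and you have not shown $V|_A$ is Gorenstein-projective \emph{over $A$}. The restriction to $A$ of a totally acyclic complex of $\overline{A_1}$-projectives is a priori only an acyclic complex of $A$-projectives, and the d\'evissage of Lemma~\ref{lem:A_1-A} concerns $A_1$, not $\overline{A_1}$. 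For (i), your Eisenbud-style $2$-periodic construction over $\overline{A_1}$ is not spelled out (what are the projective $\overline{A_1}$-modules in the complex, and why is it \emph{totally} acyclic?), and your aside about acyclicity holding because $\omega$ is ``regular-enough'' is unfounded: $\omega$ is not assumed regular in this proposition.

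The paper dispatches both directions with one observation you are missing: the inclusion $A\hookrightarrow\overline{A_1}$ is a \emph{split Frobenius extension} (indeed $\overline{A_1}\cong A\oplus Ax$ as an $A$-bimodule), and by a general result on faithful Frobenius functors \cite[Theorem~3.2 and Example~3.8]{Chen-Ren} an $\overline{A_1}$-module is Gorenstein-projective if and only if its underlying $A$-module is. Combined with Lemma~\ref{lem:SG}, this reduces the whole claim to: the $A$-module $M^0\oplus{}^{\tau^{-1}}(M^1)$ is Gorenstein-projective of finite projective dimension iff it is projective iff both $M^i$ are projective—exactly the standard fact you already quote. No explicit complete resolution or d\'evissage is needed.
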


\begin{proof}
Write $\overline{A_1}=A_1/{(x^2-\omega)}$. It suffices to prove the following claim:  $\Theta(M^\bullet)$ belongs to $\overline{A_1}\mbox{-GProj}^{\rm fpd}$ if and only if both $A$-modules $M^i$ are projective.

By Lemma~\ref{lem:SG}, $\Theta(M^\bullet)$ belongs to $\overline{A_1}\mbox{-SGProj}$ if and only if the ordinary $\overline{A_1}$-module $\Theta(M^\bullet)$ is Gorenstein-projective. The inclusion $A\hookrightarrow \overline{A_1}$ is a split Frobenius extension. By the general result in \cite[Theorem~3.2 and Example~3.8]{Chen-Ren}, the ordinary $\overline{A_1}$-module $\Theta(M^\bullet)$ is Gorenstein-projective if and only if the underlying $A$-module $\Theta(M^\bullet)=M^0\oplus {^{\tau^{-1}}(M^1)}$ is Gorenstein-projective. Recall the well-known fact that any Gorenstein projective $A$-module with finite projective dimension is necessarily projective. Combining the above facts, we infer that  $\Theta(M^\bullet)$ belongs to $\overline{A_1}\mbox{-SGProj}^{\rm fpd}$ if and only if the underlying $A$-module $\Theta(M^\bullet)=M^0\oplus {^{\tau^{-1}}(M^1)}$ is projective. The claim follows immediately.
\end{proof}

\begin{rem}
    The triangle equivalence $\Theta$ above restricts to a triangle equivalence
    $$\underline{\mathbf{MF}}(A; \omega) \stackrel{\sim}\longrightarrow \overline{A_1} \mbox{-}\underline{\rm SGproj}^{\rm fpd}.$$
    Here, $\overline{A_1} \mbox{-}{\rm SGproj}^{\rm fpd}$ denotes the category of totally-reflexive $\overline{A_1}$-modules whose underlying $A$-modules have finite projective dimension.
\end{rem}

\section{The scalar-extension functor} \label{sec:SE}

In this section,  we recall the scalar-extension functor ${\rm SE}$ \cite{Kno, Yos} from Gorenstein-projective modules to projective-module factorizations. The central result is Proposition~\ref{prop:G-PF}, which  claims that ${\rm SE}$ is  $C_2$-equivariant.

We recall the fixed nc-triple $(A, \omega, \sigma)$ and the automorphism $\tau$.  Consider $A_1=A[x; \tau]$ and a new nc-triple $(A_1, x^2-\omega, \sigma_1)$. Here, $\sigma_1$ is the unique automorphism of $A_1$ which extends $\sigma$ and fixes $x$.

Consider a projective-module factorization $P$ of $x^2-\omega$ over $A_1$. Set ${\rm Cok}^0(P^\bullet)$ to be the cokernel of the morphism $d_P^0\colon P^0\rightarrow P^1$. Since $x^2-\omega$ vanishes on  ${\rm Cok}^0(P^\bullet)$, it becomes a module over $A_1/{(x^2-\omega)}$. This gives rise to the (zeroth) \emph{ cokernel functor}
$${\rm Cok}^0\colon \mathbf{PF}(A_1; x^2-\omega)\longrightarrow A_1/{(x^2-\omega)}\mbox{-Mod}.$$

\begin{thm}\label{thm:Eis}
    The above cokernel functor induces a triangle equivalence
    $${\rm Cok}^0\colon \underline{\mathbf{PF}}(A_1; x^2-\omega) \stackrel{\sim}\longrightarrow A_1/{(x^2-\omega)}\mbox{-}\underline{\rm GProj}^{\rm fpd}.$$
\end{thm}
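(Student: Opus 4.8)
The statement is the classical Eisenbud-type equivalence, here in the noncommutative, possibly-infinite (projective, not finitely generated) setting for the specific potential $x^2-\omega$ over the skew polynomial ring $A_1$. I would prove it by combining the results already assembled in the paper rather than reproving matrix-factorization homological algebra from scratch. The key observation is that Theorem~\ref{thm:Eis} for the nc-triple $(A_1, x^2-\omega, \sigma_1)$ is the special case of Proposition~\ref{prop:Theta} \emph{and} Proposition~\ref{prop:SGProj-GProj} applied not to $(A,\omega,\sigma)$ but to the nc-triple $(A_1, x^2-\omega, \sigma_1)$: one needs an automorphism $\tau'$ of $A_1$ with $(\tau')^2=\sigma_1$, and $\tau_1$ does the job (since $\tau_1^2$ extends $\tau^2=\sigma$ and fixes $x$, hence equals $\sigma_1$). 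So the plan is: first, invoke Proposition~\ref{prop:Theta} for the triple $(A_1, x^2-\omega, \sigma_1)$ with the chosen square root $\tau_1$, to get a triangle equivalence $\Theta\colon \underline{\mathbf{PF}}(A_1; x^2-\omega)\xrightarrow{\sim} \overline{A_2}\mbox{-}\underline{\rm SGProj}^{\rm fpd}$ onto super Gorenstein-projective modules over the double branched cover $A_1[y;\tau_1]/(y^2-(x^2-\omega)) = A_2/(y^2-x^2+\omega) = \overline{A_2}$. Second, apply Proposition~\ref{prop:SGProj-GProj} (the $\Phi$-equivalence, which needs $2$ invertible — note $2$ is invertible in $A_1$ because it is invertible in $A$) to identify $\overline{A_2}\mbox{-}\underline{\rm SGProj}^{((\bar 1),{\rm Id})}$ with $\overline{A_2}\mbox{-}\underline{\rm GProj}$, restricting to the ${\rm fpd}$ versions. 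Third, reconcile the target: we want the answer phrased in terms of $A_1/(x^2-\omega)\mbox{-}\underline{\rm GProj}^{\rm fpd}$, i.e., modules over the even part; but $\Phi$ lands in modules over the full ring $\overline{A_2}$, so I would instead route through the cokernel description directly.

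Actually, the cleaner route — and the one I would write up — keeps ${\rm Cok}^0$ visible. I would show directly that ${\rm Cok}^0$ is essentially surjective onto $A_1/(x^2-\omega)\mbox{-}{\rm GProj}^{\rm fpd}$, fully faithful after passing to stable categories, and exact. For \textbf{essential surjectivity}: given a Gorenstein-projective $\overline{A_1}$-module $G$ with $_A G$ of finite projective dimension (hence projective, since Gorenstein-projective $+$ finite pd $\Rightarrow$ projective over $A$, as already used in the proof of Proposition~\ref{prop:Theta}), one builds a $2$-periodic projective resolution of $G$ as an $A_1$-module: because $x^2=\omega$ acts on $G$ and $_AG$ is projective, the map $x\colon {^{\tau^{-1}}}(G)\to G$ (suitably twisted) together with $x\colon G\to {^\tau}(G)$ assembles into a projective-module factorization of $x^2-\omega$ over $A_1$ whose zeroth cokernel is $G$ — this is precisely the inverse construction encoded in Proposition~\ref{prop:Theta}'s $\Theta^{-1}$ combined with the passage from super modules to ordinary ones. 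For \textbf{full faithfulness on stable categories}: one checks that a morphism of factorizations inducing zero on $\underline{\mathrm{Cok}}^0$ factors through a trivial (projective-injective) factorization, using the standard homotopy argument for matrix factorizations, which goes through verbatim in the projective setting since the lifting arguments only use projectivity of components, not finite generation. For \textbf{exactness as a triangle functor}: the suspension on $\underline{\mathbf{PF}}(A_1; x^2-\omega)$ is given by \eqref{equ:Sigma}, and ${\rm Cok}^0\circ\Sigma$ is naturally the syzygy-inverse (cosyzygy) in $\overline{A_1}\mbox{-}\underline{\rm GProj}$; one verifies ${\rm Cok}^0$ sends the canonical exact triangles (mapping cones of factorizations) to exact triangles in $\overline{A_1}\mbox{-}\underline{\rm GProj}$, which again is the classical computation.

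\textbf{The main obstacle.} The only genuinely non-routine point is making sure the infinite (projective-module, not matrix) generality causes no trouble — specifically that the cokernel of a projective-module factorization of $x^2-\omega$ really is Gorenstein-projective over $\overline{A_1}$ (not merely "Gorenstein-projective-like"), and conversely that every such Gorenstein-projective module with $_A(-)$ of finite projective dimension arises. For this I would lean entirely on Proposition~\ref{prop:Theta} and the split Frobenius extension argument of \cite{Chen-Ren} cited there: since $\Theta$ is already known to be a triangle equivalence $\underline{\mathbf{PF}}(A_1; x^2-\omega)\xrightarrow{\sim}\overline{A_2}\mbox{-}\underline{\rm SGProj}^{\rm fpd}$, and since ${\rm Cok}^0$ differs from (the ordinary-module incarnation of) $\Theta$ only by the equivalence $\Phi$ of Proposition~\ref{prop:SM-M}/\ref{prop:SGProj-GProj} together with a twisting autoequivalence (as the paper itself notes in the paragraph after Theorem~II: "${\rm SE}$ is a quasi-inverse of ${\rm Cok}^0$ up to a twisting autoequivalence"), the theorem reduces to composing equivalences that are all in hand. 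So in the write-up I would state that ${\rm Cok}^0$ equals the composite of $\Theta$ (Proposition~\ref{prop:Theta}) with the equivalence $\overline{A_2}\mbox{-}\underline{\rm SGProj}^{\rm fpd}\xrightarrow{\sim} A_1/(x^2-\omega)\mbox{-}\underline{\rm GProj}^{\rm fpd}$ obtained from $\Phi$ in Proposition~\ref{prop:SGProj-GProj} (for the double branched cover, whose even part is $A_1/(x^2-\omega)$), up to a twisting autoequivalence on the target, and invoke Lemma~\ref{lem:idem-R} to know the relevant stable categories are idempotent-split so that Proposition~\ref{prop:comp} upgrades "equivalence up to retracts" to "equivalence"; the compatibility with triangulated structure then follows from \cite[I.2.8]{Hap} since everything is induced by exact functors between Frobenius exact categories.
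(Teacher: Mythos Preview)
Your equivariantization route does not reach the stated target, and it also imports an assumption the theorem does not make. First, applying Proposition~\ref{prop:Theta} to the nc-triple $(A_1, x^2-\omega, \sigma_1)$ with square root $\tau_1$ gives $\Theta\colon \underline{\mathbf{PF}}(A_1; x^2-\omega)\xrightarrow{\sim}\overline{A_2}\mbox{-}\underline{\rm SGProj}^{\rm fpd}$, and the functor $\Phi$ of Proposition~\ref{prop:SGProj-GProj} then produces \emph{$\overline{A_2}$-modules}, not $\overline{A_1}$-modules: recall that $\Phi(M,\alpha)=M_{\bar 0}$ equipped with an action of the full super ring, not merely its even part. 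Your parenthetical claim that the even part of $\overline{A_2}$ is $A_1/(x^2-\omega)$ is false; as a super ring with $y$ odd we have $\overline{A_2}=A_1\oplus A_1 y$, so its even part is $A_1$. There is no evident equivalence $\overline{A_2}\mbox{-}\underline{\rm SGProj}^{\rm fpd}\xrightarrow{\sim}\overline{A_1}\mbox{-}\underline{\rm GProj}^{\rm fpd}$ coming from $\Phi$. Second, even if this were repaired, $\Phi$ requires $2$ to be invertible, whereas Theorem~\ref{thm:Eis} is stated and used without that hypothesis (the invertibility of $2$ enters only later, in Theorem~\ref{thm:main}).

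The paper's actual proof is a one-liner in a completely different direction: one observes that $x^2-\omega$ is a regular element of $A_1$ (a non-zero-divisor on both sides), and then invokes the general result \cite[Theorem~5.6]{Chen24} on module factorizations over a regular element; see also the proof of \cite[Theorem~7.2]{Chen24}. Your ``direct route'' (build a $2$-periodic resolution from $G$, check full faithfulness via homotopies, check exactness) is essentially what lies behind that cited result and is a reasonable outline, but you abandon it in the final paragraph in favor of the broken composite. If you want to keep the argument self-contained, commit to the direct route and drop the equivariantization detour entirely; otherwise, just verify regularity of $x^2-\omega$ in $A_1$ and cite \cite{Chen24}.
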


\begin{proof}
    We observe that the element $x^2-\omega$ is \emph{regular} in $A_1$, that is, a non-zero-divisor on each side in $A_1$. Then the required equivalence follows from a general result \cite[Theorem~5.6]{Chen24}; consult the proof of \cite[Theorem~7.2]{Chen24}.
\end{proof}

Assume that $G$ belongs to $A_1/{(x^2-\omega)}\mbox{-GProj}^{\rm fpd}$, that is, $G$ is a Gorenstein-projective module over $A_1/{(x^2-\omega)}$ whose underlying $A$-module has finite projective dimension. Then the underlying $A$-module $G$ is indeed projective; see the proof of Proposition~\ref{prop:Theta}. Consider the following sequence of $A_1$-modules.
\begin{align}\label{seq:partial}
    A_1\otimes_A G  \stackrel{\partial^0} \longrightarrow A_1\otimes_A {^\tau(G)} \stackrel{\partial^1}\longrightarrow {^{\sigma_1}(A_1\otimes_A G)}
\end{align}
Here, we have
$$\partial^0(1\otimes_A b)=x\otimes_A {^\tau(b)}-1\otimes_A {^\tau(xb)}$$
and
$$\partial^1(1\otimes_A {^\tau(b)})={^{\sigma_1}(x\otimes_A b)}+{^{\sigma_1}(1\otimes_A xb)}.$$
The quadruple
$${\rm SE}(G)=(A_1\otimes_A G, A_1\otimes_A {^\tau(G)}; \partial^0, \partial^1)$$
is a projective-module factorization of $x^2-\omega$ over  $A_1$. Therefore, we have the \emph{scalar-extension functor}
$${\rm SE} \colon A_1/{(x^2-\omega)}\mbox{-GProj}^{\rm fpd} \longrightarrow \mathbf{PF}(A_1; x^2-\omega), \; G\longmapsto {\rm SE}(G). $$

The following result is essentially due to \cite[Lemma~2.6~ii)]{Kno}; see also \cite[(12.2)~Lemma]{Yos}.

\begin{thm}\label{thm:Kno}
    The functor ${\rm SE}$ above induces a triangle equivalence
    $${\rm SE}\colon  A_1/{(x^2-\omega)}\mbox{-}\underline{\rm GProj}^{\rm fpd} \stackrel{\sim}\longrightarrow \underline{\mathbf{PF}}(A_1; x^2-\omega).$$
\end{thm}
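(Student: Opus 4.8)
The plan is to deduce the statement from Theorem~\ref{thm:Eis}: I will show that, up to the twisting autoequivalence ${}^{\tau_1}(-)$, the scalar-extension functor ${\rm SE}$ is a quasi-inverse of the cokernel functor ${\rm Cok}^0$, which is already known to be a triangle equivalence. Throughout, write $\overline{A_1}=A_1/(x^2-\omega)$.

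First I would check that ${\rm SE}$ descends to a triangle functor between the stable categories. Its two components are $A_1\otimes_A(-)$ and $A_1\otimes_A{}^\tau(-)$; since $A_1$ is free as a right $A$-module, ${\rm SE}\colon \overline{A_1}\mbox{-GProj}^{\rm fpd}\to\mathbf{PF}(A_1; x^2-\omega)$ is an exact functor between Frobenius exact categories. The essential point is that it sends projective-injective objects to projective-injective objects. The projective-injectives of the source are exactly the projective $\overline{A_1}$-modules, so by additivity and closure under direct summands it suffices to identify ${\rm SE}(\overline{A_1})$. Viewing $\overline{A_1}=A\cdot 1\oplus A\cdot x$ as a free left $A$-module, one writes out the differentials $\partial^0,\partial^1$ of ${\rm SE}(\overline{A_1})$ explicitly and, using the short exact sequence $0\to A_1\to A_1\to\overline{A_1}\to 0$ of left $A_1$-modules (the left map being right multiplication by the regular element $x^2-\omega$) together with elementary transformations, one recognizes ${\rm SE}(\overline{A_1})$ as a sum of rank-one trivial (contractible) projective-module factorizations built from $x^2-\omega$; in particular it is projective-injective in $\mathbf{PF}(A_1; x^2-\omega)$. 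Granting this, \cite[I.2.8]{Hap} produces an induced triangle functor $\underline{{\rm SE}}\colon \overline{A_1}\mbox{-}\underline{\rm GProj}^{\rm fpd}\to\underline{\mathbf{PF}}(A_1; x^2-\omega)$, which is the functor in the statement.

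Next I would compute the composite ${\rm Cok}^0\circ{\rm SE}$. Given $G$ in $\overline{A_1}\mbox{-GProj}^{\rm fpd}$, its underlying $A$-module is projective (as recalled just before~(\ref{seq:partial})), and the morphism $\partial^0$ appearing in~(\ref{seq:partial}) is precisely the morphism $\partial$ of Lemma~\ref{lem:A_1-A} for the $A_1$-module $V=G$; hence by that lemma the cokernel of $\partial^0$ is ${}^{\tau_1}(G)$, naturally in $G$. Since $\tau_1$ fixes $x^2-\omega$ and extends $\tau$, the twist ${}^{\tau_1}(-)$ preserves Gorenstein-projectivity and the finite-projective-dimension condition over $A$, hence restricts to a triangle autoequivalence of $\overline{A_1}\mbox{-}\underline{\rm GProj}^{\rm fpd}$. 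We thus obtain a natural isomorphism ${\rm Cok}^0\circ\underline{{\rm SE}}\cong{}^{\tau_1}(-)$. By Theorem~\ref{thm:Eis}, ${\rm Cok}^0$ is a triangle equivalence; choosing a quasi-inverse $F$, we get $\underline{{\rm SE}}\cong F\circ{\rm Cok}^0\circ\underline{{\rm SE}}\cong F\circ{}^{\tau_1}(-)$, a composition of equivalences. Therefore $\underline{{\rm SE}}$ is an equivalence of categories, and, being a triangle functor, it is a triangle equivalence.

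The hard part will be the preservation of projective-injective objects in the first step, i.e. the explicit identification of ${\rm SE}(\overline{A_1})$ as a trivial factorization — equivalently, the assertion that ${\rm SE}$ carries projective $\overline{A_1}$-modules to contractible projective-module factorizations. Everything afterwards is either formal category theory or an immediate consequence of Lemma~\ref{lem:A_1-A}.
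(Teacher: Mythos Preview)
Your proposal is correct and follows essentially the same approach as the paper: both arguments first note that ${\rm SE}$ is exact and preserves projective-injectives (hence descends to the stable categories), then use Lemma~\ref{lem:A_1-A} to identify ${\rm Cok}^0\circ{\rm SE}$ with the twisting autoequivalence ${}^{\tau_1}(-)$, and conclude from Theorem~\ref{thm:Eis}. The only difference is that you spell out more of the verification that ${\rm SE}$ carries projective $\overline{A_1}$-modules to contractible factorizations, whereas the paper simply asserts this.
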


\begin{proof}
    We observe that ${\rm SE}$ sends short exact sequences of Gorenstein projective $A_1/{(x^2-\omega)}$-modules to short exact sequences of projective-module factorizations. Moreover, it sends projective modules to projective-injective objects in $\mathbf{PF}(A_1; x^2-\omega)$. Consequently, we have the induced functor between the stable categories.

    Let $G$ be in $A_1/{(x^2-\omega)}\mbox{-}{\rm GProj}^{\rm fpd}$. Then ${\rm Cok}^0\circ {\rm SE}(G)$ is defined to be the cokernel of the morphism $\partial^0$ above. In view of Lemma~\ref{lem:A_1-A}, this cokernel is isomorphic to ${^{\tau_1}(G)}$. In other words, the composition functor ${\rm Cok}^0\circ {\rm SE}$ is isomorphic to the twisting autoequivalence ${^{\tau_1}(-)}$ on $A_1/{(x^2-\omega)}\mbox{-}\underline{\rm GProj}^{\rm fpd}$. By Theorem~\ref{thm:Eis} ${\rm Cok}^0$ is an equivalence. Then the required statement follows immediately.
\end{proof}

Recall the parity automorphism $g$ on  $A_1/{(x^2-\omega)}$ given by $g(x)=-x$ and $g(a)=a$ for $a\in A$. Then the category $A_1/{(x^2-\omega)}\mbox{-}{\rm GProj}^{\rm fpd}$ is endowed with a $C_2$-action $(^g(-), u)$; see (\ref{equ:u}).

Denote by ${\rm ST}_1$ the swap-twisting autoequivalence on $\underline{\mathbf{PF}}(A_1; x^2-\omega)$. it gives rise to  a $C_2$-action $({\rm ST}_1, c)$ on $\underline{\mathbf{PF}}(A_1; x^2-\omega)$; compare (\ref{equ:c}).

Take any $G$ from $A_1/{(x^2-\omega)}\mbox{-}{\rm GProj}^{\rm fpd}$. We claim that there is a canonical isomorphism
$$\eta^\bullet_G=(\eta^0, \eta^1)\colon {\rm SE}(^g(G))\stackrel{\sim}\longrightarrow {\rm ST}_1\circ {\rm SE}(G).$$
It is illustrated by the following commutative diagram of $A_1$-modules.
\[\xymatrix{
A_1\otimes_A {^g(G)} \ar[rr]^-{\partial^0} \ar[d]_-{\eta^0} && A_1 \otimes_A {^\tau(^g(G))} \ar[rr]^-{\partial^1} \ar[d]^{\eta^1} && {^{\sigma_1}(A_1\otimes_A {^g(G)})} \ar[d]^-{^{\sigma_1}(
\eta^0)}\\
{^{\tau_1^{-1}}(A_1\otimes_A {^\tau(G)})} \ar[rr]^-{^{\tau^{-1}}(\partial^1)} && {^{\tau_1}(A_1\otimes_A G)} \ar[rr]^-{^\tau(\partial^0)} &&  ^{\sigma_1}(^{\tau_1^{-1}}(A_1\otimes_A {^\tau(G)}))
}\]
The isomorphism $\eta^0$ sends $1\otimes_A {^g(v)}$ to $^{\tau_1^{-1}}(1\otimes_A {^\tau(v)})$, and the isomorphism $\eta^1$ sends $1\otimes_A {^\tau(^g(v))}$ to $^{\tau_1}(1\otimes_A v)$.  Here, differentials $\partial^i$ in the upper row are defined for the twisted module ${^g(G)}$.

\begin{prop}\label{prop:G-PF}
    Consider the two $C_2$-actions above. Then $({\rm SE}, \eta^\bullet)$ is a $C_2$-equivariant equivalence from $A_1/{(x^2-\omega)}\mbox{-}\underline{\rm GProj}^{\rm fpd}$ to $\underline{\mathbf{PF}}(A_1; x^2-\omega)$. Consequently, we have an induced triangle equivalence
    $$({\rm SE}, \eta^\bullet)^{C_2}\colon (A_1/{(x^2-\omega)}\mbox{-}\underline{\rm GProj}^{\rm fpd})^{(^g(-), u)} \stackrel{\sim}\longrightarrow \underline{\mathbf{PF}}(A_1; x^2-\omega)^{({\rm ST}_1, c)}. $$
\end{prop}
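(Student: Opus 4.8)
The plan is to verify the three defining conditions of a $C_2$-equivariant equivalence for the pair $({\rm SE},\eta^\bullet)$ and then to invoke Lemma~\ref{lem:equiva-equiva}; the triangulated statement will then be extracted from the fact that both $C_2$-actions are triangle actions and that ${\rm SE}$ is already a triangle equivalence by Theorem~\ref{thm:Kno}.

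First I would check that, for each $G$ in $A_1/(x^2-\omega)\mbox{-}{\rm GProj}^{\rm fpd}$, the pair $\eta^\bullet_G=(\eta^0,\eta^1)$ really is a morphism of projective-module factorizations, that is, that the two squares in the displayed diagram commute. Since $\eta^0$ and $\eta^1$ are prescribed on the $A_1$-module generators $1\otimes_A {}^g(v)$ and $1\otimes_A {}^\tau({}^g(v))$, this is a direct computation with the explicit formulas for $\partial^0,\partial^1$ — bearing in mind that in the top row these are formed for the twisted module ${}^g(G)$, so that $x$ there acts with an extra sign — together with the canonical identifications ${}^{\tau^{-1}}({}^\tau(-))=\mathrm{id}$ and ${}^{\tau_1^{-1}}({}^{\sigma_1}(-))={}^{\tau_1}(-)$. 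Bijectivity of $\eta^0,\eta^1$ is immediate from the formulas, so $\eta^\bullet_G$ is an isomorphism; and naturality of $\eta^\bullet$ in $G$ is equally routine, since a morphism $G\to G'$ in the source is in particular an $A$-module map and $\eta^0,\eta^1$ are built only from the identity on the underlying module together with the twist and shift identifications.

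The main step is the cocycle identity $S\eta^\bullet\circ\eta^\bullet T\circ {\rm SE}(u)=v\,{\rm SE}$ from the definition of a $C_2$-equivariant functor, which here reads
$${\rm ST}_1(\eta^\bullet_G)\circ\eta^\bullet_{{}^g(G)}\circ {\rm SE}(u_G)=c_{{\rm SE}(G)}$$
for every $G$, where $u_G$ is the canonical isomorphism $(\ref{equ:u})$ and $c$ is the $C_2$-structure $(\ref{equ:c})$ on ${\rm ST}_1$. I would unwind this componentwise: track a generator $1\otimes_A b$ of $A_1\otimes_A G$ through ${\rm SE}(u_G)$, then through $\eta^\bullet_{{}^g(G)}$, then through ${\rm ST}_1(\eta^\bullet_G)$, and check that the outcome coincides with its image under $c_{{\rm SE}(G)}$, which by $(\ref{equ:c})$ is the identification of $A_1\otimes_A G$ with ${}^{\tau_1^{-1}}({}^{\tau_1}(A_1\otimes_A G))$; the odd component $A_1\otimes_A {}^\tau(G)$ is handled in the same way. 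In essence every arrow in sight is built out of a chosen identification of iterated twists, and the content of the identity is precisely that the resulting composite of identifications is the coherence isomorphism $c$. I expect this to be the one place requiring genuine care, since the twist ${}^g(-)$ must be tracked against both ${}^\tau(-)$ and the grading shift, and the signs in $(\ref{equ:parity})$–$(\ref{equ:u})$ must be matched up correctly; this is the analogue, on the ${\rm SE}$ side, of the verification carried out in Lemma~\ref{lem:xi} for $\Theta$.

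Granting these verifications, $({\rm SE},\eta^\bullet)$ is a $C_2$-equivariant functor, and since ${\rm SE}$ is an equivalence by Theorem~\ref{thm:Kno} it is a $C_2$-equivariant equivalence; Lemma~\ref{lem:equiva-equiva} then yields at once that $({\rm SE},\eta^\bullet)^{C_2}$ is an equivalence. For the triangulated statement, note that ${\rm ST}_1$ is a triangle autoequivalence and $c$ an isomorphism of triangle functors, and likewise for $({}^g(-),u)$, so both $C_2$-actions are triangle $C_2$-actions; by Lemma~\ref{lem:tri} the two equivariant categories are therefore pre-triangulated, with exact triangles created by the forgetful functors. Since ${\rm SE}$ is a triangle functor and one checks that the natural isomorphism ${\rm SE}\circ\Sigma\cong\Sigma\circ{\rm SE}$ is compatible with the two equivariant structures (using the explicit description $(\ref{equ:Sigma})$ of the suspension), the induced functor $({\rm SE},\eta^\bullet)^{C_2}$ commutes with suspension up to natural isomorphism and sends exact triangles to exact triangles, hence is a triangle equivalence.
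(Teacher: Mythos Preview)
Your approach is essentially the same as the paper's: verify the cocycle identity, invoke Theorem~\ref{thm:Kno} for the equivalence, and apply Lemma~\ref{lem:equiva-equiva} for the induced equivalence on equivariant objects. The paper's proof is terser and points out that the cocycle verification is simpler than you anticipate---since $\eta^\bullet$, $u$, and $c$ are all pure relabeling identifications of iterated twists, the action of $x$ (and hence any sign coming from ${}^g(-)$) plays no role in the diagram, so it commutes strictly without the sign-tracking you were bracing for.
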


\begin{proof}
We observes that the following diagram strictly commutes.
\[\xymatrix{
{\rm SE} \ar[d]_-{{\rm SE}(u)} \ar[rr]^-{c({\rm SE})} && {\rm ST}_1\circ {\rm ST}_1\circ {\rm SE} \\
{\rm SE}\circ {^g(-)}\circ {^g(-)} \ar[rr]^-{\eta^\bullet ({^g(-)})} && {\rm ST}_1\circ {\rm SE} \circ {^g(-)} \ar[u]_-{{\rm ST}_1(\eta^\bullet)}
}\]
Roughly speaking, the commutativity holds since the action of $x$ does not play a role in this diagram.

The commutativitiy above implies that $({\rm SE}, \eta^\bullet)$ is a $C_2$-equivariant functor, and thus a $C_2$-equivariant equivalence by Theorem~\ref{thm:Kno}. The last statement follows from Lemma~\ref{lem:equiva-equiva}.
\end{proof}

Consider the skew polynomial ring $A_2=A_1[y; \tau_1]$. In particular, in $A_2$ we have $yx=xy$ and $ya=\tau(a)y$ for $a\in A$. Denote by $\sigma_2$ the automorphism of $A_2$ given by $\sigma_2(a)=\sigma(a)$, $\sigma_2(x)=x$ and $\sigma_2(y)=y$. We have
$$(y^2-x^2+\omega)z=\sigma_2(z) (y^2-x^2+\omega)$$
for all $z\in A_2$. We view  the quotient ring $A_2/(y^2-x^2+\omega)$ as a super ring  by means of ${\rm deg}(y)=\bar{1}$ and ${\rm deg}(x)=\bar{0}={\rm deg}(a)$.

Let $N^\bullet=(N^0, N^1; d_N^0, d_N^1)$  be a module factorization of $x^2-\omega$ over $A_1$. Similar to (\ref{equ:Theta}),  we define
$$\Theta_1(N^\bullet)=N^0\oplus {^{\tau_1^{-1}}(N^1)},$$
which is naturally a super module over $A_2/(y^2-x^2+\omega)$. Moreover, in view of (\ref{equ:xi}) we have a natural isomorphism  of super $A_2/(y^2-x^2+\omega)$-modules
$$\xi_{N^\bullet}\colon \Theta_1\circ {\rm ST}_1(N^\bullet) \stackrel{\sim }\longrightarrow \Theta_1(N^\bullet)(\bar{1}). $$

Recall that $({\rm ST}_1, c)$ is a $C_2$-action on $\underline{\mathbf{PF}}(A_1; x^2-\omega)$. The pair $((\bar{1}), {\rm Id})$ is a strict $C_2$-action on $A_2/(y^2-x^2+\omega)\mbox{-}\underline{\rm SGProj}^{\rm fpd}$.

\begin{prop}\label{prop:Theta_1}
  Consider the two $C_2$-actions above.  Then the pair $(\Theta_1, \xi)$ is  a $C_2$-equivariant equivalence from $\underline{\mathbf{PF}}(A_1; x^2-\omega)$ to $A_2/(y^2-x^2+\omega)\mbox{-}\underline{\rm SGProj}^{\rm fpd}$. Consequently, we have an induced triangle equivalence
  $$(\Theta_1, \xi)^{C_2}\colon \underline{\mathbf{PF}}(A_1; x^2-\omega)^{({\rm ST}_1, c)} \stackrel{\sim}\longrightarrow (A_2/(y^2-x^2+\omega)\mbox{-}\underline{\rm SGProj}^{\rm fpd})^{((\bar{1}), {\rm Id})}.$$
\end{prop}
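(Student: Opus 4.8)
The plan is to reduce Proposition~\ref{prop:Theta_1} to the already-established machinery by viewing $\Theta_1$ as an instance of the equivalence $\Theta$ from Lemma~\ref{lem:xi} and Proposition~\ref{prop:Theta}, applied to the nc-triple $(A_1, x^2-\omega, \sigma_1)$ together with the automorphism $\tau_1$ (which satisfies $\tau_1^2 = \sigma_1$ since $\tau^2 = \sigma$ and both fix $x$). First I would observe that the skew polynomial ring in the variable $y$ over $A_1$, namely $A_2 = A_1[y; \tau_1]$, plays exactly the role that ``$A_1$'' played in Section~\ref{sec:PF}, so that $A_2/(y^2 - x^2 + \omega)$ is the noncommutative double branched cover of $A_1$ with respect to the potential $x^2 - \omega$, and the super structure with $\deg(y) = \bar 1$ matches the one used there. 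Under this identification, the functor $\Theta_1$ is literally the functor $\Theta$ of Section~\ref{sec:PF} for this new nc-triple, and ${\rm ST}_1$ is the corresponding swap-twisting autoequivalence ${\rm ST}$ with its canonical $C_2$-action $({\rm ST}_1, c)$.

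Granting that identification, the first assertion — that $(\Theta_1, \xi)$ is a $C_2$-equivariant equivalence from $\mathbf{F}(A_1; x^2-\omega)$ to $A_2/(y^2-x^2+\omega)\mbox{-SMod}$ — is exactly Lemma~\ref{lem:xi}. Next I would invoke Proposition~\ref{prop:Theta} (again for the nc-triple $(A_1, x^2-\omega, \sigma_1)$) to see that $\Theta_1$ restricts to an equivalence $\mathbf{PF}(A_1; x^2-\omega) \xrightarrow{\sim} A_2/(y^2-x^2+\omega)\mbox{-SGProj}^{\rm fpd}$ of Frobenius exact categories, hence induces a triangle equivalence on stable categories; here one uses that $x^2 - \omega$ is regular in $A_1$, which was already noted in the proof of Theorem~\ref{thm:Eis}. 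Since both the swap-twisting $C_2$-action on $\underline{\mathbf{PF}}(A_1; x^2-\omega)$ and the degree-shifting $C_2$-action on the stable category of super modules are the restrictions of the ones on the ambient categories, and the natural isomorphism $\xi$ restricts accordingly, the $C_2$-equivariance passes to the stable level. Thus $(\Theta_1, \xi)$ is a $C_2$-equivariant triangle equivalence $\underline{\mathbf{PF}}(A_1; x^2-\omega) \xrightarrow{\sim} A_2/(y^2-x^2+\omega)\mbox{-}\underline{\rm SGProj}^{\rm fpd}$.

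Finally, the displayed induced triangle equivalence on categories of $C_2$-equivariant objects follows from Lemma~\ref{lem:equiva-equiva} applied to $(\Theta_1, \xi)$, together with the observation (as in the proof of Proposition~\ref{prop:G-PF}) that the induced functor $(\Theta_1,\xi)^{C_2}$ between these pretriangulated categories is a triangle functor. I expect the only genuine point requiring care — rather than a true obstacle — is the bookkeeping that verifies the identifications of twisted modules implicit in the definition of $\Theta_1$ and in $\xi_{N^\bullet}$ are compatible with those of Section~\ref{sec:PF}, i.e.\ checking that the required composite in the style of Lemma~\ref{lem:xi} is the identity; this is a routine diagram chase in which, as remarked there and in Proposition~\ref{prop:G-PF}, the variable $y$ does not interact with the verification. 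One should also note in passing that the relevant pretriangulated structures exist by Lemma~\ref{lem:tri}, using that $2$ is invertible throughout (inherited from $A$, hence holding in $A_1$ and $A_2$).
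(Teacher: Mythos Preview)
Your proposal is correct and follows essentially the same route as the paper's own proof: apply Lemma~\ref{lem:xi} and Proposition~\ref{prop:Theta} to the nc-triple $(A_1, x^2-\omega, \sigma_1)$ with the automorphism $\tau_1$, then invoke Lemma~\ref{lem:equiva-equiva} for the induced equivalence on equivariant objects. One small aside: the regularity of $x^2-\omega$ in $A_1$ is not actually used in Proposition~\ref{prop:Theta} (its proof relies only on the split Frobenius extension $A\hookrightarrow\overline{A_1}$ and \cite{Chen-Ren}), so that remark may be omitted.
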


\begin{proof}
    The first statement is obtained by applying Lemma~\ref{lem:xi} and Proposition~\ref{prop:Theta} to the nc-triple $(A_1, x^2-\omega, \sigma_1)$. The last statement follows from Lemma~\ref{lem:equiva-equiva}.
\end{proof}

\begin{rem}
    The results in this section work well for matrix factorizations and totally-reflexive modules. To be more precise, the triangle equivalence in Proposition~\ref{prop:G-PF} restricts to the following triangle equivalence
$$(A_1/{(x^2-\omega)}\mbox{-}\underline{\rm Gproj}^{\rm fpd})^{(^g(-), u)} \stackrel{\sim}\longrightarrow \underline{{\mathbf{MF}}}(A_1; x^2-\omega)^{({\rm ST}_1, c)}. $$    The triangle equivalence in Proposition~\ref{prop:Theta_1} restricts to a triangle equivalence
$$\underline{{\mathbf{MF}}}(A_1; x^2-\omega)^{({\rm ST}_1, c)} \stackrel{\sim}\longrightarrow (A_2/(y^2-x^2+\omega)\mbox{-}\underline{\rm SGproj}^{\rm fpd})^{((\bar{1}), {\rm Id})}.$$
\end{rem}

\section{Kn\"{o}rrer's periodicity theorem}\label{sec:Kn}

In this section, we establish noncommutative Kn\"{o}rrer periodicity for projective-module factorizations, and obtain an explicit quasi-inverse.

We fix an nc-triple $(A, \omega, \sigma)$ and form the stable category $\underline{\mathbf{PF}}(A; \omega)$ of projective-module factorizations of $\omega$ over $A$.

Fix another automorphism $\tau$ of $A$ satisfying $\tau^2=\sigma$ and $\tau(\omega)=\omega$. Consider the skew polynomial ring $A_1=A[x; \tau]$. Both automorphisms $\sigma$ and $\tau$ extends naturally to automorphisms $\sigma_1$ and $\tau_1$ on $A_1$ which fix $x$. In particular, we have the intermediate nc-triple $(A_1, x^2-\omega, \sigma_1)$.

Consider the skew polynomial ring $A_2=A_1[y; \tau_1]$ and its element $y^2-x^2+\omega$. The automorphism $\sigma_1$ on $A_1$ extends an automorphism $\sigma_2$ on $A_2$, which fixes $y$.  In other words, we obtain another nc-triple $(A_2, y^2-x^2+\omega, \sigma_2)$. We form the stable category $\underline{\mathbf{PF}}(A_2; y^2-x^2+\omega)$ of projective-module factorizations of $y^2-x^2+\omega$ over $A_2$.

The following result extends the noncommutative Kn\"{o}rrer periodicity between matrix factorizations in \cite[Theorem~5.11]{CKMW} and \cite[Theorem~3.9]{MU} to projective-module factorizations, using $C_2$-equivariantization.

\begin{thm}\label{thm:main}
Keep the assumptions above. Assume further that $2$ is invertible in $A$. Then there is an explicit triangle equivalence
$${\rm Kn}\colon \underline{\mathbf{PF}}(A; \omega)\stackrel{\sim}\longrightarrow \underline{\mathbf{PF}}(A_2; y^2-x^2+\omega),$$
which restricts to an equivalence up to retracts
$$\underline{{\mathbf{MF}}}(A; \omega) \longrightarrow \underline{{\mathbf{MF}}}(A_2; y^2-x^2+\omega).$$
\end{thm}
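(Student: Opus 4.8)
The plan is to assemble the equivalence $\mathrm{Kn}$ as a composition of the $C_2$-equivariant equivalences established in the previous sections, and then track how each factor behaves on the subcategories of matrix factorizations. First I would apply Proposition~\ref{prop:SM-M} in the form of Proposition~\ref{prop:SGProj-GProj}, together with Proposition~\ref{prop:comp}, to the super ring $\overline{A_1}=A_1/(x^2-\omega)$ equipped with its parity automorphism $g$; the hypothesis that $2$ is invertible in $A$ guarantees $2$ is invertible in $\overline{A_1}$, so the comparison functor $K$ is an equivalence once the relevant stable category is idempotent-split — which it is, by Lemma~\ref{lem:idem-R} transported along the equivalence of Proposition~\ref{prop:SGProj-GProj}. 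This yields a triangle equivalence
$$\overline{A_1}\mbox{-}\underline{\rm SGProj}^{\rm fpd} \xrightarrow{\ \sim\ } (\overline{A_1}\mbox{-}\underline{\rm GProj}^{\rm fpd})^{({^g(-)},u)}.$$
Chaining this with the equivalence $\Theta$ of Proposition~\ref{prop:Theta} (identifying $\underline{\mathbf{PF}}(A;\omega)$ with $\overline{A_1}\mbox{-}\underline{\rm SGProj}^{\rm fpd}$), with the induced equivariant equivalence $({\rm SE},\eta^\bullet)^{C_2}$ of Proposition~\ref{prop:G-PF}, and with the induced equivariant equivalence $(\Theta_1,\xi)^{C_2}$ of Proposition~\ref{prop:Theta_1}, and finally with the inverse of the comparison/super-module identification for $A_2/(y^2-x^2+\omega)$ (again Propositions~\ref{prop:SM-M} and \ref{prop:comp}, using that $2$ is invertible), produces the desired triangle equivalence
$${\rm Kn}\colon \underline{\mathbf{PF}}(A;\omega)\xrightarrow{\ \sim\ }\underline{\mathbf{PF}}(A_2;y^2-x^2+\omega).$$
Each arrow in this chain is a triangle functor by the cited results and the Remark following Lemma~\ref{lem:tri}, so the composite is too.

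For the restricted statement, I would invoke the Remarks at the ends of Sections~\ref{sec:PF} and \ref{sec:SE}, which record that $\Theta$, $({\rm SE},\eta^\bullet)^{C_2}$ and $(\Theta_1,\xi)^{C_2}$ all restrict to equivalences on the matrix-factorization / totally-reflexive subcategories. The only two places in the chain where an \emph{honest} equivalence degenerates to an equivalence \emph{up to retracts} are the two invocations of the comparison functor $K$ at the ends of the chain, i.e.\ the diagrams (\ref{equ:Phi-Gproj}) and (\ref{equ:Psi-Gproj}): there $\underline{R\mbox{-}{\rm SGproj}^{((\bar 1),{\rm Id})}}$ need not be idempotent-split, so $K$ is only an equivalence up to retracts by Proposition~\ref{prop:comp}. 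Since a composition of an equivalence up to retracts with (fully faithful, essentially-onto-up-to-retracts) equivalences is again an equivalence up to retracts, and the middle factors are genuine equivalences on the matrix-factorization subcategories, the restricted functor
$$\underline{\mathbf{MF}}(A;\omega)\longrightarrow \underline{\mathbf{MF}}(A_2;y^2-x^2+\omega)$$
is fully faithful with every object of the target a retract of an object in the image — i.e.\ an equivalence up to retracts. (If moreover $\underline{\mathbf{MF}}(A;\omega)$ is idempotent-split, Lemma~\ref{lem:e-uptore} upgrades this to an equivalence, which matches the denseness remark after Theorem~I.)

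The main obstacle I anticipate is bookkeeping rather than a conceptual difficulty: one must make sure the $C_2$-actions match up at each junction — the strict shift action $((\bar 1),{\rm Id})$ on super modules over $\overline{A_1}$, the parity action $({^g(-)},u)$ on $\overline{A_1}\mbox{-}\underline{\rm GProj}^{\rm fpd}$, the swap-twisting action $({\rm ST}_1,c)$ on $\underline{\mathbf{PF}}(A_1;x^2-\omega)$, and the strict shift action on super modules over $A_2/(y^2-x^2+\omega)$ — so that the equivariant equivalences of Propositions~\ref{prop:G-PF} and \ref{prop:Theta_1} can legitimately be composed, and then that the resulting equivalence on equivariant categories is identified, via Propositions~\ref{prop:SM-M}/\ref{prop:SGProj-GProj}, with $\underline{\mathbf{PF}}$ of the original and the doubled nc-triples. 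The twisting autoequivalence $^{\tau_1}(-)$ appearing in Theorem~\ref{thm:Kno} (from ${\rm Cok}^0\circ{\rm SE}\cong {^{\tau_1}(-)}$) should be absorbed harmlessly, but one should check it does not interfere with the equivariant structure; this is the one spot where a short explicit verification is unavoidable. Everything else follows formally from the already-established propositions.
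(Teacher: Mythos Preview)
Your chain is missing its final link. After applying $(\Theta_1,\xi)^{C_2}$ you land in $(\overline{A_2}\text{-}\underline{\rm SGProj}^{\rm fpd})^{((\bar 1),{\rm Id})}$, and your ``inverse of the comparison/super-module identification'' (Propositions~\ref{prop:SM-M}/\ref{prop:SGProj-GProj} and~\ref{prop:comp}) is precisely the equivalence $\Phi$ of Proposition~\ref{prop:SGProj-GProj}, which takes you to $\overline{A_2}\text{-}\underline{\rm GProj}^{\rm fpd}$ --- \emph{not} to $\underline{\mathbf{PF}}(A_2;y^2-x^2+\omega)$. These are different categories: one consists of modules over the quotient $\overline{A_2}=A_2/(y^2-x^2+\omega)$, the other of factorizations over $A_2$ itself. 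To bridge them you must invoke Theorem~\ref{thm:Kno} a second time, applied now to the nc-triple $(A_1,x^2-\omega,\sigma_1)$ in place of $(A,\omega,\sigma)$; this gives the scalar-extension equivalence
\[
{\rm SE}_1\colon \overline{A_2}\text{-}\underline{\rm GProj}^{\rm fpd}\xrightarrow{\ \sim\ }\underline{\mathbf{PF}}(A_2;y^2-x^2+\omega),
\]
using that $A_2=A_1[y;\tau_1]$ and $y^2-(x^2-\omega)=y^2-x^2+\omega$. (The paper even flags this in the introduction: Theorem~II ``is used twice in the construction of ${\rm Kn}$.'') Your side remark about the twisting autoequivalence ${}^{\tau_1}(-)$ concerns the first, equivariant invocation of ${\rm SE}$ and does not supply this missing step.

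Once you insert ${\rm SE}_1$, the rest of your outline matches the paper's six-step proof exactly: $\Theta$, then $\Psi$, then $({\rm SE},\eta^\bullet)^{C_2}$, then $(\Theta_1,\xi)^{C_2}$, then $\Phi$, then ${\rm SE}_1$. Your analysis of the restricted statement is also correct --- the degeneration to ``equivalence up to retracts'' arises only at the two uses of the comparison functor $K$ in Steps~2 and~5, via the diagrams~(\ref{equ:Psi-Gproj}) and~(\ref{equ:Phi-Gproj}) --- and ${\rm SE}_1$ restricts to a genuine equivalence on matrix factorizations, so adding it does not disturb that argument.
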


We refer to (\ref{equ:Kn}) below for the explicit construction of ${\rm Kn}$, and Proposition~\ref{prop:inverse} for an explicit quasi-inverse.

\begin{proof}
    We will divide the proof into six steps, each of which is conceptually easy and contains an explicit triangle equivalence.  The following diagram illustrates the construction of ${\rm Kn}$.

\[\xymatrix@C=13.5pt{
 \underline{\mathbf{PF}}(A; \omega)  \ar[d]^-{\Theta}_-{\rm Prop.~\ref{prop:Theta}} \ar@{.>}[rrrr]^-{\rm Kn} &&&&  \underline{\mathbf{PF}}(A_2; y^2 - x^2 + \omega)\\
  \overline{A_1}\text{-}\underline{\rm SGProj}^{\mathrm{fpd}} \ar[d]^-{\Psi}_-{\rm Prop.~\ref{prop:SGProj-GProj}} &&&&  \overline{A_2}\text{-}\underline{\rm GProj}^{\mathrm{fpd}}  \ar[u]^-{{\rm SE}_1}_-{\rm Thm.~\ref{thm:Kno}}\\
(\overline{A_1}\text{-}\underline{\rm GProj}^{\mathrm{fpd}})^{({^g(-)}, u)}
\ar[rr]^-{(\mathrm{SE}, \eta^\bullet)^{C_2}}_-{\rm Prop.~\ref{prop:G-PF}}  & &  \underline{\mathbf{PF}}(A_1; x^2-\omega)^{(\mathrm{ST}_1, c)} \ar[rr]^-{(\Theta_1, \xi)^{C_2}}_-{\rm Prop.~\ref{prop:Theta_1}} &&
(\overline{A_2}\text{-}\underline{\rm SGProj}^{\mathrm{fpd}})^{((\bar{1}), \mathrm{Id})} \ar[u]^-{\Phi}_-{\rm Prop.~\ref{prop:SGProj-GProj}}
}\]

    Here, we set $\overline{A_1}=A_1/{(x^2-\omega)}$ and $\overline{A_2}=A_2/{(y^2-x^2+\omega)}$. Assume that $P^\bullet=(P^0, P^1; d^0, d^1)$ is an arbitrary object in $\underline{\mathbf{PF}}(A; \omega)$.

\vskip 3pt

    \emph{Step 1.} \; We view $\overline{A_1}$ as a super ring by taking ${\rm deg}(x)=\bar{1}$. Recall that $\overline{A_1}\mbox{-\underline{\rm SGProj}}^{\rm fpd}$ denotes the stable category of super Gorenstein-projective $\overline{A_1}$-modules whose underlying $A$-modules have finite projective dimension.  By Proposition~\ref{prop:Theta}, we have a triangle equivalence
    $$\Theta\colon \underline{\mathbf{PF}}(A; \omega)\stackrel{\sim}\longrightarrow \overline{A_1}\mbox{-\underline{\rm SGProj}}^{\rm fpd}. $$
    Recall that
    $$\Theta(P^\bullet)=P^0\oplus {^{\tau^{-1}}(P^1)}$$
    as a super $A$-module. The action of the element $x$ on $\Theta(P^\bullet)$ is induced by the differentials $d^0$ and $d^1$.
\vskip 3pt

    \emph{Step 2.}\; Denote by $g$ the parity automorphism on $\overline{A_1}$, which is given by $g(x)=-x$ and $g(a)=a$ for $a\in A$. Recall that $(^g(-), u)$ is a $C_2$-action on $\overline{A_1}\mbox{-}\underline{\rm GProj}^{\rm fpd}$. By applying Proposition~\ref{prop:SGProj-GProj} to the super ring $\overline{A_1}$, we obtain a triangle equivalence
    $$\Psi\colon  \overline{A_1}\mbox{-\underline{\rm SGProj}}^{\rm fpd}\stackrel{\sim}\longrightarrow (\overline{A_1}\mbox{-}\underline{\rm GProj}^{\rm fpd})^{(^g(-), u)}.$$
    We have
    $$\Psi\circ \Theta(P^\bullet)=(P^0\oplus {^{\tau^{-1}}(P^1)}, \alpha),$$
    where $P^0\oplus {^{\tau^{-1}}(P^1)}$ is viewed as an ordinary module over $\overline{A_1}$ and $\alpha$ is the parity isomorphism (\ref{equ:parity}) on it. Roughly speaking, $\alpha$ acts by $1$ on $P^0$, and by $-1$ on $^{\tau^{-1}}(P^1)$.
\vskip 3pt

    \emph{Step 3.} \; Recall that the swap-twisting autoequivalence ${\rm ST}_1$ on $\underline{\mathbf{PF}}(A_1; x^2-\omega)$ yields a $C_2$-action $({\rm ST}_1, c)$. By Proposition~\ref{prop:G-PF}, we have a triangle equivalence
    $$({\rm SE}, \eta^\bullet)^{C_2}\colon (\overline{A_1}\mbox{-}\underline{\rm GProj}^{\rm fpd})^{(^g(-), u)}\stackrel{\sim}\longrightarrow  \underline{\mathbf{PF}}(A_1; x^2-\omega)^{({\rm ST}_1, c)}.$$

    To describe the image of $\Psi\circ \Theta(P^\bullet)$ under this equivalence, we recall the scalar-extension functor ${\rm SE}$ in Section~\ref{sec:SE}. Consider the following two projective $A_1$-modules
    $$Q^0=A_1\otimes_A(P^0\oplus {^{\tau^{-1}}(P^1)}) \mbox{ and } Q_1=A_1\otimes_A(^\tau(P^0)\oplus P^1).$$
    We have a morphism $\partial^0\colon Q^0\rightarrow Q^1$ given by
    $$\partial^0(1\otimes_A b^0)=x\otimes_A {^\tau(b^0)}-1\otimes_A d^0(b^0) $$
    and $$\partial^0(1\otimes_A {^{\tau^{-1}}(b^1)})=-1\otimes_A {^\tau(b'^0)}+ x\otimes_A b^1.$$
    Here, we  have $b^i\in P^i$ and  assume that $d^1(b^1)={^\sigma(b'^0)}$. We have another morphism $\partial^1\colon Q^1\rightarrow {^{\sigma_1}(Q^0)}$ given by
    $$\partial^1(1\otimes_A {^\tau(b^0)})= {^{\sigma_1}(x\otimes_A b^0)} + {^{\sigma_1}(1\otimes_A {^{\tau^{-1}}(d^0(b^0))})} $$
    and
    $$\partial^1(1\otimes_A b^1)= {^{\sigma_1}(1 \otimes_A b'^0)} + {^{\sigma_1}(x\otimes_A {^{\tau^{-1}}(b^1)})}. $$
    Here, we still use the convention that $d^1(b^1)={^\sigma(b'^0)}$. These data define an object
    $$Q^\bullet=(Q^0, Q^1; \partial^0, \partial^1)\in \underline{\mathbf{PF}}(A_1; x^2-\omega),$$
    which equals ${\rm SE}(P^0\oplus {^{\tau^{-1}}(P^1)})$.

    Set
    $$\beta^\bullet=\eta^\bullet_{P^0\oplus {^{\tau^{-1}}(P^1)}} \circ {\rm SE}(\alpha)\colon Q^\bullet \longrightarrow {\rm ST}_1(Q^\bullet).$$ Roughly speaking, $\beta^0$ and $\beta^1$ acts  by $1$ on the direct summands $A_1\otimes_A P^0$ and $A_1\otimes_A{^\tau(P^0)}$; they act by $-1$ on $A_1\otimes_A {^{\tau^{-1}}(P^1)}$ and $A_1\otimes_A P^1$.

    In summary, the equivalence $({\rm SE}, \eta^\bullet)^{C_2}$ sends $\Psi\circ \Theta(P^\bullet)$ to the following $C_2$-equivariant object
    $$(Q^\bullet, \beta^\bullet)\in \underline{\mathbf{PF}}(A_1; x^2-\omega)^{({\rm ST}_1, c)}. $$
\vskip 3pt

\emph{Step 4.}\; We view $\overline{A_2}$ as a super ring such that $y$ is odd and $x$ is even. The even part of $\overline{A_2}$ is identified with the ring $A_1$. We have a strict $C_2$-action $((\bar{1}), {\rm Id})$ on $\overline{A_2}\mbox{-}\underline{\rm SGProj}^{\rm fpd}$ induced by the degree-shifting automorphism. By Proposition~\ref{prop:Theta_1}, we have a triangle equivalence
  $$(\Theta_1, \xi)^{C_2}\colon \underline{\mathbf{PF}}(A_1; x^2-\omega)^{({\rm ST}_1, c)} \stackrel{\sim}\longrightarrow (\overline{A_2}\mbox{-}\underline{\rm SGProj}^{\rm fpd})^{((\bar{1}), {\rm Id})}.$$
    We have
    $$\Theta_1(Q^\bullet)=Q^0\oplus {^{\tau_1^{-1}}(Q^1)}$$
    as a ungraded $A_1$-module. As a super $\overline{A}_2$-module, its even part is $Q^0$ and its odd part is ${^{\tau_1^{-1}}(Q^1)}$, respectively. The action of $y$ on $\Theta_1(Q^\bullet)$ is induced by $\partial^0$ and $\partial^1$.

    We have an isomorphism
    $$\gamma=\xi_{Q^\bullet}\circ \Theta_1(\beta^\bullet)\colon \Theta_1(Q^\bullet) \stackrel{\sim}\longrightarrow \Theta_1(Q^\bullet)(\bar{1})$$
    of super $\overline{A_2}$-modules. This gives to a $C_2$-equivariant object $(Q^0\oplus {^{\tau_1^{-1}}(Q^1)}, \gamma)$ in $(\overline{A_2}\mbox{-}\underline{\rm SGProj}^{\rm fpd})^{((\bar{1}), {\rm Id})}$.

    In summary, the equivalence $(\Theta_1, \xi)^{C_2}$ sends $(Q^\bullet, \beta^\bullet)$ to
    $$ (Q^0\oplus {^{\tau_1^{-1}}(Q^1)}, \gamma)\in (\overline{A_2}\mbox{-}\underline{\rm SGProj}^{\rm fpd})^{((\bar{1}), {\rm Id})}. $$
\vskip 3pt

    \emph{Step 5.}\; Applying Proposition~\ref{prop:SGProj-GProj} to the super ring $\overline{A_2}$, we have a triangle equivalence
    $$\Phi\colon (\overline{A_2}\mbox{-}\underline{\rm SGProj}^{\rm fpd})^{((\bar{1}), {\rm Id})}\stackrel{\sim}\longrightarrow \overline{A_2}\mbox{-}\underline{\rm GProj}^{\rm fpd}.$$
    Recall that
    $$\Phi(Q^0\oplus {^{\tau_1^{-1}}(Q^1)}, \gamma)=Q^0$$
    as a ungraded $A_1$-module. The action of $y$ on $Q^0$ is given by the composition of $\partial^0$ and $\gamma$. To be more precise, for any element $b \in Q^0$, we have
    $$yb=\gamma({^{\tau_1^{-1}}(\partial^0(b))})\in Q^0. $$
    Here, we have to notice  that $\gamma$ acts by $-1$ on the direct summand ${^{\tau_1^{-1}}(A_1\otimes_A P^1)}$ of ${^{\tau_1^{-1}}(Q^1)}$. Therefore, we have
    $$y(1\otimes_A b^0)=x\otimes_A b^0+1\otimes_A {^{\tau^{-1}}(d^0(b^0))}$$
    and
    $$y(1\otimes_A {^{\tau^{-1}}(b^1))}= -1\otimes_A b'^0-x\otimes_A {^{\tau^{-1}}(b^1)}$$
for $b^0 \in P^0$, $b^1\in P^1$ and $d^1(b^1)={^\sigma(b'^0)}$.
\vskip 3pt

    \emph{Step 6.}\;  By applying Theorem~\ref{thm:Kno} to the nc-triple $(A_1, x^2-\omega, \sigma_1)$, we obtain a triangle equivalence
    $${\rm SE}_1\colon \overline{A_2}\mbox{-}\underline{\rm GProj}^{\rm fpd} \stackrel{\sim}\longrightarrow \underline{\mathbf{PF}}(A_2; y^2-x^2+\omega),$$
    which is given by the scalar-extension functor.  Here, we observe that $y^2-(x^2-\omega)=y^2-x^2+\omega$.

    Consider the resulting $\overline{A_2}$-module $Q^0$ in Step 5. We will compute ${\rm SE}_1(Q^0)$.  Recall that
    $${\rm SE}_1(Q^0)=(A_2\otimes_{A_1} Q^0, A_2\otimes_{A_1} {^{\tau_1}(Q^0)}; D^0, D^1),$$
    where the differentials $D^i$ play the role of $\partial^i$ in (\ref{seq:partial}).

    We identify $A_2\otimes_{A_1} Q^0$ with $(A_2\otimes_A P^0) \oplus (A_2\otimes_A {^{\tau^{-1}}(P^1)})$, and $A_2\otimes_{A_1} {^{\tau_1}(Q^0)}$ with $(A_2\otimes_A {^\tau(P^0)})\oplus (A_2\otimes_A P^1)$. Using these identifications, the differential
    $$D^0\colon (A_2\otimes_A P^0) \oplus (A_2\otimes_A {^{\tau^{-1}}(P^1)})\longrightarrow (A_2\otimes_A {^\tau(P^0)})\oplus (A_2\otimes_A P^1)$$
    is given by
    $$D^0(1\otimes_A b^0)=y\otimes_A {^\tau(b^0)}-x\otimes_A {^\tau(b^0)}-1\otimes_A d^0(b^0) $$
    and
    $$D^0(1\otimes_A {^{\tau^{-1}}(b^1)})=y\otimes_A b^1+x\otimes_A b^1+1\otimes_A {^\tau(b'^0)}.$$
    Similarly, the differential
    $$D^1\colon (A_2\otimes_A {^\tau(P^0)})\oplus (A_2\otimes_A P^1)\longrightarrow {^{\sigma_2}(A_2\otimes_A P^0)} \oplus {^{\sigma_2}(A_2\otimes_A {^{\tau^{-1}}(P^1)})}$$
    is given by
    $$D^1(1\otimes_A {^\tau(b^0)})={^{\sigma_2}(y\otimes_A b^0)} + {^{\sigma_2}(x\otimes_A b^0)} + {^{\sigma_2}(1\otimes_A {^{\tau^{-1}}(d^0(b^0))})}$$
    and
    $$D^1(1\otimes_A b^1)={^{\sigma_2}(y\otimes_A {^{\tau^{-1}}(b^1)})} - {^{\sigma_2}(x\otimes_A {^{\tau^{-1}}(b^1)})} - {^{\sigma_2}(1\otimes_A b'^0)}.$$
    Here,  we use the convention that $d^1(b^1)={^\sigma(b'^0)}$ twice.

    In summary,  the projective-module factorization ${\rm SE}_1(Q^0)$ of $y^2-x^2+\omega$ over $A_2$ is explicitly given by
    \begin{align}\label{equ:Kn}
        {\rm Kn}(P^\bullet)=(A_2\otimes_A P^0 \oplus A_2\otimes_A {^{\tau^{-1}}(P^1)}, A_2\otimes_A {^\tau(P^0)}\oplus A_2\otimes_A P^1; D^0, D^1).
    \end{align}
    Combining the six steps above, we  obtain the explicit equivalence ${\rm Kn}$.

    For the restricted functor between matrix factorizations, we repeat the six steps above by replacing PF (\emph{resp}., SGProj, GProj) with MF (\emph{resp}., SGproj, Gproj). However, in Step~2 we have to replace $\Psi$ by the composite functor  (\ref{equ:Psi-Gproj}), and in Step~5 we have to replace $\Phi$ by a diagram of the form (\ref{equ:Phi-Gproj}). Due to the possibly failure of idempotent-splitness of the stable categories of matrix factorizations, the restricted functor is only an equivalence up to retracts.
    \end{proof}

\begin{rem}
(1) In the commutative situation, the explicit equivalence ${\rm Kn}$ coincides with the one in \cite[Section~3]{Kno}; see also \cite[(12.8)~Definition]{Yos} and \cite[8.31~Definition]{LW}. In particular, the proof above indicates that the construction of ${\rm Kn}$ in \cite{Kno} can be obtained via explicit calculations in $C_2$-equivariantization.  We mention the work  \cite{Bird} on infinite Kn\"{o}rrer periodicity using the dg method.

(2) We do not have an explicit example where the restricted functor between matrix factorizations is non-dense; compare \cite[Remark~3.10]{MU} and Corollary~\ref{cor:Orl} below.

(3) In view of Theorem~\ref{thm:main}, we suspect that an analogue of \cite[Theorem~1.2]{Br} for projective-module factorizations might hold.
\end{rem}

There is a surjective ring homomorphism
$${\rm equ}\colon A_2\longrightarrow A_1$$
given by ${\rm equ}(x)=x={\rm equ}(y)$, and ${\rm equ}(a)=a$ for $a\in A$. This makes $A_1$  a right $A_2$-module, and thus an $A$-$A_2$-bimodule. This bimodule induces the following tensor  functor
$$A_1\otimes_{A_2}- \colon \underline{\mathbf{PF}}(A_2; y^2-x^2+\omega)\longrightarrow  \underline{\mathbf{PF}}(A; \omega),$$
which sends $(Q^0, Q^1; d_Q^0, d_Q^1)$ to
$$(A_1\otimes_{A_2} Q^0, A_1\otimes_{A_2} Q^1; A_1\otimes_{A_2} d_Q^0, \kappa\circ (A_1\otimes_{A_2} d_Q^1)).$$
Here, $\kappa\colon A_1\otimes_{A_2} {^{\sigma_2}(Q^0)}\rightarrow {^\sigma(A_1\otimes_{A_2} Q^0)}$ is the obvious isomorphism sending  $f\otimes_{A_2} {^{\sigma_2}(q)}$ to $^\sigma(\sigma_1(f)\otimes_{A_2}q)$. Since $A_1$ is infinitely generated as a left $A$-module, the tensor functor $A_1\otimes_{A_2}-$ does not restrict to matrix factorizations.

\begin{prop}\label{prop:inverse}
    Keep the assumptions in Theorem~\ref{thm:main}. Then the tensor functor $A_1\otimes_{A_2}-$ above is a quasi-inverse of ${\rm Kn}$.
\end{prop}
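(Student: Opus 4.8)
The plan is to show that $A_1\otimes_{A_2}-$ is a quasi-inverse of $\mathrm{Kn}$ by exhibiting it as the composite of quasi-inverses of the six steps in the proof of Theorem~\ref{thm:main}. Since $\mathrm{Kn}={\rm SE}_1\circ \Phi\circ (\Theta_1,\xi)^{C_2}\circ ({\rm SE},\eta^\bullet)^{C_2}\circ \Psi\circ \Theta$, a quasi-inverse is obtained by composing the quasi-inverses of these functors in reverse order, and each of those quasi-inverses has been described explicitly (either in this excerpt or in the cited references): $\Theta^{-1}$ in the lemma following the definition of $\Theta$; $\Psi^{-1}$ in Proposition~\ref{prop:M-SM}; $({\rm SE},\eta^\bullet)^{C_2}$ admits the quasi-inverse induced by ${\rm Cok}^0$ up to the twist ${}^{\tau_1}(-)$ via Theorem~\ref{thm:Eis} and Theorem~\ref{thm:Kno}; $(\Theta_1,\xi)^{C_2}$ is quasi-inverted by $\Theta_1^{-1}$; $\Phi^{-1}$ is the functor $\mathcal{E}$ of Proposition~\ref{prop:SM-M}; and ${\rm SE}_1$ is quasi-inverted by ${\rm Cok}^0$ (up to a twist) via Theorem~\ref{thm:Eis}. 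The strategy is not to chase this abstract composite symbolically, but rather to compute, for a projective-module factorization $Q^\bullet=(Q^0,Q^1;d_Q^0,d_Q^1)$ of $y^2-x^2+\omega$ over $A_2$, what $A_1\otimes_{A_2} Q^\bullet$ is, and to verify directly that it agrees with the above composite applied to $Q^\bullet$.

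First I would track what happens under ${\rm Cok}^0\circ {\rm SE}_1^{-1}$: starting from $Q^\bullet$ over $A_2$, apply ${\rm Cok}^0$ to get an object $G$ of $\overline{A_2}\mbox{-}\underline{\rm GProj}^{\rm fpd}$, namely $G={\rm Cok}(d_Q^0)$, with the $y$-action inherited from $A_2$. Then apply $\Phi^{-1}=\mathcal{E}$ and the comparison functor to land in $(\overline{A_2}\mbox{-}\underline{\rm SGProj}^{\rm fpd})^{((\bar 1),{\rm Id})}$; this is harmless since $\mathcal{E}$ only doubles and the equivariant structure records the parity of $y$. Next, $\Theta_1^{-1}$ produces a projective-module factorization of $x^2-\omega$ over $A_1$ together with its ${\rm ST}_1$-equivariant structure: concretely, the even part $(\mathcal{E}(G))_{\bar 0}$ and the odd part become the two components, with differentials built from the $x$- and $y$-actions on $G$. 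Applying $({\rm SE},\eta^\bullet)^{-C_2}\cong ({\rm Cok}^0,\ \text{twist})^{C_2}$ gives an object of $(\overline{A_1}\mbox{-}\underline{\rm GProj}^{\rm fpd})^{({}^g(-),u)}$; its underlying module is the cokernel of the first differential, carrying both the $x$-action (hence a super structure over $\overline{A_1}$) and the $g$-equivariant structure. Finally $\Psi^{-1}$ and $\Theta^{-1}$ turn this into a projective-module factorization of $\omega$ over $A$. The key computational point is the compatibility of the ring surjection ${\rm equ}\colon A_2\to A_1$ with the tensor operation: $A_1\otimes_{A_2}-$ kills precisely the relation $y=x$, and one checks that on cokernels, ${\rm Cok}^0(A_1\otimes_{A_2} Q^\bullet)\cong \overline{A_1}\otimes_{\overline{A_2}}{\rm Cok}^0(Q^\bullet)$, which matches the composite above because passing from $\overline{A_2}$ to $\overline{A_1}$ along ${\rm equ}$ is exactly the operation that survives the round trip through $\Theta_1^{-1}$, ${\rm SE}^{-1}$, $\Phi^{-1}$.

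As an alternative and perhaps cleaner route, I would verify directly that $A_1\otimes_{A_2}\mathrm{Kn}(P^\bullet)\cong P^\bullet$ in $\underline{\mathbf{PF}}(A;\omega)$, using the explicit formula (\ref{equ:Kn}). Applying $A_1\otimes_{A_2}-$ to $\mathrm{Kn}(P^\bullet)$ replaces each $A_2$ by $A_1$ and sets $y=x$; in the differentials $D^0,D^1$ of (\ref{equ:Kn}), the terms $y\otimes_A(-)$ and $\pm x\otimes_A(-)$ then combine, so that $y\otimes_A {}^\tau(b^0)-x\otimes_A{}^\tau(b^0)\mapsto 0$ while $y\otimes_A b^1+x\otimes_A b^1\mapsto 2x\otimes_A b^1$, and similarly on the other components. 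The resulting projective-module factorization over $A_1$ decomposes, up to the addition of a trivial (contractible) factorization, as $A_1\otimes_A P^\bullet$ in the sense of scalar extension along $A\hookrightarrow A_1$; and then a further round trip via Lemma~\ref{lem:A_1-A}, together with the observation that $A_1\otimes_A P^\bullet$ restricted appropriately retracts onto $P^\bullet$, yields $A_1\otimes_{A_2}\mathrm{Kn}(P^\bullet)\cong P^\bullet$. Since $2$ is invertible, the factor of $2$ appearing above is harmless. Because $\mathrm{Kn}$ is already known to be an equivalence by Theorem~\ref{thm:main}, establishing the natural isomorphism $A_1\otimes_{A_2}-\circ\,\mathrm{Kn}\cong {\rm Id}$ on one side is enough to conclude that $A_1\otimes_{A_2}-$ is a (two-sided) quasi-inverse.

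I expect the main obstacle to be bookkeeping rather than conceptual: one must carefully match the many twisting autoequivalences (${}^\tau,{}^{\tau^{-1}},{}^{\sigma},{}^{\sigma_1},{}^{\sigma_2}$) and the isomorphisms $\kappa$, $\xi$, $\eta^\bullet$, $c$, $u$ across the six steps, and verify that the contractible summand one discards after tensoring is genuinely trivial in $\mathbf{PF}(A_1;x^2-\omega)$ (this is where the regularity of $x^2-\omega$ in $A_1$, already used in Theorem~\ref{thm:Eis}, and the invertibility of $2$ enter). A secondary subtlety is naturality: one must check the isomorphism $A_1\otimes_{A_2}\mathrm{Kn}(P^\bullet)\cong P^\bullet$ is natural in $P^\bullet$, which follows because every map used (the projection ${\rm equ}$, the retract in Lemma~\ref{lem:A_1-A}, the splitting of the contractible summand) can be chosen functorially.
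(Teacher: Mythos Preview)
Your second route---compute $A_1\otimes_{A_2}\mathrm{Kn}(P^\bullet)$ directly from (\ref{equ:Kn}) and show it is isomorphic to $P^\bullet$ in the stable category---is exactly the paper's approach, and your observation that setting $y=x$ makes $y-x$ vanish while $y+x$ becomes $2x$ is the right first move. But your description of the resulting decomposition is wrong, and the argument as you sketch it would not close.

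The object $A_1\otimes_{A_2}\mathrm{Kn}(P^\bullet)$ is a factorization of $\omega$ over $A$, not over $A_1$: its components $A_1\otimes_A P^0\oplus A_1\otimes_A{}^{\tau^{-1}}(P^1)$ and $A_1\otimes_A{}^\tau(P^0)\oplus A_1\otimes_A P^1$ are regarded as $A$-modules via $A_1=\bigoplus_{i\geq 0}Ax^i$. It does \emph{not} split as ``$A_1\otimes_A P^\bullet$ plus a contractible summand'' followed by a retraction, and Lemma~\ref{lem:A_1-A} is irrelevant here. In fact $A_1\otimes_A P^\bullet$ (with the induced differentials) decomposes over $A$ as $\bigoplus_{j\geq 0}{}^{\tau^{-j}}(P^\bullet)$, none of whose summands is trivial in general, so no such retraction exists. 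What the paper does instead is regroup the $x$-degree pieces in a specific asymmetric way: the degree-$0$ parts of the $P^0$- and $P^1$-summands alone assemble into $(P^0,P^1;-d^0,-d^1)\cong P^\bullet$, while for each $i\geq 1$ the four pieces
\[
Ax^i\otimes_A P^0,\quad Ax^{i-1}\otimes_A{}^{\tau^{-1}}(P^1),\quad Ax^{i-1}\otimes_A{}^\tau(P^0),\quad Ax^i\otimes_A P^1
\]
form a trivial factorization $\theta^0({}^{\tau^{-i}}(P^1))\oplus\theta^1({}^{\tau^{2-i}}(P^0))$; invertibility of $2$ is used precisely to see that the off-diagonal entry $2x$ makes these summands contractible. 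Thus $A_1\otimes_{A_2}\mathrm{Kn}(P^\bullet)\cong P^\bullet\oplus\bigoplus_{i\geq 1}(\text{trivial}_i)$ in $\mathbf{PF}(A;\omega)$, and no further step is needed. Your first route (composing explicit quasi-inverses of the six steps) is not pursued in the paper and, as you yourself note, would be heavy bookkeeping with no obvious payoff.
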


\begin{proof}
    Let $P^\bullet=(P^0, P^1; d^0, d^1)$ be the arbitary projective-module factorization in the proof of Theorem~\ref{thm:main}. In view of (\ref{equ:Kn}), we infer that $A_1\otimes_{A_2} {\rm Kn}(P^\bullet)$ is isomorphic to the following factorization.
   \begin{align}\label{equ:TKn}
   (A_1\otimes_A P^0 \oplus A_1\otimes_A {^{\tau^{-1}}(P^1)}, A_1\otimes_A {^\tau(P^0)}\oplus A_1\otimes_A P^1; \overline{D^0}, \overline{D^1}).
   \end{align}
    The differentials are described as follows. We have
        $$\overline{D^0}(x^i\otimes_A b^0)=-x^i\otimes_A d^0(b^0) \mbox{ and } \overline{D^0}(x^i\otimes_A {^{\tau^{-1}}(b^1)})=2x^{i+1}\otimes_A b^1+x^i\otimes_A {^\tau(b'^0)}.$$
        Here, we recall the convention that $d^1(b^1)={^\sigma(b'^0)}$. We have
         $$ \overline{D^1} (x^i \otimes_A {^\tau(b^0)}) = {^{\sigma}(2x^{i+1}\otimes_A b^0)} + {^{\sigma}(x^i\otimes_A {^{\tau^{-1}} (d^0(b^0))})}, $$
    and
    $$\overline{D^1}(x^i\otimes_A b^1)=- {^{\sigma}(x^i\otimes_A b'^0)}.$$

    We observe the canonical decomposition $A_1=\bigoplus_{i\geq 0}Ax^i$ as an $A$-$A$-bimodule. Using this decomposition, we infer that (\ref{equ:TKn}) is isomorphic to the coproduct of
    \begin{align}\label{equ:TKn1}
        (P^0, P^1; -d^0, -d^1)
    \end{align}
    and
    \begin{align}\label{equ:TKn2}
  (Ax^i\otimes_A P^0 \oplus Ax^{i-1}\otimes_A {^{\tau^{-1}}(P^1)}, Ax^{i-1}\otimes_A {^\tau(P^0)}\oplus Ax^i\otimes_A P^1; \overline{D^0}, \overline{D^1}).
    \end{align}
    for all $i\geq 1$.

    Since $2$ is invertible in $A$, it is easy to prove that the factorization (\ref{equ:TKn2}) is projective-injective.  Indeed, it is isomorphic to the direct sum
    $$\theta^0({^{\tau^{-i}}(P^1)}) \oplus \theta^1({^{\tau^{2-i}}(P^0)})$$
    of  trivial module factorizations in \cite[Example~3.1]{Chen24}. The factorization (\ref{equ:TKn1}) is certainly isomorphic to $P^\bullet$. This implies that $A_1\otimes_{A_2} {\rm Kn}(P^\bullet)$ is isomorphic to $P^\bullet$ in the stable category. The isomorphism is given by a canonical morphism, and thus functorial. This completes the proof.
\end{proof}

We give an example of the  noncommutative Kn\"{o}rrer periodicity above. For further concrete examples, we refer to \cite[Section~6]{CKMW} and \cite[Section~6]{MU}.

\begin{exm}\label{exm:root}
    {\rm Let $\mathbb{K}$ be a field with characteristic different from $2$, and let $A$ be a finite dimensional hereditary $\mathbb{K}$-algebra. Consider the nc-triple $(A, 0, {\rm Id}_A)$. The corresponding category ${\mathbf{MF}}(A; 0)$ of matrix factorizations coincides with the category of $2$-cyclic complexes of finitely generated projective $A$-modules, and the stable category $\underline{\mathbf{MF}}(A; 0)$ coincides with the corresponding homotopy category. In other words, we have $\underline{\mathbf{MF}}(A; 0)=\mathbf{R}(A)$, the \emph{root category} of $A$ in the sense of \cite[Subsection~5.1]{Hap87} and \cite[Section~7]{PX}.

Take  an automorphism $\tau$ of $A$ satisfying $\tau^2={\rm Id}_A$. We have the skew polynomial rings $A_1=A[x; \tau]$ and $A_2=A_1[y; \tau_1]$. We have the nc-triple $(A_2, y^2-x^2, {\rm Id}_{A_2})$ and form the category $\underline{\mathbf{MF}}(A_2; y^2-x^2)$. Here, $y^2-x^2$ is a central element of $A_2$. By Theorem~\ref{thm:main},  we have a triangle equivalence
$${\rm Kn}\colon \mathbf{R}(A)\stackrel{\sim}\longrightarrow \underline{\mathbf{MF}}(A_2; y^2-x^2).$$
Here, we implicitly use the fact that $\mathbf{R}(A)$ is idempotent-split.}
\end{exm}

\section{Comparing two functors}\label{sec:compare}

In this section, we compare the explicit equivalence in Theorem~\ref{thm:main}  with an explicit tensor functor in \cite{Orl}.

Let $R$ be an arbitrary ring. Denote by $\mathbf{D}^b(R\mbox{-Mod})$ the bounded derived category $R\mbox{-Mod}$, and by $\mathbf{K}^b(R\mbox{-Proj})$ the bounded homotopy category of $R\mbox{-Proj}$. By \cite[I.3.3]{Hap}, we view $\mathbf{K}^b(R\mbox{-Proj})$  as a triangulated subcategory of $\mathbf{D}^b(R\mbox{-Mod})$.

Following \cite{Buc, Orl}, the \emph{big singularity category} of $R$ is defined to be the following Verdier quotient category
$$\mathbf{D}'_{\rm sg}(R)=\mathbf{D}^b(R\mbox{-Mod})/{\mathbf{K}^b(R\mbox{-Proj})}.$$
By sending an $R$-module $M$ to the corresponding stalk complex $M$ concentrated in degree zero, we have the  following  well-defined canonical functor
$$Q_R\colon R\mbox{-\underline{Mod}} \longrightarrow \mathbf{D}'_{\rm sg}(R), \; M \longmapsto M.$$

If $R$ is left noetherian, we have the \emph{singularity category} of $R$
$$\mathbf{D}_{\rm sg}(R)=\mathbf{D}^b(R\mbox{-mod})/{\mathbf{K}^b(R\mbox{-proj})}.$$
By \cite[Proposition~1.13]{Orl}, the canonical functor $\mathbf{D}_{\rm sg}(R)\rightarrow \mathbf{D}'_{\rm sg}(R)$ is fully faithful; see also \cite[Remark~3.6]{Chen11}. We view $\mathbf{D}_{\rm sg}(R)$ as a triangulated subcategory of $\mathbf{D}'_{\rm sg}(R)$. In this situation, the functor $Q_R$ restricts to $R\mbox{-\underline{mod}} \rightarrow \mathbf{D}_{\rm sg}(R)$.

Let us come back to the situation in Section~\ref{sec:Kn}. Let $(A, \omega, \sigma)$ be an nc-triple and $\tau$ an automorphism of $A$ satisfying $\tau^2=\sigma$ and $\tau(\omega)=\omega$. Consider the skew polynomial rings $A_1=A[x; \tau]$ and $A_2=A_1[y; \tau_1]$.

Consider the quotient rings $\bar{A}=A/{(\omega)}$ and $\overline{A_2}=A_2/{(y^2-x^2+\omega)}$. The automorphism $\tau$ induces an autormorphism $\bar{\tau}$ on $\overline{A}$. We form another skew polynomial ring
$$B=\bar{A}[x; \bar{\tau}].$$
We view $\bar{A}$ as a subring of $B$.

We have a surjective ring homomorphism
$$\pi\colon \overline{A_2} \longrightarrow B$$
such that $\pi(x)=x$, $\pi(y)=-x$ and $\pi(a)=\overline{a}$ for $a\in A$. Indeed, this homomorphism induces an isomorphism of rings
\begin{align}\label{iso:B}
    \overline{A_2}/(x+y)\stackrel{\sim} \longrightarrow B.
\end{align}

The discussion above endows $B$ with a natural $\overline{A_2}$-$\overline{A}$-bimodule structure. Moreover, its underlying right $\overline{A}$-module is free. Since the element $x+y$ is regular in $\overline{A_2}$, by (\ref{iso:B}) we infer that the left $\overline{A_2}$-module $B$ has projective dimension one.  Consequently, the following triangle functor is well-defined
$$B\otimes_{\bar{A}}-\colon  \mathbf{D}'_{\rm sg}(\bar{A}) \longrightarrow \mathbf{D}'_{\rm sg}(\overline{A_2}).$$
This functor is analogous to the one in \cite[Section~2]{Orl}.

We have the following comparison theorem, which yields an infinite noncommutative analogue of \cite[Theorem~2.1]{Orl}.

\begin{thm}\label{thm:comparison}
    Keep the assumptions above. Assume further that the element $\omega$ is regular in $A$. Then the following diagram commutes up to a natural isomorphism.
    \begin{align}\label{diag:comp}
    \xymatrix{
\underline{\mathbf{PF}}(A; \omega) \ar[d]_-{Q_{\bar{A}}\circ {\rm Cok}^0} \ar[rr]^-{\rm Kn} && \underline{\mathbf{PF}}(A_2; y^2-x^2+\omega) \ar[d]^-{Q_{\overline{A_2}}\circ {\rm Cok}^0}\\
    \mathbf{D}'_{\rm sg}(\bar{A})  \ar[rr]^-{B\otimes_{\bar{A}}-} &&  \mathbf{D}'_{\rm sg}(\overline{A_2})
    }
    \end{align}
    Consequently, if $A$ has finite left global dimension and $2$ is invertible in $A$, then $B\otimes_{\bar{A}} -\colon \mathbf{D}'_{\rm sg}(\bar{A}) \rightarrow \mathbf{D}'_{\rm sg}(\overline{A_2}) $ is a triangle equivalence.
\end{thm}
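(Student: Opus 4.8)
The plan is to prove the two assertions of Theorem~\ref{thm:comparison} in turn. \emph{Commutativity of the square.} Since $\omega$ is regular in $A$ and $y^2-x^2+\omega$ is monic, hence regular, over $A_1$, each cokernel functor ${\rm Cok}^0$ is a well-defined triangle functor out of the relevant stable category: it kills the trivial (projective--injective) factorizations, whose zeroth cokernels are projective over $\bar A$ resp.\ $\overline{A_2}$, so $Q_{\bar A}\circ{\rm Cok}^0$ and $Q_{\overline{A_2}}\circ{\rm Cok}^0$ make sense. It then suffices to produce, for an arbitrary $P^\bullet=(P^0,P^1;d^0,d^1)$, a natural isomorphism $B\otimes_{\bar A}{\rm Cok}^0(P^\bullet)\cong{\rm Cok}^0({\rm Kn}(P^\bullet))$ in $\mathbf{D}'_{\rm sg}(\overline{A_2})$. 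On the left, $B=\bar A[x;\bar\tau]$ is free as a right $\bar A$-module, so $B\otimes_{\bar A}-$ is exact and $B\otimes_{\bar A}{\rm Cok}^0(P^\bullet)=\bigoplus_{i\ge 0}x^i\otimes{\rm Cok}^0(P^\bullet)$ with $\overline{A_2}$-action pushed forward along $\pi\colon\overline{A_2}\to B$. On the right, I would feed the explicit formula (\ref{equ:Kn}) for ${\rm Kn}(P^\bullet)$ into the computation of ${\rm Cok}^0({\rm Kn}(P^\bullet))={\rm coker}(D^0)$; most efficiently, one recalls from the proof of Theorem~\ref{thm:Kno} that ${\rm Cok}^0\circ{\rm SE}_1$ is, up to a twisting autoequivalence, the identity, which identifies this cokernel with (a twist of) the $\overline{A_2}$-module $\widehat{Q}=A_1\otimes_A\bigl(P^0\oplus{}^{\tau^{-1}}(P^1)\bigr)$ carrying the $x$- and $y$-actions written down in Step~5 of the proof of Theorem~\ref{thm:main}.

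\emph{The core comparison.} The heart of the matter is to match $\widehat{Q}$ with $\bigoplus_{i\ge0}x^i\otimes{\rm Cok}^0(P^\bullet)$ inside $\mathbf{D}'_{\rm sg}(\overline{A_2})$. Because $\omega$ is regular, $d^0$ is injective, so $0\to P^0\xrightarrow{d^0}P^1\to{\rm Cok}^0(P^\bullet)\to 0$ is exact; applying the exact functor $A_1\otimes_A-$ and using the explicit $x$- and $y$-actions on $\widehat{Q}$ from Step~5, I would write down a surjective $\overline{A_2}$-module homomorphism $\widehat{Q}\twoheadrightarrow B\otimes_{\bar A}{\rm Cok}^0(P^\bullet)$ with projective kernel. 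Concretely, $\pi$ is division by the regular normal element $x+y$ (see (\ref{iso:B})), and the kernel is assembled from copies of $A_1\otimes_A P^0$ that split off as ``trivial'' projective--injective summands, exactly as in the computation in the proof of Proposition~\ref{prop:inverse}. Passing to $\mathbf{D}'_{\rm sg}(\overline{A_2})$ annihilates this kernel and yields the isomorphism; its naturality in $P^\bullet$ is automatic since every map used is canonical. One also has to track the twisting autoequivalences: the twist coming from ${\rm Cok}^0\circ{\rm SE}_1$ is of the same flavour as the $\bar\tau$ built into $B=\bar A[x;\bar\tau]$, and a short bookkeeping shows they cancel --- as they must, since in the commutative specialisation all twists are trivial and one recovers \cite[Theorem~2.1]{Orl}.

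\emph{The triangle equivalence.} Now assume in addition that $A$ has finite left global dimension and $2$ is invertible in $A$. Then $A_1=A[x;\tau]$ and $A_2=A_1[y;\tau_1]$ also have finite left global dimension (a skew polynomial extension raises it by at most one), so $\bar A=A/(\omega)$ and $\overline{A_2}=A_2/(y^2-x^2+\omega)$ are each the quotient of a ring of finite global dimension by a regular normal element, hence Iwanaga--Gorenstein. Over such a ring $\bar R=R/(f)$ every module has finite $R$-projective dimension, so the ``fpd'' restriction becomes vacuous and ${\rm Cok}^0\colon\underline{\mathbf{PF}}(R;f)\stackrel{\sim}{\longrightarrow}\bar R\mbox{-}\underline{\rm GProj}$ is a triangle equivalence (the general form of Theorem~\ref{thm:Eis}, \cite[Theorem~5.6]{Chen24}), while $Q_{\bar R}$ restricts to a triangle equivalence $\bar R\mbox{-}\underline{\rm GProj}\stackrel{\sim}{\longrightarrow}\mathbf{D}'_{\rm sg}(\bar R)$ by Buchweitz's theorem \cite{Buc}; compare \cite[Theorem~3.9]{Orl}. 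Thus both vertical functors in (\ref{diag:comp}) are triangle equivalences; since ${\rm Kn}$ is a triangle equivalence by Theorem~\ref{thm:main}, the commutativity established above forces $B\otimes_{\bar A}-$ to be a triangle equivalence.

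\emph{Main obstacle.} The genuinely computational point, and the one I expect to be hardest, is the core comparison of the second paragraph: exhibiting the explicit surjection $\widehat{Q}\twoheadrightarrow B\otimes_{\bar A}{\rm Cok}^0(P^\bullet)$, checking that its kernel is projective, and confirming that all the twisting autoequivalences cancel; the formulas from Steps~5--6 of the proof of Theorem~\ref{thm:main} and from Proposition~\ref{prop:inverse} should make this manageable, but it is the only place where real calculation is needed. A secondary issue is the assertion that $R/(f)$ is Iwanaga--Gorenstein for $R$ of finite global dimension and $f$ regular normal; if a convenient reference is not at hand in this noncommutative generality, one can prove it directly via the change-of-rings argument along $0\to R\xrightarrow{f}R\to\bar R\to0$, or sidestep it by establishing $\bar R\mbox{-}\underline{\rm GProj}\simeq\mathbf{D}'_{\rm sg}(\bar R)$ directly from the fact that every $\bar R$-module has finite $R$-projective dimension.
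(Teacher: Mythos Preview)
Your plan for the \emph{consequence} matches the paper's argument: both note that $\bar A$ and $\overline{A_2}$ are Gorenstein when $A$ has finite global dimension (the paper cites \cite[Theorem~2.10]{Chen24}) and then invoke Buchweitz's equivalence (the paper cites \cite[Theorem~6.9]{Bel00}) to see that the vertical arrows are equivalences.

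Your route to the \emph{commutativity}, however, is genuinely different from the paper's. You identify ${\rm Cok}^0({\rm Kn}(P^\bullet))$ with a twist of $\widehat Q=Q^0$ via ${\rm Cok}^0\circ{\rm SE}_1\simeq{}^{(\tau_1)_1}(-)$, and then look for a surjection $\widehat Q\twoheadrightarrow B\otimes_{\bar A}M$ with projective kernel. This can indeed be made to work: the $\overline{A_2}$-submodule of $\widehat Q$ generated by the summand $A_1\otimes_A P^0$ is isomorphic to $\overline{A_2}\otimes_A P^0$ (hence projective), and the quotient is $A_1\otimes_A{}^{\tau^{-1}}(M)\cong B\otimes_{\bar A}{}^{\bar\tau^{-1}}(M)$ with $y$ acting as $-x$. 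But note the residual $\bar\tau^{-1}$-twist: your ``short bookkeeping'' must cancel it against the twist from ${\rm Cok}^0\circ{\rm SE}_1$, and you have not indicated how. (Your appeal to Proposition~\ref{prop:inverse} is also only loosely analogous; that computation decomposes $A_1\otimes_{A_2}{\rm Kn}(P^\bullet)$ as a coproduct of factorizations, which is a different mechanism from exhibiting a projective $\overline{A_2}$-submodule.)

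The paper avoids both the twist-chasing and the ad~hoc kernel-projectivity check by a more structural route. It passes to $\Sigma^{-1}\circ{\rm Kn}(P^\bullet)$, so that the relevant cokernel is that of $-{}^{\sigma_2^{-1}}(D^1)$ rather than $D^0$. It then writes down a truncated $B$-projective resolution $\delta$ of $B\otimes_{\bar A}M$ using the noncommutative Eisenbud lemma \cite[Lemma~5.9]{MU21}, replaces each $B\otimes_A P$ by the two-term $\overline{A_2}$-complex $\overline{A_2}\otimes_A{}^{\tau^{-1}}(P)\xrightarrow{-(x+y)}\overline{A_2}\otimes_A P$ coming from (\ref{iso:B}), and forms the total complex of the resulting \emph{quasi}-bicomplex (the horizontal composites are not zero but rather multiplication by $\omega=(x-y)(x+y)$). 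A spectral-sequence argument identifies this total complex with $\delta$, so its appropriate cokernel is $\Omega_{\overline{A_2}}(B\otimes_{\bar A}M)$; the key computational step is that the middle differential of the total complex is \emph{exactly} the matrix $-{}^{\sigma_2^{-1}}(D^1)$. One finishes with $\Omega\simeq\Sigma^{-1}$ in $\mathbf{D}'_{\rm sg}(\overline{A_2})$. What this buys: the twists never surface, because explicit matrices are compared rather than abstract module structures, and the projective resolution is built systematically. What your approach would buy, if the twist bookkeeping were completed: a direct module-level comparison with no suspension shifts or quasi-bicomplex machinery.
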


\begin{proof}
    Let $P^\bullet=(P^0, P^1; d^0, d^1)$ be any projective-module factorization of $\omega$.  In view of (\ref{equ:Sigma}) and (\ref{equ:Kn}), we have $\Sigma^{-1}\circ {\rm Kn}(P^\bullet)$ is given by
    $$( A_2\otimes_A {^{\tau^{-1}}(P^0)} \oplus A_2\otimes_A {^{\sigma^{-1}}(P^1)}, A_2\otimes_A P^0 \oplus A_2\otimes_A {^{\tau^{-1}}(P^1)}; -{^{{\sigma_2}^{-1}}(D^1)}, -D^0).$$
    Here, we identify ${^{{\sigma_2}^{-1}}( A_2\otimes_A {^\tau(P^0)}\oplus A_2\otimes_A P^1 )}$ with the leftmost term above. For the $D^i$, we refer to Step~6 in the proof of Theorem~\ref{thm:main}.

    The $\bar{A}$-module $M={\rm Cok}^0(P^\bullet)$ fits into a short exact sequence of $A$-modules.
    $$0\longrightarrow P^0\stackrel{d^0} \longrightarrow P^1\stackrel{} \longrightarrow M \longrightarrow 0$$
    We observe that $A_1\otimes_A P^\bullet$ belongs to ${\mathbf{PF}}(A_1; \omega)$. We identify $A_1/{(\omega)}$ with $B$. Applying \cite[Lemma~5.9]{MU21}, a noncommutative version of \cite[Proposition~5.1]{Eis},  to $A_1\otimes_A P^\bullet$, we obtain an exact sequence of $B$-modules.
    $$\delta\colon B\otimes_A {^{\sigma^{-1}}(P^1)} \xrightarrow{B\otimes_A {^{\sigma^{-1}}}(d^1)}  B\otimes_A P^0 \xrightarrow{B\otimes_A d^0} B\otimes_A P^1 \longrightarrow B\otimes_{\bar{A}} M\longrightarrow 0$$
    The sequence $\delta$ is a truncated projective resolution of $B\otimes_{\bar{A}} M$.

For each projective $A$-module $P$, we use the isomorphism (\ref{iso:B}) to \emph{replace} $B\otimes_A P$ by the following two-term complex.
$$\overline{A_2}\otimes_A{^{\tau^{-1}}(P)}\xrightarrow{-(x+y)} \overline{A_2}\otimes_A P$$
Here, the element $-(x+y)$ means the unique $\overline{A_2}$-module homomorphism, sending $1\otimes_A {^{\tau^{-1}}(z)}$ to $-(x+y)\otimes_A z$.  We apply this replacement to the three terms in $\delta$, and obtain the following commutative diagram.
\begin{align}\label{diag:replace}
\xymatrix{
\overline{A_2}\otimes_A {^{(\tau\sigma)^{-1}}(P^1)} \ar[d]_-{-(x+y)} \ar[rr]^-{\overline{A_2}\otimes_A {^{(\tau\sigma)^{-1}}(d^1)}}  && \overline{A_2}\otimes_A {^{\tau^{-1}}(P^0)} \ar[d]^-{-(x+y)}\ar[rr]^-{\overline{A_2}\otimes_A {^{\tau^{-1}}(d^0)}} &&  \overline{A_2}\otimes_A {^{\tau^{-1}}(P^1)} \ar[d]^-{-(x+y)}\\
\overline{A_2}\otimes_A {^{\sigma^{-1}}(P^1)} \ar[d] \ar[rr]^-{\overline{A_2}\otimes_A {^{\sigma^{-1}}(d^1)}}  && \overline{A_2}\otimes_A P^0   \ar[d]  \ar[rr]^-{\overline{A_2}\otimes_A d^0}  && \overline{A_2}\otimes_A P^1 \ar[d]  \\
B\otimes_A {^{\sigma^{-1}}(P^1)} \ar[rr]^-{B\otimes_A {^{\sigma^{-1}}(d^1)}} && B\otimes_A P^0 \ar[rr]^-{B\otimes_A d^0} && B\otimes_A P^1
}\end{align}

In (\ref{diag:replace}), the columns are short exact sequences. However, since $\omega$ is nonzero in $\overline{A_2}$, the upper two rows are not complexes. Instead, we have
$$\omega=x^2-y^2=(x-y)(x+y).$$
Consider the following $\overline{A_2}$-module homomorphism
$$\overline{A_2}\otimes_A {^{\sigma^{-1}}(P^1)} \xrightarrow{x-y} \overline{A_2}\otimes_A {^{\tau^{-1}}(P^1)},$$
which sends $1\otimes_A {^{\sigma^{-1}}(z)}$ to $(x-y)\otimes_A {^{\tau^{-1}}(z)}$. Together with this homomorphism and mutiplying the two homomorphisms on the top by minus one,  the upper part of (\ref{diag:replace}) becomes a \emph{quasi-bicomplex} in the sense of \cite[Section~3]{Chen10}. We form the `total complex' as follows.
\begin{align}\label{equ:total}
\xymatrix{
\overline{A_2}\otimes_A {^{(\tau\sigma)^{-1}}(P^1)}  \ar[rr] && \overline{A_2}\otimes_A {^{\tau^{-1}}(P^0)} \oplus \overline{A_2}\otimes_A {^{\sigma^{-1}}(P^1)} \ar[dll]\\
\overline{A_2}\otimes_A P^0 \oplus \overline{A_2}\otimes_A {^{\tau^{-1}}(P^1)} \ar[rr] && \overline{A_2}\otimes_A P^1
}
\end{align}
The slanted arrow is given by the following matrix of homomorphisms.
\begin{align}\label{equ:mat}
\begin{pmatrix}
-(x+y)& & \overline{A_2}\otimes_A {^{\sigma^{-1}}}(d^1)\\
-\overline{A_2}\otimes_A {^{\tau^{-1}}}(d^0) && x-y
\end{pmatrix}
\end{align}
Since the columns of (\ref{diag:replace}) are exact, we infer by an easy spectral sequence argument that the total complex (\ref{equ:total}) is quasi-isomorphic to the bottom row of (\ref{diag:replace}); compare \cite[the second paragraph in the proof of Proposition~3.4]{Chen10}. Therefore, the cokernel of (\ref{equ:mat}) is isomorphic to the first syzygy $\Omega_{\overline{A_2}}(B\otimes_{\bar{A}} M)$ of $B\otimes_{\bar{A}} M$.

The key observation is that the homomorphism (\ref{equ:mat}) is identified with  $-{^{{\sigma_2}^{-1}}(D^1)}$. We emphasize the minus sign appearing here. Consequently,
${\rm Cok}^0\circ \Sigma^{-1}\circ {\rm Kr}(P^\bullet)$
is isomorphic to $\Omega_{\overline{A_2}}(B\otimes_{\bar{A}} M)$.

Recall from \cite[Lemma~2.2.2]{Buc} the well-known fact that, in $\mathbf{D}'_{\rm sg}(\overline{A_2})$, we have a natural isomorphism
$$\Omega_{\overline{A_2}}(B\otimes_{\bar{A}} M)\simeq \Sigma^{-1} (B\otimes_{\bar{A}} M).$$
In other words, we obtain a functorial isomorphism
$${\rm Cok}^0 \circ \Sigma^{-1} \circ {\rm Kr} (P^\bullet)  \simeq \Sigma^{-1} (B \otimes_{\bar{A}} M) = \Sigma^{-1} (B\otimes_{\bar{A}} {\rm Cok}^0(P^\bullet)).$$
Since ${\rm Cok}^0$ is a triangle functor, it commutes with $\Sigma^{-1}$. After cancelling the two $\Sigma^{-1}$'s above, we infer  the required commutativity.

For the consequence, we assume that the left global dimension of $A$ is $d+1$. By \cite[Theorem~2.10]{Chen24}, the quotient ring $\bar{A}$ is left $d$-Gorenstein. Consequently, by \cite[Theorem~6.9]{Bel00}, the canonical functor $Q_{\bar{A}}$ induces a triangle equivalence
$$Q_{\bar{A}}\colon \bar{A}\mbox{-}\underline{\rm GProj} \stackrel{\sim}\longrightarrow \mathbf{D}'_{\rm sg}(\bar{A}).$$
Combining it with Theorem~\ref{thm:Eis}, we infer that the composite functor $Q_{\bar{A}}\circ {\rm Cok}^0$ is an equivalence. Similarly, the right vertical arrow in (\ref{diag:comp}) is also an equivalence. Then the required equivalence follows from the one in Theorem~\ref{thm:main}.
\end{proof}

In a commutative case, we show that the restriction of ${\rm Kn}$ to matrix factorizations is dense.

\begin{cor}\label{cor:Orl}
 Suppose that $A$ is a commutative noetherian ring with finite global dimension.  Assume that $\omega$ is regular and that $2$ is invertible in $A$. Then the explicit functor
    $${\rm Kn}\colon \underline{{\mathbf{MF}}}(A; \omega) \stackrel{\sim}\longrightarrow \underline{{\mathbf{MF}}}(A_2; y^2-x^2+\omega)$$
    is dense, and thus a triangle equivalence.
\end{cor}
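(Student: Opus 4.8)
The plan is to prove density of the restricted functor ${\rm Kn}\colon\underline{\mathbf{MF}}(A;\omega)\to\underline{\mathbf{MF}}(A_2;y^2-x^2+\omega)$ (full faithfulness being already given by Theorem~\ref{thm:main}), by transporting everything along the cokernel functors and reducing to the assertion that $B\otimes_{\bar A}-$ induces an \emph{equivalence of the small singularity categories} $\mathbf{D}_{\rm sg}(\bar A)\xrightarrow{\sim}\mathbf{D}_{\rm sg}(\overline{A_2})$, which is the classical statement \cite[Theorem~2.1]{Orl} (or a skew variant of it). First I would set the stage. Since $A$ is commutative noetherian of finite global dimension, so is the skew polynomial ring $A_2=A_1[y;\tau_1]$, and $y^2-x^2+\omega$ is regular in $A_2$; hence, exactly as in the proof of Theorem~\ref{thm:comparison}, both $\bar A=A/(\omega)$ and $\overline{A_2}=A_2/(y^2-x^2+\omega)$ are Iwanaga--Gorenstein, so $\mathbf{D}_{\rm sg}(R)\simeq R\mbox{-}\underline{\rm Gproj}$ and $\mathbf{D}_{\rm sg}(R)\hookrightarrow\mathbf{D}'_{\rm sg}(R)$ is fully faithful, for $R\in\{\bar A,\overline{A_2}\}$ (\cite{Buc} and \cite[Proposition~1.13]{Orl}). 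Moreover, since $A$ (resp.\ $A_2$) has finite global dimension, every finitely generated $\bar A$-module (resp.\ $\overline{A_2}$-module) automatically has finite projective dimension over $A$ (resp.\ $A_2$), so the superscript ${\rm fpd}$ is vacuous on finitely generated objects; consequently ${\rm Cok}^0$ restricts, as in Theorem~\ref{thm:Eis}, to equivalences $\underline{\mathbf{MF}}(A;\omega)\xrightarrow{\sim}\bar A\mbox{-}\underline{\rm Gproj}$ and $\underline{\mathbf{MF}}(A_2;y^2-x^2+\omega)\xrightarrow{\sim}\overline{A_2}\mbox{-}\underline{\rm Gproj}$, and composing with $Q_{\bar A}$, $Q_{\overline{A_2}}$ gives equivalences $Q_{\bar A}\circ{\rm Cok}^0\colon\underline{\mathbf{MF}}(A;\omega)\xrightarrow{\sim}\mathbf{D}_{\rm sg}(\bar A)$ and $Q_{\overline{A_2}}\circ{\rm Cok}^0\colon\underline{\mathbf{MF}}(A_2;y^2-x^2+\omega)\xrightarrow{\sim}\mathbf{D}_{\rm sg}(\overline{A_2})$.

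Next I would check that $B\otimes_{\bar A}-$ behaves well on finitely generated modules. The $\overline{A_2}$-$\bar A$-bimodule $B$ is free as a right $\bar A$-module and, being $\overline{A_2}/(x+y)$, is cyclic as a left $\overline{A_2}$-module; hence $B\otimes_{\bar A}M$ is finitely generated over $\overline{A_2}$ whenever $M$ is finitely generated over $\bar A$. As $B\otimes_{\bar A}-$ is exact and sends finitely generated projective $\bar A$-modules to perfect $\overline{A_2}$-complexes (recall the left $\overline{A_2}$-module $B$ has projective dimension one), it induces a triangle functor $\mathbf{D}_{\rm sg}(\bar A)\to\mathbf{D}_{\rm sg}(\overline{A_2})$, which is the restriction along $\mathbf{D}_{\rm sg}\hookrightarrow\mathbf{D}'_{\rm sg}$ of the functor of Theorem~\ref{thm:comparison}. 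Restricting the commutative square (\ref{diag:comp}) to matrix factorizations --- legitimate, since ${\rm Kn}$ sends matrix factorizations to matrix factorizations by (\ref{equ:Kn}), and all four composites now land in the small singularity categories --- yields a square commuting up to a natural isomorphism, with top row ${\rm Kn}\colon\underline{\mathbf{MF}}(A;\omega)\to\underline{\mathbf{MF}}(A_2;y^2-x^2+\omega)$, bottom row the induced $B\otimes_{\bar A}-\colon\mathbf{D}_{\rm sg}(\bar A)\to\mathbf{D}_{\rm sg}(\overline{A_2})$, and the two equivalences above as vertical arrows. Thus the restricted ${\rm Kn}$ is dense --- equivalently, an equivalence --- if and only if this bottom functor is an equivalence.

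The bottom functor is fully faithful, being the restriction of the fully faithful equivalence of Theorem~\ref{thm:comparison} to the full subcategories $\mathbf{D}_{\rm sg}\hookrightarrow\mathbf{D}'_{\rm sg}$, so only essential surjectivity remains. In the genuinely commutative case $\tau={\rm Id}_A$ this is precisely Orlov's theorem \cite[Theorem~2.1]{Orl} for the hypersurfaces $\bar A$ and $\overline{A_2}$; in general I would use the Orlov-type quasi-inverse --- a tensor functor with an $\bar A$-$\overline{A_2}$-bimodule built from the complementary branch $\overline{A_2}/(x-y)$ of the factorization $\omega=(x-y)(x+y)$ in $\overline{A_2}$, which is finitely generated over $\overline{A_2}$ and of finite projective dimension over $\bar A$ --- which again preserves finite generation and therefore restricts to a triangle functor $\mathbf{D}_{\rm sg}(\overline{A_2})\to\mathbf{D}_{\rm sg}(\bar A)$; being quasi-inverse to $B\otimes_{\bar A}-$ on the big singularity categories, it remains quasi-inverse on the small ones. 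A diagram chase then concludes: given $N\in\underline{\mathbf{MF}}(A_2;y^2-x^2+\omega)$, write $Q_{\overline{A_2}}\circ{\rm Cok}^0(N)\cong B\otimes_{\bar A}M$ with $M\in\mathbf{D}_{\rm sg}(\bar A)$, realize $M\cong Q_{\bar A}\circ{\rm Cok}^0(P)$ for a matrix factorization $P$ over $A$, and use (\ref{diag:comp}) and full faithfulness of $Q_{\overline{A_2}}\circ{\rm Cok}^0$ to obtain ${\rm Kn}(P)\cong N$.

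The step I expect to be the main obstacle is exactly this passage from the big to the small singularity categories. The explicit quasi-inverse $A_1\otimes_{A_2}-$ furnished by Proposition~\ref{prop:inverse} is of no help here, since it turns a matrix factorization over $A_2$ into a genuinely infinite projective-module factorization over $A$ (with components finitely generated only over $A_1$, not over $A$), and so does not preserve finite generation; one must instead produce a coherence-preserving quasi-inverse as above, or --- in those cases where it applies, e.g.\ when $\bar A$ is semiperfect --- argue that $\underline{\mathbf{MF}}(A;\omega)\simeq\mathbf{D}_{\rm sg}(\bar A)$ is idempotent-split and invoke Lemma~\ref{lem:e-uptore} directly on the equivalence up to retracts of Theorem~\ref{thm:main}.
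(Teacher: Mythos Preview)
Your proposal is correct and follows essentially the same approach as the paper: restrict the square (\ref{diag:comp}) to matrix factorizations and small singularity categories, observe that the vertical arrows $Q\circ{\rm Cok}^0$ become equivalences (since $A$ and $A_2$ have finite global dimension, so the ${\rm fpd}$ condition is vacuous and Buchweitz applies), and then read off density of the top row from the affine case of \cite[Theorem~2.1]{Orl} on the bottom. Your extended discussion of the case $\tau\neq{\rm Id}_A$ and of an explicit Orlov-type quasi-inverse goes beyond what the paper does; note that since $A$ is commutative and $\omega$ is regular, the relation $\omega a=\sigma(a)\omega$ forces $\sigma={\rm Id}_A$, and the paper's phrase ``in a commutative case'' together with its direct appeal to Orlov indicates the intended reading is $\tau={\rm Id}_A$, in which case $A_2=A[x,y]$ and $\overline{A_2}$ are genuinely commutative and no skew variant is needed.
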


\begin{proof}
    The commutative diagram (\ref{diag:comp}) restricts to the following one.
       \begin{align*}
    \xymatrix{
\underline{{\mathbf{MF}}}(A; \omega) \ar[d]_-{Q_{\bar{A}}\circ {\rm Cok}^0} \ar[rr]^-{\rm Kn} && \underline{{\mathbf{MF}}}(A_2; y^2-x^2+\omega) \ar[d]^-{Q_{\overline{A_2}}\circ {\rm Cok}^0}\\
    \mathbf{D}_{\rm sg}(\bar{A})  \ar[rr]^-{B\otimes_{\bar{A}}-} &&  \mathbf{D}_{\rm sg}(\overline{A_2})
    }
    \end{align*}
    The last paragraph in the proof of Theorem~\ref{thm:comparison} shows that the two vertical arrows are equivalences. Then the required equivalence follows from the affine case of  \cite[Theorem~2.1]{Orl}. Here, we use implicitly the assumption that $2$ is invertible.
\end{proof}

\vskip 5pt

\noindent {\bf Acknowledgements.}\; The authors are grateful to Professor Shiquan Ruan and Dr. Qiang Dong for helpful discussion. The project is supported by National Key R$\&$D Program of China (No. 2024YFA1013801) and  National Natural Science Foundation of China (No.s 12325101 and  12131015).

\bibliography{}

\vskip 10pt

 {\footnotesize \noindent  Xiao-Wu Chen\\
 School of Mathematical Sciences, University of Science and Technology of China, Hefei 230026, Anhui, PR China}\\

{ \footnotesize \noindent  Wenchao Wu\\
 School of Mathematical Sciences, University of Science and Technology of China, Hefei 230026, Anhui, PR China}

\end{document}